\newtheorem{theorem}{Theorem}[section]
\theoremstyle{plain}
\newtheorem{corollary}[theorem]{Corollary}
\newtheorem{example}{Example}
\newtheorem{lemma}[theorem]{Lemma}
\newtheorem{proposition}[theorem]{Proposition}
\newtheorem{remark}[theorem]{Remark}
\renewcommand\bigskip{\medskip}
\def\to{\rightarrow}
\def\bc{\begin{center}}
\def\ec{\end{center}}
\def\be{\begin{equation}}
\def\ee{\end{equation}}
\def\P{\mathbb P}
\def\N{\mathbb N}
\def\R{\mathbb R}
\def\E{\mathbb E}
\def\pv{P_{\varphi}}
\begin{document}
\title[]{Multifractal analysis of some multiple ergodic averages}
\author{Ai-Hua FAN, J\"{o}rg SCHMELING and Meng WU}
\date{%
}
\address{A. F. Fan: LAMFA, UMR 7352 (ex 6140) CNRS, Universit\'e de Picardie, 33 rue
Saint Leu, 80039 Amiens, France.
E-mail: ai-hua.fan@u-picardie.fr}
\address{J. Scheming: MCMS,
Lund Institute of Technology, Lund University Box 118 SE-221 00
Lund, Sweden.
E-mail: joerg@maths.lth.se}
\address{M. Wu: LAMFA, UMR 7352 (ex 6140) CNRS, Universit\'e de Picardie, 33 rue
Saint Leu, 80039 Amiens, France.
E-mail: meng.wu@u-picardie.fr}

\subjclass{}
\keywords{Multifractal, multiple ergodic average, Hausdorff dimension}
\maketitle

\begin{abstract}
In this paper we study the multiple ergodic averages $$
\frac{1}{n}\sum_{k=1}^n \varphi(x_k, x_{kq}, \cdots, x_{k q^{\ell-1}}),
\qquad (x_n) \in \Sigma_m
$$ on the
symbolic space $\Sigma_m =\{0, 1, \cdots, m-1\}^{\mathbb{N}^*}$ where $m\ge 2, \ell\ge 2, q\ge 2$ are integers. We give
 a complete solution to the problem of multifractal analysis of the limit of the above multiple ergodic averages.
  Actually we develop a non-invariant
and non-linear version of thermodynamic formalism that is of its own interest. We study a large class of measures (called telescopic measures) and the special case of telescopic measures defined by the fixed points of some non-linear transfer operators plays a crucial role in studying our multiplicatively invariant sets. These measures share many properties with Gibbs measures in the classical thermodynamic formalism. Our work also concerns with variational principle, pressure function and Legendre transform in this new setting.
\end{abstract}

\markboth{Multifractal analysis of some multiple ergodic averages}{Ai-Hua FAN, J\"{o}rg SCHMELING and Meng WU}%

\section{Introduction}\label{int}

Let $(X,T)$ be a topological dynamical system where $T$ is a
continuous map on  a compact metric space $X$. F\"{u}rstenberg had
initiated the study of the {\em multiple ergodic average}:
\begin{equation}\label{Furstenberg}
\frac{1}{n}\sum_{k=1}^nf_1(T^kx) f_2(T^{2k}x)\cdots f_s(T^{s k}x)
\end{equation}
where $f_1,\cdots ,f_s$ are $s$ continuous functions on $X$ with
$s\geq 2$ when he gave a proof of the existence of arithmetic sequences of arbitrary length amongst
sets of integers with positive density (\cite{Furstenberg}). Later on, the research of such a kind of average has
attributed  a lot of attentions (see e.g. \cite{Bergelson,Bour,Assani,HK}).

 The authors in \cite{FLM} have recently proposed to analyze such
 multiple ergodic averages from the point of view of
multifractal analysis. They have succeeded in a very special case
where $(X,T)$ is the shift dynamics on symbolic space and $f_1,
\cdots f_s$ are Rademacher functions on the symbolic space viewed
as an additive group. It is a challenge to solve the problem in its generality.

In the present paper, we shall consider the problem for the shift dynamics
and for a class of functions $f_1, \cdots, f_s$. The setting is
as follows. Let $S=\{0,\cdots,m-1\}$ be a set of $m$ symbols \
($m\geq 2$).  Consider the shift map $T$ on the symbolic space
$X=\Sigma_m=S^\N$. Fix two integers $q\geq 2$ and $\ell\ge 2$. For
any given $\ell$ continuous functions $g_1, g_2, \cdots, g_\ell$
defined on $X$,  we consider the multiple ergodic average
$$A_n(g_1,g_{2}\cdots, g_{\ell})(x)=\frac{1}{n}\sum_{k=1}^n
  g_1(T^kx)g_{2}(T^{kq}x)\cdots g_{\ell}(T^{kq^{\ell-1}}x).$$
This is a special case of (\ref{Furstenberg}) with $s=q^{\ell-1}$,
$f_{q^j} =g_{j-1}$ and $f_k =1$ for other $k \not=q^j$. Furthermore
we assume that the functions $f_1,f_{2}\cdots, f_{\ell}$ depend
 only on the first coordinate of $x=(x_k)_{k\ge 0} \in \Sigma_m$.
So, under this assumption of $f_j$'s we  have
\begin{equation}\label{MEA1}
A_n(g_1,g_{2}\cdots,
g_{\ell})(x)=\frac{1}{n}\sum_{k=1}^n
  g_1(x_k)g_{2}(x_{kq})\cdots g_{\ell}(x_{kq^{\ell-1}}).
  \end{equation}
  For the time being, there is no idea for the multifractal analysis of (\ref{Furstenberg})
  in its general form. So we are content with investigating
  the special case (\ref{MEA1}). Actually we can do a little more.
  Given a function $\varphi: S^\ell \to \mathbb{R}$ we shall study
\begin{equation}\label{GMEA}
A_n\varphi(x)=\frac{1}{n}\sum_{k=1}^n
  \varphi(x_k, x_{qk}, \cdots, x_{q^{\ell-1}}).
  \end{equation}
The average in (\ref{MEA1}) corresponds to the special case of (\ref{GMEA}) with $\varphi=g_1\otimes\cdots \otimes g_{\ell}$.
 For $\alpha\in \R$, we define
$$E(\alpha)=\left\{x\in \Sigma_m : \lim_{n\to\infty}A_n\varphi(x)=\alpha\right\}.$$
Our problem is to determine the Hausdorff dimension of $E(\alpha)$.
The problem is classical when $\ell=1$ and the answers are well
known (see e.g. \cite{FFW,FLP,BSS,Barreira}). Let
$$\alpha_{\min}=\min_{a_1,\cdots,a_{\ell}\in
S }\varphi(a_1,\cdots,a_{\ell}), \quad
\alpha_{\max}=\max_{a_1,\cdots,a_{\ell}\in
S}\varphi(a_1,\cdots,a_{\ell}).$$ We assume that
$\alpha_{\min}<\alpha_{\max}$ (otherwise $\varphi$ is constant and
the problem is trivial).

Let $\mathcal{F}(S^{\ell-1}, \mathbb{R}^+)$ be the cone of
functions defined on $S^{\ell-1}$ taking non-negative real values. For any $s \in
\mathbb{R}$, consider the  transfer operator $\mathcal{L}_s$ defined on
$\mathcal{F}(S^{\ell-1}, \mathbb{R}^+)$ by
\begin{equation}\label{transer-operator}
\mathcal{L}_s \psi (a)
= \sum_{j \in S} e^{s \varphi(a, j)}
\psi (Ta, j)
\end{equation}
where $T: \ S^{\ell-1}\to S^{\ell-2}$ is defined by $T(a_1,\cdots,a_{\ell-1})=(a_2,\cdots,a_{\ell-1})$.
We also consider the non-linear operator $\mathcal{N}_s$ on $\mathcal{F}(S^{\ell-1}, \mathbb{R}^+)$ defined by
$$
     \mathcal{N}_s \psi (a)= (\mathcal{L}_s \psi (a))^{1/q}.
$$


We shall
prove that the equation
\begin{equation}\label{transer_equation}
     \mathcal{N}_s \psi_s = \psi_s
\end{equation}
admits a unique strictly positive solution $\psi_s=\psi_s^{(\ell -1)} : S^{\ell-1}\to
\mathbb{R}_+^*$  (see Section \ref{section nonlinear equa}, Theorem \ref{existence-unicity trans-equ}).
The function $\psi_s$ is defined on $S^{\ell-1}$. We  extend it on $S^k$ for all $1\le k \le \ell -2$ by induction:

\begin{equation}\label{transer_equation 2}
\psi_s^{(k)} (a)=\left(\sum_{j \in S} \psi_s^{(k+1)} (a, j)\right)^{\frac{1}{q}}, \ \ (a\in S^{k}).
\end{equation}
For simplicity, we will simply write $\psi_s(a)=\psi_s^{(k)}(a)$ for $a\in S^k$ with $1\leq k\leq \ell-1$. So, $a\mapsto \psi_s(a)$ is not only defined on $S^{\ell-1}$ but on $\bigcup_{1\leq k\leq \ell-1}S^k$.

Then we define the pressure function by
\begin{equation}\label{pressure function}
P_{\varphi}(s) = (q-1)q^{\ell-2} \log \sum_{j\in S}\psi_s(j).
\end{equation}
Throughout this paper, $\log$ means the natural logarithm.

We will prove that $\pv(s)$ is an analytic convex function of $s\in
\mathbb{R}$ and even strictly convex since $\alpha_{\min}
<\alpha_{\max}$. The Legendre transform of $P_{\varphi}$ is defined
as $$ P^*_{\varphi}(\alpha)=\inf_{s\in \mathbb{R}}(-s\alpha+P_{\varphi}(s)).
$$

We denote by $L_{\varphi}$ the set of $\alpha\in \R$ such that
$E(\alpha)\neq \emptyset$. One of the main results of the paper is stated as follows.
    \medskip

\begin{theorem}\label{thm principal}
We have
$$L_{\varphi}=[P'_{\varphi}(-\infty),P'_{\varphi}(+\infty)].$$
 If
$\alpha =P'_{\varphi}(s_\alpha)$ for some $s_\alpha \in
\R\cup\{-\infty,+\infty\}$, then $E(\alpha)\neq \emptyset$ and the
Hausdorff dimension of $E(\alpha)$ is equal to
$$\dim_H E(\alpha)=\frac{P_{\varphi}^*(\alpha)}{q^{\ell-1}\log m}.$$
\end{theorem}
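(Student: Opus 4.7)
My plan is to establish matching upper and lower bounds $\dim_H E(\alpha) = P_\varphi^*(\alpha)/(q^{\ell-1}\log m)$ using a multifractal / thermodynamic formalism organized around the fixed point $\psi_s$ of $\mathcal{N}_s$. The guiding observation is the $q$-adic chain decomposition $\N^* = \bigsqcup_{\gcd(r,q)=1} C_r$ with $C_r = \{r, rq, rq^2, \ldots\}$. Restricted to one chain $C_r$ the integrand $k \mapsto \varphi(x_k, x_{qk}, \ldots, x_{q^{\ell-1}k})$ depends only on the $\ell$-window $(x_{rq^j}, \ldots, x_{rq^{j+\ell-1}})$, so $A_n\varphi$ splits as a sum over $r$ of Birkhoff-like sums for the one-dimensional ``shift along the chain'' with the $\ell$-local observable $\varphi$. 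This explains the non-linear operator $\mathcal{N}_s = \mathcal{L}_s^{1/q}$: advancing one step along $C_r$ skips $q-1$ positions belonging to other chains, and the $q$-th root absorbs the resulting multiplicative factor in a self-consistent way.

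For the upper bound I would use (\ref{transer_equation}) as an exponential-tilting device. For any $s \in \R$ define, on cylinders $[a_1 \cdots a_n]$, the weight $W_n(s;a) = \exp\bigl(s \sum_{q^{\ell-1}k \leq n} \varphi(a_k, a_{qk}, \ldots, a_{q^{\ell-1}k})\bigr)$. Telescoping $W_n(s;a)$ chain-by-chain via $\psi_s(a)^q = \mathcal{L}_s \psi_s(a)$ yields a clean estimate $\sum_{a \in S^n} W_n(s;a) \asymp e^{n P_\varphi(s)}$, with the combinatorial prefactor $(q-1)q^{\ell-2}$ in $P_\varphi$ emerging from counting chains whose first $\ell$ positions lie in $[1,n]$. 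A Chebyshev / covering argument applied to $E(\alpha)$ followed by optimization over $s$ then produces the Legendre bound $\dim_H E(\alpha) \leq P_\varphi^*(\alpha)/(q^{\ell-1}\log m)$; the denominator $q^{\ell-1}\log m$ reflects the fact that cylinders in $\Sigma_m$ have diameter $m^{-n}$.

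For the lower bound I would build a \emph{telescopic Gibbs measure} $\mu_{s_\alpha}$ on $\Sigma_m$: declare the coordinates along distinct chains $C_r$ to be independent, and on each $C_r$ use the Markov-type law whose transitions are dictated by $\psi_{s_\alpha}$ through (\ref{transer_equation})--(\ref{transer_equation 2}), i.e.\ the conditional law of $x_{rq^{j+1}}$ given the previous chain-coordinates is proportional to $e^{s_\alpha \varphi(\cdot)} \psi_{s_\alpha}(\cdot)/\psi_{s_\alpha}(\cdot)^q$. This is the non-invariant, non-linear analogue of a classical Gibbs measure. A chain-by-chain strong law gives $\mu_{s_\alpha}(E(\alpha)) = 1$ whenever $\alpha = P'_\varphi(s_\alpha)$, and evaluating $\log \mu_{s_\alpha}([a_1 \cdots a_n])$ by the same telescoping used above yields the $\mu_{s_\alpha}$-a.s. local dimension $P_\varphi^*(\alpha)/(q^{\ell-1}\log m)$. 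The mass distribution principle then closes the matching lower bound. The description $L_\varphi = [P'_\varphi(-\infty), P'_\varphi(+\infty)]$ and the endpoint cases $s_\alpha = \pm\infty$ follow from convexity and analyticity of $P_\varphi$ together with a limit argument $s_\alpha \to \pm\infty$; outside this interval the bounds $\alpha_{\min} \leq A_n\varphi \leq \alpha_{\max}$ make $E(\alpha)$ trivially empty.

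The main obstacle will be the lower-bound step. Because $\mu_{s_\alpha}$ is not shift-invariant, one cannot import Shannon--McMillan--Breiman or the standard ergodic theorems, and the chain-by-chain convergence must be upgraded to a pointwise statement uniform over the $\sim n/\log n$ chains meeting $[1,n]$. The boundary effect produced by incomplete $\ell$-windows near $k \approx n/q^{\ell-1}$ is precisely what pins down the prefactor $(q-1)q^{\ell-2}$, and making both the strong law and the local-dimension estimate robust enough to control this boundary is what forces the paper to develop a full non-linear thermodynamic formalism rather than to appeal to the classical Gibbs-measure machinery.
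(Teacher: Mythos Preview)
Your overall architecture—the $q$-adic chain decomposition, the telescopic Gibbs measure built from $\psi_s$, the chain-wise law of large numbers for the lower bound, and the limit argument $s\to\pm\infty$ at the endpoints—is exactly the paper's strategy, and your lower-bound paragraph is essentially Sections~2--7 in outline.

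The genuine gap is the upper bound. The partition-function estimate $\sum_{a\in S^n} W_n(s;a)\asymp e^{nP_\varphi(s)}$ is not correct and, more importantly, a single-scale Chebyshev/cover argument cannot reach the Hausdorff bound here. The sum $\sum_a W_n(s;a)$ factorizes over chains and is controlled by the \emph{linear} operator $\mathcal L_s$ (its spectral radius, averaged over chain lengths), not by the nonlinear fixed point $\psi_s$; telescoping via $\psi_s^q=\mathcal L_s\psi_s$ does not collapse $W_n$ because $W_n$ carries no $\psi_s$-corrections. The Chebyshev count of ``good'' level-$n$ cylinders therefore yields only the box-dimension pressure, and for these multiplicatively invariant level sets box and Hausdorff dimensions genuinely differ (already for the prototype $X_2$ of \cite{FLM,KPS}). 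The paper's fix is to use the \emph{same} telescopic measure $\mathbb P_{\mu_s}$ for the upper bound: the Gibbs identity (Proposition~\ref{prop basic formula loc dim}) gives
\[
\log\mathbb P_{\mu_s}([x_1^n])=s\!\!\sum_{j\le n/q^{\ell-1}}\!\!\varphi(\cdots)-(n-\lfloor n/q\rfloor)q\log\psi_s(\emptyset)+B_n(x)-qB_{n/q}(x),
\]
and the elementary fact $\liminf_n\bigl(a_{\lfloor n/q\rfloor}-a_n\bigr)\le 0$ for any bounded sequence disposes of the boundary term along a subsequence. This yields $\underline D(\mathbb P_{\mu_s},x)\le (P_\varphi(s)-\alpha s)/(q^{\ell-1}\log m)$ for \emph{every} $x\in E(\alpha)$, and then Billingsley's lemma---which implicitly allows covers by cylinders of varying depth---gives the sharp Hausdorff upper bound. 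In short, the tilt $\psi_s$ must enter through the measure and Billingsley, not through a raw moment bound.

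One smaller correction: the bounds $\alpha_{\min}\le A_n\varphi\le\alpha_{\max}$ do \emph{not} explain why $E(\alpha)=\emptyset$ outside $[P_\varphi'(-\infty),P_\varphi'(+\infty)]$, since that interval can be strictly smaller than $[\alpha_{\min},\alpha_{\max}]$ (Theorem~\ref{critere cercle}, Example~3). The emptiness follows instead from the same local-dimension estimate: for such $\alpha$ one has $P_\varphi^*(\alpha)=-\infty$, so $\underline D(\mathbb P_{\mu_s},x)<0$ for suitable $s$, contradicting that $\mathbb P_{\mu_s}$ is a probability (Proposition~\ref{range}).
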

\medskip
This result was announced for $\ell=2$ in \cite{FSW}.
It is obvious that $L_{\varphi}\subset[\alpha_{\min},\alpha_{\max}]$. In general, this inclusion is strict. In fact, we have the following criterion for $L_{\varphi}=[\alpha_{\min},\alpha_{\max}]$.
\begin{theorem}\label{critere cercle}
We have the equality $$P'_{\varphi}(-\infty)=\alpha_{\min}$$ if and only if there exist an $x=(x_i)_{i=1}^\infty\in \Sigma_m$ such that $$\forall k\geq 1,\  \varphi(x_k,x_{k+1},\cdots,x_{k+\ell-1})=\alpha_{\min}.$$ We have analogue criterion for
$P'_{\varphi}(+\infty)=\alpha_{\max}.$
\end{theorem}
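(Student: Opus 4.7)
The plan is to reformulate the statement via Theorem~\ref{thm principal} and then handle the two implications separately. Since every summand in $A_n\varphi(x)$ lies in $[\alpha_{\min},\alpha_{\max}]$, we have $L_\varphi\subseteq[\alpha_{\min},\alpha_{\max}]$; combined with $L_\varphi=[P'_\varphi(-\infty),P'_\varphi(+\infty)]$, this shows that $P'_\varphi(-\infty)=\alpha_{\min}$ is equivalent to $\alpha_{\min}\in L_\varphi$, i.e.\ to $E(\alpha_{\min})\neq\emptyset$. So it suffices to prove that $E(\alpha_{\min})\neq\emptyset$ if and only if there exists $y\in\Sigma_m$ with $\varphi(y_k,\ldots,y_{k+\ell-1})=\alpha_{\min}$ for every $k\geq 1$. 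The analogous equivalence for $P'_\varphi(+\infty)=\alpha_{\max}$ will follow by applying the same argument to $-\varphi$.

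For the ``if'' direction, I would use the canonical partition of $\N^*$ into the multiplicative chains $C_r=\{rq^{j-1}:j\geq 1\}$ indexed by $r\geq 1$ with $q\nmid r$. Given such a $y$, define $x\in\Sigma_m$ by $x_{rq^{j-1}}:=y_j$. For $k=rq^{j-1}$ and $0\leq i\leq\ell-1$ one has $x_{kq^i}=y_{j+i}$, so $\varphi(x_k,x_{kq},\ldots,x_{kq^{\ell-1}})=\varphi(y_j,\ldots,y_{j+\ell-1})=\alpha_{\min}$; hence $A_n\varphi(x)\equiv\alpha_{\min}$ and $x\in E(\alpha_{\min})$.

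For the ``only if'' direction I would argue by contraposition. Assume no such $y$ exists; set $A:=\{a\in S^\ell:\varphi(a)=\alpha_{\min}\}$ and consider the directed graph $G$ on vertex set $S^{\ell-1}$ with an edge $(a_1,\ldots,a_{\ell-1})\to(a_2,\ldots,a_\ell)$ whenever $(a_1,\ldots,a_\ell)\in A$. The nonexistence of $y$ is equivalent to $G$ admitting no infinite walk, so $G$ is acyclic and every walk visits at most $M:=m^{\ell-1}$ vertices. For any $x\in\Sigma_m$ and any chain $C_r$, the subsequence $z^{(r)}_j:=x_{rq^{j-1}}$ has the property that the index $k=rq^{j-1}$ is \emph{good} (meaning the $k$-th summand equals $\alpha_{\min}$) precisely when $(z^{(r)}_j,\ldots,z^{(r)}_{j+\ell-1})\in A$, which is exactly the data of an edge of $G$. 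A maximal run of $R$ consecutive good indices in $C_r$ therefore produces a walk of $R$ edges in the acyclic graph $G$, forcing $R\leq M-1$; a pigeonhole then yields $B^{(r)}(n)\geq(N^{(r)}(n)-M+1)/M$ whenever $N^{(r)}(n):=|C_r\cap[1,n]|\geq M$.

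The contradiction comes from summing over chains. Since $N^{(r)}(n)=k$ exactly when $r\in(n/q^k,n/q^{k-1}]$ with $q\nmid r$, there are $\sim n(q-1)^2/q^{k+1}$ such chains for each $k\leq\log_q n$, and therefore
\begin{equation*}
|B_n|\ \geq\ \sum_{k\geq M}\frac{n(q-1)^2}{q^{k+1}}\cdot\frac{k-M+1}{M}\ =\ \frac{n}{M\,q^{M-1}}(1+o(1)),
\end{equation*}
so $\liminf_n|B_n|/n>0$. On the other hand, $x\in E(\alpha_{\min})$ combined with the finiteness of $\varphi(S^\ell)$ forces $|B_n|/n\to 0$: letting $\beta:=\min\{\varphi(a):\varphi(a)>\alpha_{\min}\}$, one has $A_n\varphi(x)\geq\alpha_{\min}+(\beta-\alpha_{\min})|B_n|/n$, and passing to the limit yields $|B_n|/n\to 0$. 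This supplies the required $y$. I expect the main obstacle to be precisely this density estimate: $\N^*$ is dominated in cardinality by short chains on which the acyclicity of $G$ imposes no constraint, so one must quantitatively isolate the contribution of the comparatively few long chains and check that the geometric decay $q^{-k}$ of their count is outweighed by the linear-in-$k$ lower bound on the bad indices they are forced to contain.
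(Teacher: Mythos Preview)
Your proof is correct. The ``if'' direction coincides with the paper's: both construct the same $x$ along the multiplicative chains $\{rq^{j-1}\}$ and then use $L_\varphi\subset[P'_\varphi(-\infty),P'_\varphi(+\infty)]$ (Proposition~\ref{range}) together with $\alpha_{\min}\le P'_\varphi(-\infty)$.

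For the ``only if'' direction you take a genuinely different route. Both arguments start from the same combinatorial observation: if no infinite $\alpha_{\min}$-word exists, then runs of consecutive $\alpha_{\min}$-blocks have bounded length (your acyclicity of $G$ on $S^{\ell-1}$; the paper's pigeonhole on $S^\ell$ leading to inequality~(\ref{equ prop extrema1})). From there the paper feeds this bound into the Ruelle formula (Proposition~\ref{prop 1}) to obtain $P'_\varphi(s)\ge\alpha_{\min}+\epsilon$ directly, going through the measures $\mu_s$ and the thermodynamic formalism. You instead push the combinatorial bound through the chain decomposition of $\{1,\dots,n\}$ to show $E(\alpha_{\min})=\emptyset$ outright, and then invoke Theorem~\ref{thm principal} to translate this into $P'_\varphi(-\infty)>\alpha_{\min}$. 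Your argument is more elementary in that it avoids the Ruelle formula and the Gibbs-type measures entirely; the price is that it relies on the endpoint case of Theorem~\ref{thm principal} (specifically $E(P'_\varphi(-\infty))\neq\emptyset$, proved in Section~7.7). Since the paper's proof of that endpoint statement does not use Theorem~\ref{critere cercle}, there is no circularity, but you should be explicit that only the inclusion $\{P'_\varphi(-\infty)\}\subset L_\varphi$ is needed, and that this is established independently.
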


Let us look at the definition of
$$A_n\varphi(x) =\frac{1}{n}\sum_{k=1}^n
  \varphi(x_k, x_{kq}, \cdots, x_{k q^{\ell-1}}).
 $$
 One of the key points in our study of the problem is the observation that the coordinates $x_1,\cdots , x_n,\cdots $ of $x$ appearing in the definition of $A_n\varphi(x)$ share the following independence. This observation was first exploited in \cite{FLM} in order to compute the Box dimension of some subset of $E(\alpha_{\min})$. Consider the following partition of $\N^*$:
 $$
 \N^*=\bigsqcup_{i\geq 1,q\nmid i}\Lambda_i\ \ {\rm with}\ \Lambda_i=\{iq^j\}_{j\ge 0}.
 $$
 Observe that if $k=iq^j$ with $q\nmid i$, then $\varphi(x_k,x_{kq},\cdots ,x_{kq^{\ell-1}})$ depends only on $x_{|_{\Lambda_i}}$, the restriction of $x$ on $\Lambda_i$. So the summands in the definition of $A_n\varphi(x)$ can be put into different groups, each of which depends on one restriction $x_{|_{\Lambda_i}}$. For this reason, we decompose $\Sigma_m$ as follows:
 $$
 \Sigma_m=\prod_{i\geq1,q\nmid i}S^{\Lambda_i}.
 $$

 Let $\mu$ be a probability measure on $\Sigma_m$. Notice that $S^{\Lambda_i}$ is nothing but a copy of $\Sigma_m$.
 We consider $\mu$ as a measure on $S^{\Lambda_i}$ for every $i$ with $q\nmid i$. Then we define the infinite product measure $\P_\mu$ on   $\prod_{i\geq1,q\nmid i}S^{\Lambda_i}$ of the copies of $\mu$. More precisely,  for any word $u$ of length $n$ we define
 $$
 \P_{\mu}([u])=\prod_{i\leq n,q\nmid i}\mu([u_{|_{\Lambda_i}}]),
 $$
 where $[u]$ denotes the cylinder of all sequences starting with $u$. Then $\P_{\mu}$ is a probability measure on $\Sigma_m$ and we call it a {\em telescopic product measure}.  Kenyon, Peres and Solomyak \cite{KPS,KPS1} used this kind of measures to compute the Hausdorff dimension of sets like $\{x=(x_n)_{n\ge 1}\in \Sigma_2: \forall k\ge 1, x_kx_{2k}=0\}$
 which was proposed in \cite{FLM}.
 \medskip

 A class of measures $\P_\mu$ will play the same role as Gibbs measures played in the study of simple ergodic  averages ($\ell=1$).
 Concerning the dimension of $\P_\mu$ (see \cite{Fan1994} for the dimension of a measure), we have the following result
 which is one of the main ingredients of the proof of the main result (Theorem \ref{thm principal}) and which has its own interest. A measure $\nu$ on $\Sigma_m$ is said to be exact if there exists an $\alpha\in \R$ such that
 $$ \lim_{n\to \infty}\frac{\log_m\nu ([x_{|_n}])}{n}=\alpha,\ \nu{\rm -a. e.}$$ This value $\alpha$ is the dimension of $\nu$.

 \begin{theorem}\label{dim-meas} For any given measure $\mu$,
the telescopic product measure $\mathbb{P}_\mu$ is exact and its dimension is equal to
  $$\dim_{H}\P_{\mu}=\frac{(q-1)^2}{\log m}\sum_{k=1}^{\infty}\frac{H_k(\mu)}{q^{k+1}}$$
  where
  $$H_k(\mu)=-\sum_{a_1,\cdots,a_k\in S}\mu([a_1\cdots a_k])\log \mu([a_1\cdots a_k]).
  $$
\end{theorem}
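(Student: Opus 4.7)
The product structure $\Sigma_m=\prod_{q\nmid i}S^{\Lambda_i}$ makes $\P_\mu$ an infinite product of copies of $\mu$, so under $\P_\mu$ the restrictions $\{x|_{\Lambda_i}\}_{q\nmid i}$ are independent, each distributed as $\mu$. Taking logarithms of the defining product formula yields
$$-\log\P_\mu([x|_n])=\sum_{\substack{1\leq i\leq n\\ q\nmid i}}Z_i(n),\qquad Z_i(n):=-\log\mu\bigl([x|_{\Lambda_i\cap[1,n]}]\bigr),$$
with the $Z_i(n)$ independent. Grouping by the block length $k_i(n):=|\Lambda_i\cap[1,n]|$, the index set $I_k(n):=\{i:q\nmid i,\ n/q^k<i\leq n/q^{k-1}\}$ has cardinality $N_k(n)=(q-1)^2\,n/q^{k+1}+O(1)$, and for $i\in I_k(n)$ the variable $Z_i(n)$ is distributed as $-\log\mu([\omega_1\cdots\omega_k])$ with $\omega\sim\mu$, so $\E[Z_i(n)]=H_k(\mu)$. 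Since $H_k(\mu)\leq k\log m$, the series $\alpha:=\sum_{k\geq 1}(q-1)^2 H_k(\mu)/q^{k+1}$ converges, and dominated convergence gives $n^{-1}\,\E[-\log\P_\mu([x|_n])]\to\alpha$.

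The heart of the argument is to upgrade this to $\P_\mu$-almost-sure convergence. Fix a truncation level $K$ and decompose the sum as $S_K(n)+T_K(n)$ with $S_K(n)=\sum_{k\leq K}\sum_{i\in I_k(n)}Z_i(n)$. For each fixed $k\leq K$, $\{Z_i(n)\}_{i\in I_k(n)}$ is a sliding window of i.i.d.\ copies of the fixed random variable $-\log\mu([\omega_1\cdots\omega_k])$ of mean $H_k(\mu)$; since $N_k(n)/n\to(q-1)^2/q^{k+1}$ is a nontrivial proportion, the classical strong law of large numbers gives $N_k(n)^{-1}\sum_{i\in I_k(n)}Z_i(n)\to H_k(\mu)$ a.s., whence $S_K(n)/n\to\sum_{k\leq K}(q-1)^2 H_k(\mu)/q^{k+1}$ a.s. For the tail $T_K(n)$, the independence of the $Z_i(n)$ yields a variance bound growing linearly in $n$ (using that $\mathrm{Var}(Z_i^{(k)})$ grows at most polynomially in $k$, so that $\sum_k N_k(n)\mathrm{Var}(Z_i^{(k)})=O(n)$), and Chebyshev plus Borel--Cantelli along a sub-geometric subsequence $n_\ell$, together with a standard maximal-inequality interpolation to all $n$, yields $(T_K(n)-\E T_K(n))/n\to 0$ a.s. Since $\E[T_K(n)]/n$ is bounded uniformly in $n$ by a quantity $\varepsilon_K\to 0$ as $K\to\infty$, letting $K\to\infty$ along a countable sequence yields $-\log\P_\mu([x|_n])/n\to\alpha$ $\P_\mu$-a.s.

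This proves $\P_\mu$ is exact; dividing by $\log m$ gives the announced dimension formula. The main technical obstacle is the triangular-array nature of the problem: the summands $Z_i(n)$ themselves depend on $n$ through the block length $k_i(n)$, so the classical strong law cannot be applied directly to the full sum, and the tail in $k$ must be controlled by variance estimates that are uniform in $n$ before a Borel--Cantelli argument can close the loop.
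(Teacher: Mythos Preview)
Your approach is correct but differs from the paper's. Rather than grouping by block length and truncating, the paper telescopes each $-\log\mu([x|_{\Lambda_i(n)}])$ into conditional increments, rewrites $-\log\P_\mu([x|_n])=\sum_{k=1}^n F_k(x|_{\Lambda_{i(k)}})$ with $F_{iq^j}(x)=-\log\bigl(\mu([x_0\cdots x_j])/\mu([x_0\cdots x_{j-1}])\bigr)$, and then invokes its general law of large numbers (Theorem~\ref{LLN}) for such sums, itself proved by a second-moment argument along the subsequence $n^2$. Your decomposition into head $S_K$ and tail $T_K$ is more elementary in that the head is handled by the classical strong law for i.i.d.\ variables over a sliding window of linear size, with no custom LLN required.

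Both routes, however, hinge on the same nontrivial estimate that you assert without proof: the second moment of $-\log\mu([\omega_1\cdots\omega_k])$ under $\omega\sim\mu$ is $O(k^2)$, equivalently $\sum_{|a|=k}\mu([a])(\log\mu([a]))^2\leq Ck^2$. Since there are $m^k$ summands and $x(\log x)^2$ is merely bounded on $[0,1]$, the crude bound is exponential in $k$; the polynomial bound is the content of the paper's Lemma~\ref{lemma loc dim}. You should supply a proof---for instance, split the sum according to whether $\mu([a])\geq m^{-2k}$ (then $|\log\mu([a])|\leq 2k\log m$) or $\mu([a])<m^{-2k}$ (then $\mu([a])(\log\mu([a]))^2\leq m^{-2k}(2k\log m)^2$, and summing over at most $m^k$ such $a$ gives $o(1)$).

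One minor observation: once you have $\sum_k N_k(n)\,\mathrm{Var}(Z_i^{(k)})=O(n)$, the truncation is actually unnecessary. The full sum $-\log\P_\mu([x|_n])$ already has variance $O(n)$, is nondecreasing in $n$, and its normalized expectation converges to $\alpha$; Chebyshev along $n_\ell=\ell^2$ plus monotone sandwiching between $n_\ell$ and $n_{\ell+1}$ gives the almost-sure limit directly, without splitting into $S_K+T_K$.
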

A similar formula for some special $\P_{\mu}$ has appeared in \cite{KPS1}.
Another ingredient of the proof of Theorem \ref{thm principal} is a law of large numbers relative to the probability
$\P_\mu$.
We consider $(\prod_{i\geq1,q\nmid i}S^{\Lambda_i},\P_\mu)$ as a probability space $(\Omega,\P_\mu)$. Let $(F_k)_{k\ge 1}$ be a sequence of functions defined on $\Sigma_m$. For each $k$, there exists a unique integer $i(k)$ such that $k=i(k)q^j$ and $q\nmid i$. Then
$$
x \mapsto F_k(x_{|_{\Lambda_{i(k)}}})
$$
defines a random variable on $\Omega$. Concerning the sequence of random variables $\left\{F_k(x_{|_{\Lambda_{i(k)}}})\right\}$, we have the following law of large numbers.

\begin{theorem}\label{LLN}
Let $(F_k)_{k\ge 1}$ be a sequence of functions defined on $\Sigma_m$. Suppose that there exist \  $C>0$ and $0<\eta<q^{3/2}$ such that for any $i\geq 1$ with $q\nmid i$, any $j_1,j_2\in \N$, we have
\begin{equation}\label{condition2}
{\rm cov}_{\mu}\left(F_{iq^{j_1}}(x), F_{iq^{j_2}}(x)\right)\leq C\eta^{\frac{j_1+j_2}{2}}.
\end{equation}
Then for  $\P_{\mu}-$a.e. $x\in \Sigma_{m}$
$$\lim_{n\to\infty}\frac{1}{n}\sum_{k=1}^{n}\left(F_k(x_{|_{\Lambda_{i(k)}}})-\E_{\mu}F_k(x)\right)=0. $$
\end{theorem}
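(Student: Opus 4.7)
The strategy is a classical second-moment argument. Set $Y_k := F_k(x|_{\Lambda_{i(k)}}) - \E_\mu F_k$ and $S_n := \sum_{k=1}^n Y_k$. I would first bound the variance ${\rm Var}_{\P_\mu}(S_n)$ by a power $C n^\beta$ with $\beta < 2$, and then upgrade $L^2$-convergence to almost-sure convergence via Borel--Cantelli along a subsequence plus a maximal inequality to fill the gaps.

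The decisive structural input is that under the product measure $\P_\mu$ the blocks $\{x|_{\Lambda_i} : q\nmid i\}$ are independent, so ${\rm cov}(Y_{k_1},Y_{k_2})=0$ whenever $i(k_1)\neq i(k_2)$. Grouping the sum by $i$ gives
\[S_n=\sum_{q\nmid i,\,i\leq n}T_i(n),\qquad T_i(n):=\sum_{j:\,iq^j\leq n}Y_{iq^j},\]
and the $T_i(n)$ are $\P_\mu$-independent across $i$, hence ${\rm Var}(S_n)=\sum_i{\rm Var}(T_i(n))$. The hypothesis yields
\[{\rm Var}(T_i(n))\leq C\sum_{j_1,j_2=0}^{J_i}\eta^{(j_1+j_2)/2}=C\Bigl(\sum_{j=0}^{J_i}\eta^{j/2}\Bigr)^{2},\quad J_i:=\lfloor\log_q(n/i)\rfloor.\]
Since $\eta<q^{3/2}$, i.e.\ $\log_q\eta<3/2$, summing the geometric series in $j$ and then aggregating over $i\leq n$ produces ${\rm Var}(S_n)\leq C n^{\beta}$ with $\beta:=\max(1,\log_q\eta)<3/2$ (with a harmless logarithmic factor in the boundary cases).

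Next, take a geometric subsequence $n_\ell:=\lfloor\lambda^\ell\rfloor$ for some $\lambda>1$. Chebyshev's inequality gives $\P_\mu(|S_{n_\ell}|/n_\ell>\vep)\leq C\vep^{-2}n_\ell^{\beta-2}$, which is a summable geometric series in $\ell$ since $\beta<2$; Borel--Cantelli then yields $S_{n_\ell}/n_\ell\to 0$ $\P_\mu$-a.s.

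The main obstacle is the control of the fluctuations inside each gap $(n_\ell,n_{\ell+1}]$: one has to show $n_\ell^{-1}\max_{n_\ell<n\leq n_{\ell+1}}|S_n-S_{n_\ell}|\to 0$ $\P_\mu$-a.s. For this I would apply a Rademacher--Menshov / G\'al--Koksma type maximal inequality, dyadically subdividing each block $(n_\ell,n_{\ell+1}]$ into $O(\log n_\ell)$ sub-intervals and estimating the variance of each sub-interval by the calculation above, so as to obtain
\[\E_\mu\Bigl[\max_{n_\ell<n\leq n_{\ell+1}}|S_n-S_{n_\ell}|^2\Bigr]\leq C(\log n_\ell)^2{\rm Var}(S_{n_{\ell+1}}-S_{n_\ell})\leq C'(\log n_\ell)^2 n_\ell^\beta.\]
Dividing by $n_\ell^2$ produces a summable series in $\ell$, and a second Borel--Cantelli closes the gap. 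The delicate point here is that the $Y_k$ are only uncorrelated across different classes $i(k)$ and not within a class; what makes the maximal inequality still applicable is that the intra-class correlations have already been fully absorbed into the variance estimate of the second step, so the Rademacher--Menshov machinery can be run on the outer sum $\sum_i T_i(\cdot)$ of independent $i$-blocks.
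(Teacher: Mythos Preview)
Your variance computation via the block decomposition $S_n=\sum_i T_i(n)$ is exactly what the paper does, and the resulting bound $\mathrm{Var}(S_n)=O(n^\beta)$ with $\beta=\max(1,\log_q\eta)<3/2$ is correct.

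Where you diverge from the paper is in the passage from $L^2$ to almost-sure convergence. The paper does \emph{not} use a geometric subsequence plus a maximal inequality; it takes the quadratic subsequence $n_\ell=\ell^2$. Along $\ell^2$, summability of $\E|S_{\ell^2}/\ell^2|^2$ is immediate from $\beta<3/2$. For the gaps $[\ell^2,(\ell+1)^2]$ the paper observes (its Proposition~2.4) that since $(\ell+1)^2/\ell^2\le 4$ is bounded, each $\Lambda_i$ meets the gap in at most $1+\log_q 4$ indices; this makes the within-block correlations harmless, and a second direct $L^2$ estimate on the gap sum (no maximal inequality needed) gives $\sum_\ell \E|Z_\ell|^2<\infty$ with $Z_\ell=\ell^{-2}\sum_{\ell^2<k\le(\ell+1)^2}Y_k$. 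This is more elementary than Rademacher--Menshov.

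Your route also works, but the justification you give is not quite right. You cannot ``run Rademacher--Menshov on the outer sum $\sum_i T_i(\cdot)$'': the $T_i$ depend on the endpoint $n$, so they are not fixed orthogonal summands. What you actually need is the M\'oricz form of the maximal inequality, which applies whenever the second-moment bound $g(a,b)\ge\E\bigl|\sum_{a<k\le b}Y_k\bigr|^2$ is \emph{superadditive}: $g(a,b)+g(b,c)\le g(a,c)$. Here one should take $g(a,b):=\sum_i C\bigl(\sum_{j:\,iq^j\in(a,b]}\eta^{j/2}\bigr)^2$; superadditivity then follows from $(x+y)^2\ge x^2+y^2$ for $x,y\ge 0$, and your claimed bound $\E\max_n|S_n-S_{n_\ell}|^2\le C(\log n_\ell)^2 g(n_\ell,n_{\ell+1})$ is justified. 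Once this is made explicit your argument is complete; the paper's $n^2$-subsequence simply avoids the maximal-inequality machinery altogether.
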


We observe that the set $E(\alpha)$ is not invariant. So it is not a standard set studied
from the classical dynamical system point of view. Actually, as we shall see, in general the dimension of the set $E(\alpha)$  can not be
described by invariant measures supported on  it. This is confirmed by the following result.
\medskip

Given two real valued functions $f_1$ and $f_2$ defined on $\Sigma_m$. For $\alpha\in \mathbb{R}$, let $E(\alpha)$ be
the set of all points $x$ such that
$$
    \lim_{n \to \infty}\frac{1}{n}\sum_{k=1}^n f_1(T^k x) f_2(T^{2k}x) = \alpha.
$$
We describe the size of the invariant part of $E(\alpha)$ by
$$
    F_{\rm inv}(\alpha) = \sup \left\{\dim \mu: \mu \ \mbox{\rm ergodic}, \mu(E(\alpha))=1\ \right\}.
$$

\begin{theorem}\label{invariant} Let $f_1$ and $f_2$ be two H\"{o}lder continuous functions on $\Sigma_m$.
If $E(\alpha)$ supports an ergodic measure, then
$$
F_{\rm inv}(\alpha)
= \sup \left\{\dim \mu: \mu \ \mbox{\rm ergodic}, \int f_1 d\mu \int f_2 d\mu = \alpha \ \right\}.
$$
    \end{theorem}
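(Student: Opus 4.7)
The plan is to establish two matching inequalities. First, every ergodic measure $\mu$ giving full mass to $E(\alpha)$ automatically satisfies the product constraint $\int f_1\,d\mu\cdot\int f_2\,d\mu=\alpha$, so it contributes to the right-hand supremum. Second, every ergodic $\mu$ satisfying this constraint can be replaced by a H\"older Gibbs measure $\nu$ with at least the same dimension that actually charges $E(\alpha)$, so it is dominated by $F_{\mathrm{inv}}(\alpha)$.

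For the easy inclusion $F_{\mathrm{inv}}(\alpha)\leq\sup\{\dim\mu:\mu\text{ ergodic},\,\int f_1\,d\mu\int f_2\,d\mu=\alpha\}$, let $\mu$ be ergodic with $\mu(E(\alpha))=1$. Because $f_1,f_2$ are bounded, the sequence $A_n(x)=\frac1n\sum_{k=1}^n f_1(T^kx)f_2(T^{2k}x)$ converges boundedly to $\alpha$, hence in $L^1(\mu)$. Using $T$-invariance, $\int f_1(T^kx)f_2(T^{2k}x)\,d\mu=\int f_1(x)f_2(T^kx)\,d\mu$ for every $k$, whence
$$
\alpha=\lim_{n\to\infty}\int f_1(x)\,\Bigl(\frac1n\sum_{k=1}^n f_2(T^kx)\Bigr)\,d\mu(x)=\int f_1\,d\mu\cdot\int f_2\,d\mu,
$$
the last equality being Birkhoff's ergodic theorem in $L^1(\mu)$ combined with the boundedness of $f_1$.

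For the reverse inequality, fix an ergodic $\mu$ with $a:=\int f_1\,d\mu$, $b:=\int f_2\,d\mu$ and $ab=\alpha$. Using classical thermodynamic formalism for H\"older potentials on the full shift, I would find $(s,t)\in\R^2$ and the associated Gibbs equilibrium state $\nu$ for $sf_1+tf_2$ such that $\int f_1\,d\nu=a$ and $\int f_2\,d\nu=b$; the constrained variational principle then gives $h(\nu)\geq h(\mu)$, hence $\dim\nu\geq\dim\mu$. Since $\nu$ is a H\"older Gibbs measure, it enjoys exponential decay of correlations, which yields
$$
\E_\nu A_n\longrightarrow\int f_1\,d\nu\cdot\int f_2\,d\nu=\alpha\qquad\text{and}\qquad\mathrm{Var}_\nu(A_n)=O(1/n).
$$
Passing to the subsequence $n_j=j^2$ one obtains $\nu$-a.e.\ convergence along $(n_j)$ by Borel--Cantelli, and a standard maximal inequality promotes this to $\nu$-a.e.\ convergence along all $n$. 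Hence $\nu(E(\alpha))=1$, and since $\nu$ is ergodic one concludes $\dim\mu\leq\dim\nu\leq F_{\mathrm{inv}}(\alpha)$.

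The main obstacle I expect is the existence step: prescribing both means $(a,b)$ by a H\"older Gibbs measure. When $(a,b)$ lies in the interior of the range $\{(\int f_1\,d\eta,\int f_2\,d\eta):\eta\in\mathcal{M}_T(\Sigma_m)\}$, the pair $(s,t)$ is produced by the Legendre transform of the smooth, strictly convex two-variable classical pressure $P(sf_1+tf_2)$; on the boundary one approximates $(a,b)$ by interior pairs, passes to a weak-$\ast$ limit of the associated Gibbs measures, and invokes upper semicontinuity of entropy together with ergodic decomposition to recover an ergodic measure with the prescribed means and at least the required entropy. The H\"older hypothesis on $f_1,f_2$ is essential both for the Gibbs property of $\nu$ and for the covariance decay supporting the pointwise convergence of $A_n$ under $\nu$.
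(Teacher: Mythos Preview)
Your argument is correct and follows the same two-step strategy as the paper: first show that any ergodic $\mu$ with $\mu(E(\alpha))=1$ must satisfy $\int f_1\,d\mu\int f_2\,d\mu=\alpha$ (your computation here is identical to the paper's), then replace an ergodic measure satisfying the product constraint by a Gibbs measure of at least the same dimension that actually sits on $E(\alpha)$.

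The one substantive difference is in how $\nu(E(\alpha))=1$ is obtained for the Gibbs measure. The paper simply says that the mixing property of $\nu$ gives $M_{f_1,f_2}(x)=\int f_1\,d\nu\int f_2\,d\nu$ $\nu$-a.e.; this implicitly invokes Bourgain's double recurrence theorem for the a.e.\ existence of the limit, after which the triviality of the Kronecker factor for a mixing measure identifies the value. Your route via $\mathrm{Var}_\nu(A_n)=O(1/n)$ and Borel--Cantelli along $n=j^2$ is more hands-on and avoids Bourgain's deep theorem, which is a genuine gain in elementarity. Be aware, though, that the $O(1/n)$ variance bound is not an immediate consequence of two-point correlation decay: the summands $f_1\circ T^j\cdot f_2\circ T^{2j}$ and $f_1\circ T^k\cdot f_2\circ T^{2k}$ have overlapping ``supports'' when $j<k\le 2j$, and one needs the multiple ($\psi$-) mixing of Gibbs measures together with a centering $f_i\mapsto f_i-\int f_i\,d\nu$ to make the double sum of covariances $O(n)$. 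Also, the passage from the subsequence $n_j=j^2$ to all $n$ follows by the trivial bound $|nA_n-j^2A_{j^2}|\le (n-j^2)\|f_1\|_\infty\|f_2\|_\infty$, not a maximal inequality. Your caveat about prescribing both means $(a,b)$ by a Gibbs state when $(a,b)$ lies on the boundary of the rotation set is well placed; the paper glosses over this by citing ``standard higher-dimensional multifractal analysis'' for the fact that the right-hand supremum is attained by a Gibbs measure.
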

It is interesting to compare this result with the level sets of $V$-statistics studied in \cite{FSW_V}.
We return to the above theorem. A remarkable corollary is that when $f_1=f_2$, we must have $\alpha\ge 0$
if $E(\alpha)$ supports an ergodic measure, or even an invariant measure (using Jacobs' entropy decomposition).
Therefore, it is possible that for some $\alpha<0$, $E(\alpha)$ has strictly positive Hausdorff dimension but it doesn't carry  any invariant measure.

    \medskip
The paper is organized as follows. In Section 2, we first construct a class of measures, called telescopic product measures, part of which will play the same role
as Gibbs measures played in the classical theory. This construction is inspired by Kenyon-Peres-Solomyak \cite{KPS} (also see \cite{KPS1}). Then we establish a law of large numbers relative to such a telescopic product measure.
Telescopic product measures constitute a new object of study. In Section 3, we prove that any telescopic product measure is exact and we obtain
a formula for its dimension. In Section 4, we study a non--linear transfer operator and we prove the existence and the uniqueness of its positive solution.
We also prove the analyticity and the convexity of the solution as a function of its parameter $s$.  Each solution defines a Markov measure associated to which is a telescopic product measure. The last measure plays the role of a Gibbs measure in our study of $E(\alpha)$.
Section 5 is devoted to the properties of the pressure function: a Ruelle type formula says that the limit in the law of large numbers is the derivative
of the pressure; the pressure function is  an analytic and strictly convex function (except the trivial case); the extreme values of the derivative of the
pressure are studied. In Section 6, we establish the Gibbs property of the telescopic product measures defined by the solution of the non--linear transfer operator.  After all these preparations, many of which have their own interests, we prove the main theorem (Theorem~\ref{thm principal}) in Section 7.
In Section 8, we discuss the invariant part of $E(\alpha)$.
Some concrete examples are presented in Section 9.
In the final section, we make some remarks and present some unsolved problems.

\medskip
{\em Acknowledgement:} The authors would like to thank B. Host for his interests in the work and especially for his remarks, some of which are contained
in Section 8.
\medskip

\section{Telescopic product measures and LLN \label{section2}}
In this section, we will study telescopic product measures and establish a law of large numbers (LLN).
These measures, which take into account the multiplicative structure of the multiple ergodic averages $A_n\varphi(x)$, will play
the same role as Gibbs measures played in the study of simple ergodic averages. In the next section,  we will prove that $\mathbb{P}_\mu$ is
exact and its dimension is equal to
  $$\dim_{H}\P_{\mu}=\frac{(q-1)^2}{\log m}\sum_{k=1}^{\infty}\frac{H_k(\mu)}{q^{k+1}}$$
  where
  $$H_k(\mu)=-\sum_{a_1,\cdots,a_k\in S}\mu([a_1\cdots a_k])\log \mu([a_1\cdots a_k]).
  $$
  We could call $H_k$ the $k$-th entropy of $\mu$. But we should point out that $\mu$
  is not assumed to be invariant and that $\mathbb{P}_\mu$ is not invariant either.

\subsection{Telescopic product measures}
Let us recall the definition of the telescopic product measure $\P_\mu$.  Consider the following partition of $\N^*$:
 $$
 \N^*=\bigsqcup_{i\geq 1,q\nmid i}\Lambda_i\ \ {\rm with}\ \Lambda_i=\{iq^j\}_{j\ge 0}.
 $$
 Then we decompose $\Sigma_m$ as follows:
 $$
 \Sigma_m=\prod_{i\geq1,q\nmid i}S^{\Lambda_i}.
 $$

 Let $\mu$ be a probability measure on $\Sigma_m$.
 We consider $\mu$ as a measure on $S^{\Lambda_i}$, which is identified with $\Sigma_m$, for every $i$ with $q\nmid i$. Then we define the infinite product measure $\P_\mu$ on   $\prod_{i\geq1,q\nmid i}S^{\Lambda_i}$ of the copies of $\mu$. More precisely,  for any word $u$ of length $n$ we define
 $$
 \P_{\mu}([u])=\prod_{i\leq n,q\nmid i}\mu([u_{|_{\Lambda_i}}]),
 $$
 where $[u]$ denotes the cylinder of all sequences starting with $u$.
 \medskip

We consider $(\Sigma_{m},\P_{\mu})$ as a probability space. Let $X_{k}(x)=x_{k}$ be the $k$-th coordinate projection. For each $i$ with $q\nmid i$, consider the process $Y^{(i)}=(X_k)_{k\in \Lambda_i}$. Then, by the definition of $\P_{\mu}$, the following fact is obvious.
\begin{lemma}\label{prop ind}
The processes $Y^{(i)}=(X_k)_{k\in \Lambda_i}$ for different $i\geq 1 $ with $q\nmid i$ are $\P_{\mu}$-independent and identically distributed with
 $\mu$ as the common probability law.
\end{lemma}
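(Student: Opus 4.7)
The plan is to unpack the definition of $\mathbb{P}_\mu$ on cylinders and verify the two statements (independence and equality in law) by a standard $\pi$-system argument. Since cylinders generate the Borel $\sigma$-algebra of $\Sigma_m$, and since the collection of cylinders on $\prod_{i\ge 1,\, q\nmid i} S^{\Lambda_i}$ that constrain only finitely many of the factors $S^{\Lambda_i}$ (each constraint being itself a cylinder on the corresponding $\Sigma_m$) forms a $\pi$-system generating the full product $\sigma$-algebra, it suffices to check the factorization on such cylinders.

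First, I would fix finitely many distinct indices $i_1,\dots,i_r$ (all coprime to $q$ with $q\nmid i_s$) and cylinders $[v^{(s)}]\subset S^{\Lambda_{i_s}}$ of lengths $N_s$ (as subsets of the $s$-th factor, identified with $\Sigma_m$). Choose $n$ large enough so that $\{i_1,\dots,i_r\}\subset\{1,\dots,n\}$ and so that for each $s$ the first $N_s$ elements of $\Lambda_{i_s}$ lie in $\{1,\dots,n\}$. The event $\bigcap_s \{Y^{(i_s)}\in [v^{(s)}]\}$ is then a disjoint union of cylinders $[u]$ of length $n$ on $\Sigma_m$, where $u$ is any word whose restriction to $\Lambda_{i_s}\cap\{1,\dots,n\}$ begins with $v^{(s)}$ (for each $s=1,\dots,r$), with no constraint on the remaining $\Lambda_i$. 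Summing the defining formula
$$\mathbb{P}_\mu([u])=\prod_{i\leq n,\, q\nmid i}\mu([u_{|_{\Lambda_i}}])$$
over all such $u$, the contribution of every index $i\in\{1,\dots,n\}\setminus\{i_1,\dots,i_r\}$ with $q\nmid i$ collapses by $\sum_{w}\mu([w])=1$, where $w$ ranges over the unconstrained letters on $\Lambda_i\cap\{1,\dots,n\}$. What remains is exactly $\prod_{s=1}^r\mu([v^{(s)}])$, which proves both independence and that each marginal on $S^{\Lambda_{i_s}}$ equals $\mu$ under the canonical identification $S^{\Lambda_{i_s}}\simeq\Sigma_m$.

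There is no real obstacle here; the only care needed is the bookkeeping that the unconstrained factors sum to $1$ and that the identification $S^{\Lambda_i}\cong\Sigma_m$ (sending $k=iq^j\mapsto j$) preserves the cylinder structure used to define $\mu$ on each factor. This is why the author labels the statement ``obvious'' once the product-measure construction has been laid out.
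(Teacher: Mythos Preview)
Your proposal is correct and is precisely the standard verification the paper alludes to when it says the fact is ``obvious'' from the definition of $\P_\mu$; the paper gives no proof at all beyond that remark. Your cylinder/$\pi$-system argument is the natural way to make the claim rigorous, and the bookkeeping you describe (summing out unconstrained factors to $1$) is exactly what is needed.
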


As we shall see, the behaviour of $A_n\varphi(x)$ as $n \to \infty$ will be described by measures $\P_{\mu}$ with particular choices of $\mu$. It is natural that $\P_\mu$ strongly depends on the above partition of $\N^*$. The following is a detail of the partition which will be useful.
Fix $n\in \N^*$. Let $$\Lambda_i(n)=\Lambda_i\cap \{1,\cdots,n\}. $$
We are going to examine the cardinality $\sharp\Lambda_i(n)$, called the length of $\Lambda_i(n)$ and  the number $N(n,q,k)$ of $\Lambda_i(n)$'s of a given length $k$.

\begin{lemma}\label{lem dec}\ Let $k, n \in \N^*$.\\
\indent {\rm
(1)}  $\sharp \Lambda_i(n)=k$ if and only if $\frac{n}{q^k}<i\leq \frac{n}{q^{k-1}}.$ Consequently we have $$\sharp \Lambda_i(n)=\left\lfloor\log_q\frac{n}{i}\right\rfloor.$$ \\
\indent {\rm (2)} We have the partition
$$\{1,\cdots,n\}=\bigsqcup_{k=1}^{\lfloor \log_q
n\rfloor}\ \ \bigsqcup_{\frac{n}{q^k} <i\leq \frac{n}{q^{k-1}}, q\nmid
i}\Lambda_i(n).$$
\indent {\rm  (3)}  $N(n,q,k)$ is the number of $i$'s such that $q\nmid i$ and  $\frac{n}{q^k} <i\leq \frac{n}{q^{k-1}}$. 
We have
$$ \left| \frac{N(n,q,k)}{n}-\frac{(q-1)^2}{q^{k+1}}\right| \le \frac{4}{n}.$$
\end{lemma}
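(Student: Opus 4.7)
The plan is to handle the three parts one after another, each reducing to elementary counting. The whole lemma is really a bookkeeping exercise about the multiplicative partition of $\N^*$; there is no serious obstacle, only a need for care with floor-function errors in part~(3).

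For part~(1), I would simply unpack the definition: $\Lambda_i(n)$ consists of those $iq^j$ with $j\ge 0$ and $iq^j\le n$, i.e.\ the indices $j=0,1,\dots,j_{\max}$, where $j_{\max}$ is the largest integer with $iq^{j_{\max}}\le n$. Thus $\sharp\Lambda_i(n)=k$ is equivalent to $iq^{k-1}\le n<iq^k$, which rearranges to $n/q^k<i\le n/q^{k-1}$. The displayed formula involving $\lfloor\log_q(n/i)\rfloor$ then follows immediately by taking logarithms.

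For part~(2), I would use the fact that every positive integer $m$ admits a unique decomposition $m=iq^j$ with $q\nmid i$ and $j\ge 0$; hence $\{1,\dots,n\}$ is the disjoint union of the nonempty sets $\Lambda_i(n)$ over $q\nmid i$. Grouping these according to their cardinality and invoking part~(1) to describe the cardinality condition gives exactly the claimed double disjoint union. The upper index $\lfloor\log_q n\rfloor$ is the maximum possible cardinality, attained by $\Lambda_1(n)$.

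For part~(3), the task is to count integers $i\in(n/q^k,n/q^{k-1}]$ with $q\nmid i$. By inclusion--exclusion (writing a multiple of $q$ as $i=qj$ and counting $j\in(n/q^{k+1},n/q^k]$),
\begin{equation*}
N(n,q,k)=\bigl(\lfloor n/q^{k-1}\rfloor-\lfloor n/q^k\rfloor\bigr)-\bigl(\lfloor n/q^k\rfloor-\lfloor n/q^{k+1}\rfloor\bigr).
\end{equation*}
Replacing each $\lfloor\cdot\rfloor$ by its argument with an error of absolute value at most~$1$ and simplifying $q^2-2q+1=(q-1)^2$, I obtain
\begin{equation*}
N(n,q,k)=\frac{n(q-1)^2}{q^{k+1}}+\theta,\qquad |\theta|\le 1+2+1=4.
\end{equation*}
Dividing by $n$ yields the stated bound. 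The only mildly delicate point is tracking signs so the four rounding errors genuinely add to at most~$4$ in absolute value, but this is routine.
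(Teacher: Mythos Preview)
Your proposal is correct and follows essentially the same approach as the paper: part~(1) by unpacking the definition of $\Lambda_i(n)$, part~(2) by grouping the obvious partition $\{1,\dots,n\}=\bigsqcup_{i\le n,\,q\nmid i}\Lambda_i(n)$ according to cardinality, and part~(3) by inclusion--exclusion followed by bounding the rounding errors. The only cosmetic difference is that the paper writes the count in~(3) with ceilings rather than floors, but the resulting estimate is identical.
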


\begin{proof}
(1) It is simply because $\sharp \Lambda_i(n)=k$ means that $$\Lambda_{i}(n)=\{i,iq,\cdots,iq^{k-1}\}\  { \rm with }\ iq^{k-1}\leq n<iq^{k}.$$\\
(2) We have the obvious partition  $$\{1,\cdots , n\}=\bigsqcup_{i\leq n,q\nmid i}\Lambda_{i}(n).$$ Then we collect $\Lambda_i(n)$ by their lengths. By (1), we have  $1\leq\sharp\Lambda_{i}(n)\leq \left\lfloor\log_{q}n\right\rfloor$ and
$$\{1,\cdots , n\}=\bigsqcup_{k=1}^{\left\lfloor\log_{q}n\right\rfloor}\bigsqcup_{\substack{i<n,q\nmid i, \\
\sharp\Lambda_{i}(n)= k}}\Lambda_{i}(n).$$
(3) By (1), $N(n, q, k)$ is obviously the numbers of $i$ such that $\frac{n}{q^k}< i \le \frac{n}{q^{k-1}}$ and $q\nmid i$.
It is the number of $i$'s such that $\frac{n}{q^k}< i \le \frac{n}{q^{k-1}}$ minus the $i$'s such that $\frac{n}{q^k}< i \le \frac{n}{q^{k-1}}$
and $q\mid i$, 
i.e.
$$
    N(n,q,k) = \left( \left\lceil \frac{n}{q^{k-1}}\right\rceil - \left\lceil \frac{n}{q^{k}}\right\rceil\right)
    - \left( \left\lceil \frac{n}{q^{k}}\right\rceil - \left\lceil \frac{n}{q^{k+1}}\right\rceil\right).
$$
 It follows that
$$
\left| N(n,q,k)- \left(\frac{n}{q^{k-1}}- \frac{2n}{q^k} + \frac{n}{q^{k+1}}\right)\right| \le 4. $$
It is the desired estimate for $\frac{1}{q^{k-1}}-\frac{2}{q^k} +\frac{1}{q^{k+1}} = \frac{(q-1)^2}{q^{k+1}}$.
\end{proof}

\bigskip

Now we consider $(\prod_{i\geq1,q\nmid i}S^{\Lambda_i},\P_\mu)$ as a probability space $(\Omega,\P_\mu)$. Let $(F_k)_{k\ge 1}$ be a sequence of functions defined on $\Sigma_m$. For each $k$, there exists a unique integer $i(k)$ such that $k=i(k)q^j$ and $q\nmid i$. Then $x \mapsto F_k(x_{|_{\Lambda_{i(k)}}})$
defines a random variable on $\Omega$. Later,
we will study the law of large numbers for the sequence of variables $\{F_k(x_{|_{\Lambda_{i(k)}}})\}_{k\ge 1}$.
Notice that if $i(k)\not=i(k')$, then the two variables $F_k(x_{|_{\Lambda_{i(k)}}})$ and $F_{k'}(x_{|_{\Lambda_{i(k')}}})$
are independent. But if $i(k)=i(k')$, they are not independent in general.
In order to prove the law of large numbers, we will need the following technical lemma which allows us to compute the expectation of the product of $F_k(x_{|_{\Lambda_{i(k)}}})$'s. The proof of the lemma is based on the independence of $x_{|_{\Lambda_i}}$'s.

\begin{lemma} \label{LLN lemma 0}
Let $(F_k)_{k\geq 1}$ be a sequence of functions defined on $\Sigma_m$. Then for any integer $N\geq 1$, we have
$$\E_{\P_\mu}\left(\prod_{k=1}^N F_k(x_{|_{\Lambda_{i(k)}}})\right)=\prod_{k=1}^{\lfloor\log_q N\rfloor}\prod_{\frac{N}{q^k} <i\leq \frac{N}{q^{k-1}}, q\nmid i}\E_{\mu}\left(\prod_{h=0}^{k-1}F_{iq^h}(x)\right).$$
In particular, 
for any function $G$ defined on $\Sigma_m$, for any $n\geq 1$,
$$\E_{\P_\mu}G(x_{|_{\Lambda_{i(n)}}})=\E_{\mu}G(\cdot).$$
\end{lemma}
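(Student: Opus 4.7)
The plan is to prove the formula by simply regrouping the product according to which equivalence class $\Lambda_{i}$ each index $k$ belongs to, and then appealing to Lemma~\ref{prop ind}.

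First I would use the partition from Lemma~\ref{lem dec}(2),
$$\{1,\dots,N\} = \bigsqcup_{i\le N,\, q\nmid i} \Lambda_i(N),$$
to rewrite the product indexed by $k$ as a nested product over $i$ with $q\nmid i$, $i\le N$, and over $h=0,1,\dots,\sharp\Lambda_i(N)-1$, where the inner index runs through $k=iq^h\in\Lambda_i(N)$. Since every $k=iq^h$ in $\Lambda_i(N)$ satisfies $i(k)=i$, each factor $F_k(x_{|_{\Lambda_{i(k)}}})$ becomes $F_{iq^h}(x_{|_{\Lambda_i}})$, so that
$$\prod_{k=1}^N F_k(x_{|_{\Lambda_{i(k)}}}) \;=\; \prod_{\substack{i\le N\\ q\nmid i}} \prod_{h=0}^{\sharp\Lambda_i(N)-1} F_{iq^h}(x_{|_{\Lambda_i}}).$$

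Next I would observe that for each $i$, the inner product depends on $x$ only through $x_{|_{\Lambda_i}}$. By Lemma~\ref{prop ind}, the coordinate processes $(x_{|_{\Lambda_i}})_{q\nmid i}$ are $\P_\mu$-independent, each distributed according to $\mu$ (under the identification $S^{\Lambda_i}\simeq\Sigma_m$). Independence lets the expectation of the outer product factor into a product of expectations, and the identical distribution lets each single-$i$ expectation be computed under $\mu$:
$$\E_{\P_\mu}\!\left(\prod_{k=1}^N F_k(x_{|_{\Lambda_{i(k)}}})\right) \;=\; \prod_{\substack{i\le N\\ q\nmid i}} \E_\mu\!\left(\prod_{h=0}^{\sharp\Lambda_i(N)-1} F_{iq^h}(x)\right).$$

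Finally I would repackage the outer product by grouping $i$'s according to the length $\sharp\Lambda_i(N)=k$. By Lemma~\ref{lem dec}(1), $\sharp\Lambda_i(N)=k$ is equivalent to $N/q^k<i\le N/q^{k-1}$, and $k$ ranges from $1$ to $\lfloor\log_q N\rfloor$. Substituting yields the claimed formula. For the stated special case, take $F_n=G$ and $F_k\equiv 1$ for all $k\neq n$; only the factor corresponding to the unique $i$ with $n\in\Lambda_i$ survives, reducing the identity to $\E_{\P_\mu}G(x_{|_{\Lambda_{i(n)}}})=\E_\mu G$. (Equivalently, this follows directly from Lemma~\ref{prop ind}, since $x_{|_{\Lambda_{i(n)}}}$ has law $\mu$.)

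There is no real obstacle here: the lemma is essentially an accounting statement that repackages a single application of Fubini/independence through the combinatorial partition already established in Lemma~\ref{lem dec}. The only point that requires a moment of care is keeping the identification of $S^{\Lambda_i}$ with $\Sigma_m$ consistent, so that writing $F_{iq^h}(x)$ inside $\E_\mu$ makes sense and matches the value $F_{iq^h}(x_{|_{\Lambda_i}})$ appearing before the expectation is taken.
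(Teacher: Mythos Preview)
Your proof is correct and follows essentially the same approach as the paper: regroup the product via the partition $\{1,\dots,N\}=\bigsqcup_{i\le N,\,q\nmid i}\Lambda_i(N)$, apply the independence and identical distribution from Lemma~\ref{prop ind} to factor the expectation, and then sort the resulting product over $i$ by the length $\sharp\Lambda_i(N)=k$ using Lemma~\ref{lem dec}. The only cosmetic difference is that the paper applies independence first and then regroups, whereas you regroup first and then apply independence; the special case is handled identically.
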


\begin{proof}
Let $$Q_{N}(x)=\prod_{k=1}^NF_k(x_{|_{\Lambda_{i(k)}}}), \ \ Q_{N,i}(x)=\prod_{k\in \Lambda_{i}(N)}F_k(x_{|_{\Lambda_i}}).$$
Since the variables $x_{|_{\Lambda_{i}}}$ for different $i\geq1 $ with $q\nmid i$ are independent under $\P_{\mu}$ (by Lemma \ref{prop ind}), we have
\begin{equation}\label{1}
\E_{\P_{\mu}}Q_{N}=\prod_{i\leq N,q\nmid i}\E_{\P_{\mu}}Q_{N,i}.
\end{equation}
 Then, by (2) of Lemma \ref{lem dec}, we can rewrite the right hand side in (\ref{1}) to get
 $$\E_{\P_{\mu}}Q_{N}=\prod_{k=1}^{\lfloor\log_q N\rfloor}\prod_{\frac{N}{q^k} <i\leq \frac{N}{q^{k-1}}} \E_{\P_{\mu}}Q_{N,i}.$$
However, the marginal measures on $S^{\Lambda_i}$ of $\P_{\mu}$ is equal to $\mu$ and $\Lambda_i(N)=\{i,iq,\cdots,iq^{k-1}\}$ if $\frac{N}{q^k} <i\leq \frac{N}{q^{k-1}}$.  So
 $$\E_{\P_{\mu}}Q_{N,i}=\E_{\mu}\left(\prod_{h=0}^{k-1}F_{iq^h}(x)\right).$$

 Now, for any function $G$ defined on $\Sigma_m$ and any $n\in \N^*$, if we set $F_n=G$ and $F_k=1$ for $k\neq n$ we have
 $$\E_{\P_\mu}G(x_{|_{\Lambda_{i(n)}}})=\E_{\mu}G(x).$$
\end{proof}

\subsection{Law of large numbers}
In order to prove the law of large numbers (LLN), we need  the following result.

Recall that the covariance of two bounded functions $f,g$ with respect to $\mu$ is defined by
$${\rm cov}_\mu(f,g)=\E_{\mu}\left[(f-\E_\mu f)(g-\E_\mu g)\right]$$
\begin{proposition}\label{lemma LLN}
Let $(F_k)_{k\ge 1}$ be a sequence of functions defined on $\Sigma_m$ satisfying
\begin{equation}\label{condition1}
{\rm cov}_\mu \left(F_{iq^{j_1}}(x)F_{iq^{j_2}}(x)\right)\leq C\eta^{\frac{j_1+j_2}{2}}
\end{equation}
  for some constants $C>0$ and $0<\eta<q^{3/2}$ and for all $i \ge 1$ with $q\nmid i$ and all $j_1, j_2 \in \mathbb{N}$.
  Let $p_0, p_1$ and $p_2$ be three maps from $\N^*$ into $\N^*$ such that
\begin{equation}\label{condition3}
\forall n\in \N^*, \ \ 1\le \frac{p_2(n)}{p_1(n)}\leq \alpha; \quad
\sum_{n=1}^\infty \frac{p_2(n)^{\frac{3}{2}-\epsilon}}{p_0(n)^2}<+\infty.
\end{equation}
for some $\alpha>1$ and some $0<\epsilon<1/2$ with $q^{3/2-\epsilon}>\eta$.
Then for  $\P_{\mu}-$a.e. $x\in \Sigma_{m}$
$$\lim_{n\to\infty}\frac{1}{p_0(n)}\sum_{k=p_1(n)}^{p_2(n)}\left(F_k(x_{|_{\Lambda_{i(k)}}})-\E_{\mu}F_k(x)\right)=0. $$
\end{proposition}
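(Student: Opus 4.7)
The plan is a classical second-moment-plus-Borel--Cantelli argument. Center the variables by setting $G_k = F_k - \E_\mu F_k$, let $S_n(x) = \sum_{k=p_1(n)}^{p_2(n)} G_k(x_{|_{\Lambda_{i(k)}}})$, and note that by Lemma \ref{LLN lemma 0} each summand is $\P_\mu$-centered. By Chebyshev's inequality and the first Borel--Cantelli lemma, it will suffice to prove
$$\sum_{n\ge 1} \frac{\E_{\P_\mu}(S_n^2)}{p_0(n)^2} < \infty,$$
which yields $S_n(x)/p_0(n) \to 0$ for $\P_\mu$-a.e.\ $x$.

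To estimate the second moment, I expand
$$\E_{\P_\mu}(S_n^2) = \sum_{k,k'=p_1(n)}^{p_2(n)} \E_{\P_\mu}\bigl[G_k(x_{|_{\Lambda_{i(k)}}}) G_{k'}(x_{|_{\Lambda_{i(k')}}})\bigr]$$
and observe via Lemma \ref{prop ind} that whenever $i(k)\neq i(k')$ the two factors live on independent blocks and are centered, so the corresponding term vanishes. Only the diagonal contribution $i(k)=i(k')=i$ survives. Writing $k=iq^{j_1}$, $k'=iq^{j_2}$ and applying hypothesis (\ref{condition1}),
$$\E_{\P_\mu}(S_n^2) \le C \sum_{q\nmid i \le p_2(n)} \biggl(\sum_{j:\ iq^j\in[p_1(n),p_2(n)]} \eta^{j/2}\biggr)^{\!2}.$$

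Next I carry out the geometric summation. Set $\gamma = \log_q \eta$, so that the hypothesis $q^{3/2-\epsilon}>\eta$ reads $\gamma < 3/2-\epsilon$. The constraint $p_2(n)\le \alpha p_1(n)$ limits the number of admissible $j$'s to at most $\log_q\alpha+1$, while every such $j$ satisfies $q^j\le p_2(n)/i$, hence $\eta^{j/2}\le (p_2(n)/i)^{\gamma/2}$ when $\gamma>0$ and $\eta^{j/2}\le 1$ otherwise. After squaring and summing in $i$,
$$\E_{\P_\mu}(S_n^2) \le C'\biggl(p_2(n) + p_2(n)^{\gamma}\sum_{i=1}^{p_2(n)} i^{-\gamma}\biggr) = O\bigl(p_2(n)^{\max(\gamma,1)}\log(2+p_2(n))\bigr).$$
Since $\max(\gamma,1)<3/2-\epsilon$ (using $\epsilon<1/2$), the logarithmic factor can be absorbed into a slightly larger exponent, giving $\E_{\P_\mu}(S_n^2)=O(p_2(n)^{3/2-\epsilon})$ for $n$ large. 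Combined with the summability built into (\ref{condition3}), this yields the required series bound and concludes the proof.

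The main obstacle is the quantitative control of the inner geometric sum: since the covariance bound $\eta^{(j_1+j_2)/2}$ may grow exponentially in $j_1+j_2$, it is not a priori clear that the total second moment stays polynomially bounded of degree $<3/2$. The key manipulation is the conversion $\eta^j=(q^j)^{\gamma}$, which turns the exponential-in-$j$ bound into a polynomial-in-$p_2(n)/i$ one; the precise threshold $\eta<q^{3/2}$ is then what forces the global exponent to stay below $3/2$, while the buffer $\epsilon>0$ in hypothesis (\ref{condition3}) is what absorbs the logarithmic loss arising at the borderline value $\gamma=1$.
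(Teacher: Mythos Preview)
Your argument is correct and follows essentially the same second-moment/Borel--Cantelli route as the paper: expand $\E_{\P_\mu}(S_n^2)$, use the block independence from Lemma~\ref{prop ind} to kill off-diagonal terms, bound the surviving $i(k)=i(k')$ contributions via (\ref{condition1}), and then sum over $i$.

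The only bookkeeping difference is in how the sum over $i$ is handled. The paper bounds each cross term by $C\eta^{\log_q(p_2(n)/i)}$, multiplies by $(\sharp(\Lambda_i\cap[p_1(n),p_2(n)]))^2\le(1+\log_q\alpha)^2$, and then sums over the at most $p_2(n)-p_1(n)$ relevant values of $i$, using that the $j$-th such $i$ is at least $j$ to obtain
\[
\sum_i \eta^{\log_q(p_2(n)/i)} \le \sum_{j=1}^{p_2(n)} \Bigl(\tfrac{p_2(n)}{j}\Bigr)^{3/2-\epsilon} = O\bigl(p_2(n)^{3/2-\epsilon}\bigr),
\]
arriving at the exponent $3/2-\epsilon$ directly. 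You instead factor the double sum as $(\sum_j\eta^{j/2})^2$, convert $\eta^j=(q^j)^\gamma$, and sum $\sum_{i\le p_2(n)} i^{-\gamma}$, which produces the slightly weaker exponent $\max(\gamma,1)$ together with a logarithmic factor; you then absorb this into $3/2-\epsilon$ using the strict inequality $\max(\gamma,1)<3/2-\epsilon$ guaranteed by $\epsilon<1/2$ and $q^{3/2-\epsilon}>\eta$. Your treatment of the case $\eta\le 1$ (i.e.\ $\gamma\le 0$) is in fact more explicit than the paper's; the paper's displayed bound $\eta^{(j_1+j_2)/2}\le\eta^{\log_q(p_2(n)/i)}$ tacitly assumes $\eta\ge 1$, whereas you separate this case cleanly.
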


\begin{proof}  Without loss of generality, we can assume that $\E_{\P_\mu}F_k(x_{|_{\Lambda_{i(k)}}})=0$ for all $k\in \N^*$. Otherwise, we replace $F_k(x_{|_{\Lambda_{i(k)}}})$ by $F_k(x_{|_{\Lambda_{i(k)}}})-\E_{\P_\mu}F_k(x_{|_{\Lambda_{i(k)}}})$. 
We denote
$$
Z_n=\frac{1}{p_0(n)} \sum_{k=p_1(n)}^{p_2(n)} Y_k \quad {\rm with}\ \ Y_k=F_k(x_{|_{\Lambda_{i(k)}}}).
$$
We have only to show that
$$
\sum_{n=1}^{\infty}\E_{\P_\mu}Z_{n}^2<+\infty.
$$
Notice that $$
\E_{\P_\mu}Z_{n}^2=\frac{1}{p_0^2(n)}
 \sum_{p_1(n)\leq
u,v\leq p_2(n)}\E_{\P_\mu} Y_uY _v.
$$
Observe that by Lemma \ref{prop ind}, $\E_{\P_\mu}Y_uY_v\neq 0$ only if $i(u)=i(v)$, in other words only if
$u$ and $v$ are in the same set $\Lambda_i$. So
\begin{equation}\label{LLN lemma 1}
\E_{\P_\mu}Z_{n}^2=\frac{1}{p_0^2(n)} \sum_{\substack{i\geq1,q\nmid i, \\
\Lambda_i\cap[p_1(n),p_2(n)]\neq \emptyset}}\sum_{u,v\in\Lambda_i\cap[p_1(n),p_2(n)]}\E_{\P_\mu} Y_uY_v.
\end{equation}
However by the hypothesis (\ref{condition1}) on  the sequence $(F_k)_{k\ge 1}$, for any $u,v\in\Lambda_i\cap[p_1(n),p_2(n)]$ we have
$$\left|\E_{\P_\mu} Y_uY_v\right|=\left|\E_{\mu}F_u(x)F_v(x)\right|\leq C\eta^{\log_q \frac{p_2(n)}{i}}.$$

Substituting the last estimate into (\ref{LLN lemma 1}), we get
\begin{equation}\label{LLN lemma 2}
\E_{\P_\mu}Z_{n}^2\leq\frac{C}{p_0^2(n)}\sum_{\substack{i\geq1,q\nmid i, \\
\Lambda_i\cap[p_1(n),p_2(n)]\neq \emptyset}}\eta^{\log_q \frac{p_2(n)}{i}}\sharp \left(\Lambda_i\cap[p_1(n),p_2(n)]\right).
\end{equation}
The cardinality $\sharp \left(\Lambda_i\cap[p_1(n),p_2(n)]\right)$ is estimated as follows:
\begin{equation}\label{LLN lemma 3}
\sharp \left(\Lambda_i\cap[p_1(n),p_2(n)]\right)\leq 1+\log_q\alpha.
\end{equation}
In fact, assume that $$\Lambda_i\cap[p_1(n),p_2(n)]=\{a_1,\cdots,a_k\}$$
with $a_1<\cdots <a_k$.
 Then by the definition of $\Lambda_i$, we must have $\frac{a_{j+1}}{a_j}\geq q$ for $1\leq j\leq k-1$ so that $$\frac{a_k}{a_1}\geq q^{k-1}.$$ On the other hand,
$$\frac{a_k}{a_1}\leq \frac{p_2(n)}{p_1(n)}\leq \alpha. $$
So $q^{k-1}\leq \alpha$, i.e. $k \le 1+\log_q\alpha$.
Substituting (\ref{LLN lemma 3}) into (\ref{LLN lemma 2}), we get
\begin{equation}\label{LLN lemma 4}
\E_{\P_\mu}Z_{n}^2\leq\frac{C(1+\log_q\alpha)^+}{p_0^2(n)}\sum_{\substack{i\geq1,q\nmid i, \\
\Lambda_i\cap[p_1(n),p_2(n)]\neq \emptyset}}\eta^{\log_q \frac{p_2(n)}{i}}.
\end{equation}

There are at most  $p_2(n)-p_1(n)$ integers $i$ such that $i\geq1,q\nmid i$ and $\Lambda_i\cap[p_1(n),p_2(n)]\neq \emptyset$.
 If they are increasingly ordered, then the $j$-th is bigger than $j$. We deduce that
$$\sum_{\substack{i\geq1,q\nmid i, \\
\Lambda_i\cap[p_1(n),p_2(n)]\neq \emptyset}}\eta^{\log_q \frac{p_2(n)}{i}}
\leq \sum_{j=1}^{p_2(n)-p_1(n)}\eta^{\log_q \frac{p_2(n)}{j}} \le \sum_{j=1}^{p_2(n)-p_1(n)}\left(\frac{p_2(n)}{j}\right)^{\frac{3}{2}-\epsilon},
$$
where the last inequality is due to the fact that $\log_q \eta <3/2 -\epsilon$.
Since $\epsilon<1/2$, we have $\sum_{j=1}^\infty j^{-(3/2-\epsilon)}<\infty$.
Then
 $$ \E_{\P_\mu}Z_{n}^2\le C \frac{p_2(n)^{3/2-\epsilon}}{p_0(n)^2}.$$
 We conclude by the hypothesis which says that the right hand side of the above estimate is the general term of a
 convergent series.
\end{proof}

The following is the LLN which will be useful for our computation of the dimension of the telescopic
product measure $\mathbb{P}_\mu$.

\begin{theorem}\label{LLN}
Let $(F_k)_{k\ge 1}$ be a sequence of functions defined on $\Sigma_m$. Suppose that there exist \  $C>0$ and $0<\eta<q^{3/2}$ such that for any $i\geq 1$ with $q\nmid i$, any $j_1,j_2\in \N$,
\begin{equation}\label{condition2}
{\rm cov}_{\mu}\left(F_{iq^{j_1}}(x), F_{iq^{j_2}}(x)\right)\leq C\eta^{\frac{j_1+j_2}{2}}.
\end{equation}
Then for  $\P_{\mu}-$a.e. $x\in \Sigma_{m}$
$$\lim_{n\to\infty}\frac{1}{n}\sum_{k=1}^{n}\left(F_k(x_{|_{\Lambda_{i(k)}}})-\E_{\mu}F_k(x)\right)=0. $$
\end{theorem}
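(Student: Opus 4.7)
The plan is to deduce Theorem \ref{LLN} from Proposition \ref{lemma LLN} by a geometric blocking argument combined with a Toeplitz--Ces\`aro summation, followed by an interpolation step to pass from a subsequence to every $n$.

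First, fix $\epsilon\in(0,1/2)$ with $q^{3/2-\epsilon}>\eta$; such $\epsilon$ exists since $\eta<q^{3/2}$. Let $n_j:=2^j$, $Y_k:=F_k(x_{|_{\Lambda_{i(k)}}})-\E_\mu F_k$, $S_n:=\sum_{k=1}^n Y_k$, and $B_j:=S_{n_j}-S_{n_{j-1}}$. I would apply Proposition \ref{lemma LLN} with $p_1(j)=n_{j-1}+1$, $p_2(j)=n_j$, $p_0(j)=n_j-n_{j-1}=2^{j-1}$. The ratio $p_2(j)/p_1(j)\le 2$ is bounded, and
$$\sum_{j\ge 1}\frac{p_2(j)^{3/2-\epsilon}}{p_0(j)^2}\asymp\sum_{j\ge 1} 2^{-j(1/2+\epsilon)}<+\infty,$$
so condition (\ref{condition3}) is satisfied and the conclusion of the proposition yields $B_j/(n_j-n_{j-1})\to 0$ $\P_\mu$-a.s.

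Next, I would exploit the telescoping identity
$$\frac{S_{n_j}}{n_j}=\sum_{\ell=1}^j\frac{n_\ell-n_{\ell-1}}{n_j}\cdot\frac{B_\ell}{n_\ell-n_{\ell-1}},$$
observing that the weights $w_{\ell,j}:=(n_\ell-n_{\ell-1})/n_j=2^{\ell-1-j}$ satisfy $\sum_{\ell\le j}w_{\ell,j}\le 1$ and $w_{\ell,j}\to 0$ as $j\to\infty$ for each fixed $\ell$. The Toeplitz--Ces\`aro lemma applied to the null sequence $B_\ell/(n_\ell-n_{\ell-1})$ then gives $S_{n_j}/n_j\to 0$ $\P_\mu$-a.s.

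Finally, to pass from the subsequence to all $n$: for $n_{j-1}<n\le n_j$,
$$\frac{|S_n|}{n}\le\frac{|S_{n_{j-1}}|}{n_{j-1}}+\frac{1}{n_{j-1}}\max_{n_{j-1}<m\le n_j}|S_m-S_{n_{j-1}}|,$$
whose first term vanishes by the previous step. For the second term, the $L^2$ computation inside the proof of Proposition \ref{lemma LLN} supplies $\E|S_m-S_{n_{j-1}}|^2\le C(m-n_{j-1})^{3/2-\epsilon}$ uniformly in $m\in(n_{j-1},n_j]$, and by exploiting the $\P_\mu$-independence of the processes $(X_k)_{k\in\Lambda_i}$ across distinct $i$'s (Lemma \ref{prop ind}), a dyadic Rademacher--Menchov type chaining yields
$$\E\max_{n_{j-1}<m\le n_j}|S_m-S_{n_{j-1}}|^2\le Cj^2(n_j-n_{j-1})^{3/2-\epsilon};$$
Borel--Cantelli then closes the interpolation since $\sum_j j^2\cdot 2^{-j(1/2+\epsilon)}<\infty$. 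The hard part is precisely this maximal inequality: the $Y_k$ are not orthogonal, only \emph{block-orthogonal} across the progressions $\Lambda_i$, so the standard Rademacher--Menchov estimate does not apply verbatim, and one must combine the across-block independence (Lemma \ref{prop ind}) with the within-block covariance decay (\ref{condition2}) to carry out the chaining with a summable logarithmic loss.
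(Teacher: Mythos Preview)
Your approach via geometric blocking and Toeplitz summation is sound through the subsequence step, but the interpolation step is both harder than you acknowledge and unnecessary. The paper's route is simpler precisely because of its choice of subsequence.

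The paper takes $n_k=k^2$ rather than $2^k$. With squares, the gap $(k+1)^2-k^2\sim 2k$ is $o(k^2)$, so the interpolation can be handled by the crude triangle inequality
\[
|Y_n|\le |Y_{k^2}|+\frac{1}{k^2}\sum_{j=k^2+1}^{(k+1)^2}|X_j|,
\]
and the last sum is dispatched by a second appeal to Proposition~\ref{lemma LLN} (applied to the sequence $|F_k|$, whose $\mu$-covariances inherit the bound~\eqref{condition2} via Cauchy--Schwarz and ${\rm Var}_\mu F_{iq^j}\le C\eta^j$), together with the trivial fact that $\frac{1}{k^2}\sum_{j=k^2+1}^{(k+1)^2}\E_\mu|F_j|=O(1/k)$. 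No maximal inequality is needed. The subsequence step itself is a direct variance computation $\sum_k\E Y_{k^2}^2<\infty$, not an invocation of Proposition~\ref{lemma LLN}.

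Your geometric blocks have length comparable to $n_{j-1}$, so the absolute-value bound fails and you genuinely need control of $\max_{n_{j-1}<m\le n_j}|S_m-S_{n_{j-1}}|$. The Rademacher--Menchov chaining you gesture at is not the right tool: it relies on orthogonality to make $\sum_{|I|=2^\ell}\E|\sum_{k\in I}Y_k|^2$ independent of the scale $\ell$, which is false here. What does work is the observation that within any block $(n_{j-1},n_j]$ each $\Lambda_i$ contributes at most $1+\lfloor\log_q 2\rfloor$ indices; splitting into that many ``layers,'' each layer is a partial sum of \emph{independent} centred variables (Lemma~\ref{prop ind}), and Doob's $L^2$ maximal inequality yields
\[
\E\max_{n_{j-1}<m\le n_j}|S_m-S_{n_{j-1}}|^2\le C\sum_{k=n_{j-1}+1}^{n_j}\E X_k^2\le C' n_j^{3/2-\epsilon},
\]
with no logarithmic loss at all. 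So your proposal can be completed, but not along the line you sketched, and the polynomial subsequence of the paper sidesteps this detour entirely.
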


\begin{proof}
Without loss of generality, we can assume that $\E_{\P_\mu}F_k(x_{|_{\Lambda_{i(k)}}})=0$ for all $k\in \N^*$.
Our aim is to prove $\lim_{n\to\infty}Y_{n}=0$  $\P_{\mu}$-a.e., where
$$Y_{n}=\frac{1}{n}\sum_{k=1}^n X_k \ \ \ {\rm with} \ \ \ X_k= F_{k}(x|_{\Lambda_{i(k)}}).$$

First we claim that it suffices to show
\begin{equation}\label{LLN 1}
\lim_{n\to\infty}Y_{n^2}=0, \ \  \P_{\mu}- {\rm a.e. }
\end{equation}
In fact, for every $n\in \N $ there exists a unique $k\in \N$ such that $k^2\leq n<(k+1)^2$. Then we have
$$
\left|Y_{n}\right|\leq \left|Y_{k^2}\right|+\frac{\left(|X_{k^2+1}|+\cdots+|X_{n}|+\cdots+|X_{(k+1)^2}|\right)}{k^2}.
$$
So, since $Y_{k^2} \to 0$ $\mathbb{P}_\mu$-a.e.,
 we have only to show
\begin{equation}\label{LLN 3}
\lim_{k\to\infty}\frac{\left(|X_{k^2+1}|+\cdots+|X_{n}|+\cdots+|X_{(k+1)^2}|\right)}{k^2}=0, \ \  \P_{\mu}- {\rm a.e. }
\end{equation}
Let $p_0, p_1$ and $p_2$ be the three maps from $\N^*$ to $\N^*$ defined as follows:
$$p_0(k)=p_1(k)=k^2,\ \  p_2(k)=(k+1)^2 \ { \rm for }\ k\in \N^*.$$
Then observe that $$1\le \frac{p_2(k)}{p_1(k)}= \frac{(k+1)^2}{k^2}\leq 4 \ \ \forall  k\in \N^*$$
$$\sum_{k=2}^\infty \frac{(p_2(k)^{\frac{3}{2}-\epsilon})}{p_0^2(n)}\leq\sum_{k=2}^\infty \frac{((k+1)^2)^{\frac{3}{2}-\epsilon}}{k^4}<+\infty.$$
Thus we have verified that the maps $p_0, p_1$ and $p_2$ satisfy the hypothesis  of  Lemma \ref{lemma LLN}. Then (\ref{LLN 3}) is assured  by Proposition \ref{lemma LLN}.

Now we are going to show
\begin{equation}\label{LLN 4}
\sum_{n=1}^{\infty}\E_{\P_\mu}Y_{n^2}^2<+\infty,
\end{equation}
which will imply (\ref{LLN 1}).  Notice that
$$
\E_{\P_\mu}Y_{n}^2=\frac{1}{n^2}\sum_{1\leq
u,v\leq n}E_{\P_\mu}X_u X_v.
$$
By Lemma \ref{prop ind}, we have $\mathbb{E}_{\P_\mu}X_u X_v\neq 0$ only if $i(u)=i(v)$. So
$$
\E_{\P_\mu}Y_{n}^2=\sum_{i\leq n,q\nmid i}\ \sum_{u,v\in \Lambda_i(n)}\mathbb{E}_{\P_\mu}X_u X_v.
$$
By (2) of Lemma \ref{lem dec}, we can rewrite the above sum as
\begin{equation}\label{LLN 5}
\E_{\P_\mu}Y_{n}^2=\sum_{k=1}^{\lfloor\log_q n\rfloor}\sum_{\substack{\frac{n}{q^k} <i\leq \frac{n}{q^{k-1}} \\
q\nmid i}} \sum_{u,v\in \Lambda_i(n)}\E_{\P_\mu} X_u X_v.
\end{equation}
Recall that $\E_{\P_\mu}X_k =\E_{\mu}F_k$ for all $k\in\N^*$ (Lemma \ref{LLN lemma 0}).
For $u,v\in \Lambda_i(n)$, we write $u=iq^{j_1}$ and $v=iq^{j_2}$ with $0\leq j_1,j_2\leq \sharp\Lambda_i(n)$. By  the  Cauchy-Schwarz inequality and the hypothesis (\ref{condition2}), we obtain
$$\left|\E_{\mathbb{P}_\mu} X_uX_v\right|\leq \sqrt{\E_{\mu} F_u^2} \sqrt{\E_{\mu} F_v^2}\leq C\eta^{\sharp \Lambda_i(n)}.$$
This estimate holds for all $u,v\in\Lambda_i(n)$. So
$$\sum_{u,v\in \Lambda_i(n)}|\mathbb{E}_{\P_\mu}X_u X_v |\leq C\left(\sharp\Lambda_i(n)\right)^2\eta^{\sharp \Lambda_i(n)}.$$
Substituting this estimate into (\ref{LLN 5}) and using (1) of Lemma \ref{lem dec}, we get
$$
\left|\E_{\P_\mu}Y_n^2\right|\leq \frac{C}{n^2}\sum_{k=1}^{\lfloor\log_q n\rfloor}\sum_{\substack{\frac{n}{q^k} <i\leq \frac{n}{q^{k-1}} \\ q\nmid i}}k^2\eta^{k} =\frac{C}{n^2}\sum_{k=1}^{\lfloor\log_q n\rfloor}k^2\eta^{k}N(n,q,k),
$$
where $N(n,q,k)$ appeared in  Lemma \ref{lem dec}. Then by (3) of Lemma  \ref{lem dec}, the last term is equivalent to
$$\frac{C(q-1)^2}{n}\sum_{k=1}^{\lfloor\log_q n\rfloor}\frac{k^2 \eta^{k}}{q^{k+1}}= O\left(\frac{1}{n}\left(\frac{\eta}{q}\right)^{\log_q n}\right)
=O\left(n^{-1/2-\epsilon}\right)$$
for some $\epsilon>0$. This implies (\ref{LLN 4}).
\end{proof}

\medskip

\subsection{A special LLN}

When, in the LLN (Theorem \ref{LLN}), the functions $(F_i)_i$ are all  the same function $F$, then we have the following special LLN.

\begin{theorem}\label{thm esperence general formula}
Let $\mu$ be any probability measure $\mu$ on $\Sigma_m$ and let
$F\in \mathcal{F}(S^{\ell})$.  For $\P_{\mu}$ a.e. $x\in \Sigma_m$
we have
$$\lim_{n\to\infty}\frac{1}{n}\sum_{k=1}^nF(x_k,\cdots,x_{kq^{\ell-1}})=(q-1)^2\sum_{k=1}^\infty\frac{1}{q^{k+1}}\sum_{j=0}^{k-1}\E_\mu F(x_j,\cdots,x_{j+\ell-1}).$$
\end{theorem}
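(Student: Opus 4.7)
The plan is to recognize this as a direct specialization of the LLN (Theorem~\ref{LLN}) plus an elementary asymptotic computation for the expectations, using the partition machinery from Lemma~\ref{lem dec}.

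First I would reduce to the hypothesis of Theorem~\ref{LLN} by defining an appropriate sequence $(F_k)_{k\geq 1}$ on $\Sigma_m$. For each $k\geq 1$ write $k = i(k)q^{j(k)}$ with $q\nmid i(k)$, and set
$$
F_k(y) = F\bigl(y_{j(k)+1},\, y_{j(k)+2},\, \ldots,\, y_{j(k)+\ell}\bigr), \quad y\in\Sigma_m.
$$
Under the identification $\Lambda_{i(k)} \leftrightarrow \mathbb{N}^*$ via $i(k)q^h\mapsto h+1$, the coordinate $x_{kq^s} = x_{i(k)q^{j(k)+s}}$ becomes the $(j(k)+s+1)$-th coordinate of $x|_{\Lambda_{i(k)}}$, so
$$
F_k\bigl(x|_{\Lambda_{i(k)}}\bigr) = F(x_k, x_{kq}, \ldots, x_{kq^{\ell-1}}).
$$
Since $F$ is defined on a finite set, all $F_k$ are uniformly bounded, hence the covariance bound (\ref{condition2}) holds trivially with any $\eta\in(1,q^{3/2})$ and some constant $C$.

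Second, I would invoke Theorem~\ref{LLN} to conclude that, $\mathbb{P}_\mu$-a.e.,
$$
\lim_{n\to\infty}\frac{1}{n}\sum_{k=1}^n \Bigl(F(x_k,x_{kq},\ldots,x_{kq^{\ell-1}}) - \mathbb{E}_\mu F_k(x)\Bigr) = 0.
$$
(Here I also use the identity $\mathbb{E}_{\mathbb{P}_\mu}F_k(x|_{\Lambda_{i(k)}}) = \mathbb{E}_\mu F_k(x)$ from the last assertion of Lemma~\ref{LLN lemma 0}.) It then remains to evaluate the deterministic limit of $\frac{1}{n}\sum_{k=1}^n \mathbb{E}_\mu F_k$.

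Third, I would group the indices $k\in\{1,\ldots,n\}$ by the arithmetic progressions $\Lambda_i(n)$, exactly as in part~(2) of Lemma~\ref{lem dec}. For $i$ with $\sharp\Lambda_i(n)=s$, the values of $j(k)$ as $k$ ranges over $\Lambda_i(n)$ are $0,1,\ldots,s-1$, which gives
$$
\sum_{k=1}^n \mathbb{E}_\mu F_k = \sum_{s=1}^{\lfloor\log_q n\rfloor} N(n,q,s)\sum_{h=0}^{s-1}\mathbb{E}_\mu F(x_{h+1},\ldots,x_{h+\ell}).
$$
By part~(3) of Lemma~\ref{lem dec}, $\bigl|N(n,q,s)/n - (q-1)^2/q^{s+1}\bigr|\le 4/n$. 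Using $|\mathbb{E}_\mu F(x_{h+1},\ldots,x_{h+\ell})|\leq \|F\|_\infty$, the inner sum over $h$ is bounded by $s\|F\|_\infty$, so the error between the finite sum and the infinite series $\sum_{s\geq 1}\frac{(q-1)^2}{q^{s+1}}\sum_{h=0}^{s-1}\mathbb{E}_\mu F(x_{h+1},\ldots,x_{h+\ell})$ splits into a term of order $\frac{(\log_q n)^2}{n}$ (from the $4/n$ estimate) and a tail $O(\log_q n \cdot q^{-\log_q n})$, both vanishing as $n\to\infty$.

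There is no real obstacle here: the result is a corollary of the already-proved LLN, and the only bookkeeping is the indexing bijection $\Lambda_{i(k)}\leftrightarrow\mathbb{N}^*$ together with the routine dominated-convergence-style estimate above (which matches the statement up to the conventional shift in indexing the coordinates of $x$).
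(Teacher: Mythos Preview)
Your proposal is correct and follows essentially the same approach as the paper: define the sequence $F_k$ so that $F_k(x|_{\Lambda_{i(k)}})=F(x_k,\ldots,x_{kq^{\ell-1}})$, apply the LLN (Theorem~\ref{LLN}), then regroup $\sum_{k\le n}\E_\mu F_k$ via the partition in Lemma~\ref{lem dec} and pass to the limit using the estimate for $N(n,q,k)/n$. Your verification of the covariance hypothesis (boundedness suffices) and your explicit error bound for the tail are slightly more detailed than what the paper writes, and your remark about the index shift is simply the paper's tacit switch to $0$-based coordinates on each $S^{\Lambda_i}$.
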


\begin{proof}
For any integer $k$ we write  $k=i(k)q^j$  with $q\nmid i(k)$. Then
we define a function $F_{k}$ by
$$F_{k}(x)=F(x_j,\cdots,x_{j+\ell-1}).$$
Therefore we can re-write
$$
F(x_k,x_{kq},\cdots,x_{kq^{\ell-1}})=F_k(x_{|_{\Lambda_{i(k)}}}).
$$
By the law of large numbers, for $\P_{\mu_s}$ a.e. $x\in\Sigma_m$ we
have
$$
\lim_{n\to\infty}\frac{1}{n}\sum_{k=1}^nF_k(x_{|_{\Lambda_{i(k)}}})
=\lim_{n\to\infty}\frac{1}{n}\sum_{k=1}^n\E_{\mu}F_k(x)
$$
if the limit in the right hand side exists. The limit does exists.
In fact,
 by (2) of Lemma \ref{lem dec}, we have
$$\sum_{k=1}^n\E_{\mu}F_k(x)=\sum_{k=1}^{\lfloor\log_q n\rfloor}\sum_{\substack{\frac{n}{q^k} <i\leq \frac{n}{q^{k-1}}\\ q\nmid i}}\sum_{j=0}^{\sharp \Lambda_i(n)-1}\E_{\mu}F_{iq^j}(x).$$
By the definition of the sequence $(F_k)$, for any $k=iq^j$  with
$q\nmid i$ we have
$$\E_{\mu}F_{iq^j}(x)=\E_{\mu}F(x_j,\cdots,x_{j+\ell-1}),$$
which is independent of $i$.
 Combining the last two equations, we
get
$$\sum_{k=1}^n\E_{\mu}F_k(x)=\sum_{k=1}^{\lfloor\log_q n\rfloor}N(n,q,k)\sum_{j=0}^{k-1}\E_{\mu}F(x_j,\cdots,x_{j+\ell-1}),$$
where $N(n,q,k)$ appeared in Lemma \ref{lem dec}. Then, by (3) of Lemma \ref{lem dec}, we get

\begin{eqnarray*}
\lim_{n\to\infty}\frac{1}{n}\sum_{k=1}^n\E_{\mu}F_k(x) & = &\lim_{n\to\infty}\sum_{k=1}^{\lfloor\log_q n\rfloor}\frac{N(n,q,k)}{n}\sum_{j=0}^{k-1}\E_{\mu}F(x_j,\cdots,x_{j+\ell-1})\\
&=& (q-1)^2\sum_{k=1}^\infty\frac{1}{q^{k+1}}\sum_{j=0}^{k-1}\E_\mu F(x_j,\cdots,x_{j+\ell-1}).
\end{eqnarray*}

\end{proof}

\medskip
\medskip

\section{Dimensions of telescopic product measures \label{section2+}}

Let $\nu$ be a measure on $\Sigma_m$. The lower local dimension of $\nu$ at a point $x\in \Sigma_m$ is defined as
$$\underline{D}(\nu,x):=\liminf_{n\to\infty}\frac{-\log_m\nu([x_1^n])}{n}.$$
Similarly, we can define the upper local dimension $\overline{D}(\nu,x)$. If $\underline{D}(\nu,x)=\overline{D}(\nu,x)$, we write $D(\nu, x)$ for the common value and
we say that $\nu$ admits $D(\nu, x)$ as the exact local dimension at $x$.
See \cite{Fan1994} for the dimensions of measures. Recall that the Hausdorff dimension of a Borel measure $\nu$, denoted by $\dim_{H}\nu$, is the minimal dimension of  Borel sets of full measure
 and is equal to ${\rm ess \ sup}_\nu \underline{D}(\nu,x)$ (\cite{Fan1994}). 

In this section, as a consequence of the LLN,
we will prove that  every telescopic product measure $\mathbb{P}_\mu$ admits its exact local dimension for $\P_\mu$-a.e. point in $\Sigma_m$, which is a constant.

\subsection{Local dimension of telescopic product measures}

For a measure $\mu$ on $\Sigma_m$ and for $k\geq1$, we define $$H_k(\mu)=-\sum_{a_1,\cdots,a_k}\mu([a_1\cdots a_k])\log \mu([a_1\cdots a_k]).$$
We note that for a probability measure $\mu$ we have $0\le H_k(\mu)\le k\log m$.

\begin{theorem}\label{prop loc dim}
For $\P_\mu$-a.e. $x\in \Sigma_m$, we have
$$D(\P_\mu,x)=\frac{(q-1)^2}{\log m}\sum_{k=1}^{\infty}\frac{H_k(\mu)}{q^{k+1}}.$$
\end{theorem}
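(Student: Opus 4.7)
The plan is to rewrite $-\log \P_\mu([x_1^n])$ in the form $\sum_{k=1}^n \tilde F_k(x_{|_{\Lambda_{i(k)}}})$ and then invoke the general law of large numbers (Theorem \ref{LLN}). Starting from $-\log \P_\mu([x_1^n]) = \sum_{i\le n,\, q\nmid i}\bigl(-\log\mu([x_{|_{\Lambda_i(n)}}])\bigr)$ and telescoping each factor via
$$-\log \mu([y_1\cdots y_k]) = \sum_{j=0}^{k-1} h_j(y_1,\ldots,y_{j+1}), \qquad h_j := -\log\frac{\mu([y_1\cdots y_{j+1}])}{\mu([y_1\cdots y_j])}$$
(with the convention $\mu([\emptyset])=1$), then reindexing the resulting double sum over $(i,j)$ by $k=iq^j$ (so that $i=i(k)$ and $j=j(k)$ is the $q$-adic valuation of $k$), one obtains
$$-\log \P_\mu([x_1^n]) = \sum_{k=1}^n \tilde F_k(x_{|_{\Lambda_{i(k)}}}), \qquad \tilde F_k(y) := h_{j(k)}(y_1,\ldots,y_{j(k)+1}).$$

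Next I would verify the covariance hypothesis (\ref{condition2}) of Theorem \ref{LLN}. Since $\tilde F_{iq^j}$ depends only on $j$ (not on $i$, provided $q\nmid i$), it suffices by Cauchy--Schwarz to bound $\E_\mu h_j^2$ uniformly in $j$. The elementary inequality $x(\log x)^2 \le 4/e^2$ for $x\in(0,1]$, applied after conditioning on the first $j$ coordinates, gives $\E_\mu h_j^2 \le 4m/e^2$ independently of $j$. Hence (\ref{condition2}) holds with $C=4m/e^2$ and $\eta=1 < q^{3/2}$, and Theorem \ref{LLN} yields
$$\frac{1}{n}\sum_{k=1}^n \bigl(\tilde F_k(x_{|_{\Lambda_{i(k)}}}) - \E_\mu \tilde F_k\bigr) \longrightarrow 0 \qquad \P_\mu\text{-a.s.}$$

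It remains to evaluate the deterministic limit. A direct computation gives $\E_\mu h_j = H_{j+1}(\mu) - H_j(\mu)$, so grouping $k\le n$ by the value $j(k)=j$ and using $\sharp\{k\le n:\ j(k)=j\} = \lfloor n/q^j\rfloor - \lfloor n/q^{j+1}\rfloor$ together with the bound $H_{j+1}-H_j \le \log m$, dominated convergence and Abel summation (using $H_0=0$) give
$$\frac{1}{n}\sum_{k=1}^n \E_\mu \tilde F_k \longrightarrow \sum_{j=0}^\infty \frac{q-1}{q^{j+1}}(H_{j+1}-H_j) = (q-1)^2 \sum_{k=1}^\infty \frac{H_k(\mu)}{q^{k+1}}.$$
Dividing by $\log m$ then yields the claimed formula for $D(\P_\mu,x)$. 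The main obstacle is the $L^2$ bound in the second step: a naive application of Theorem \ref{LLN} directly to the functions $F_{iq^j}(y) := -\log\mu([y_1\cdots y_{j+1}])$ would fail because those quantities are not uniformly bounded in $L^2(\mu)$ (cylinder probabilities can be exponentially small). The telescoping into the conditional log-probabilities $h_j$ is precisely what produces $L^2$-bounded ingredients, via the universal bound $x(\log x)^2 \le 4/e^2$.
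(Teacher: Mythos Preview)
Your proof is correct and follows essentially the same strategy as the paper: the same telescoping into conditional log-probabilities, the same appeal to Theorem~\ref{LLN}, and an equivalent computation of the deterministic limit (you use Abel summation where the paper groups by $\sharp\Lambda_i(n)$ and invokes the asymptotics of $N(n,q,k)$).

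The one notable difference is in the $L^2$ estimate. The paper bounds $|h_j(y)|$ crudely by $|\log\mu([y_1\cdots y_{j+1}])|$ and then needs a separate combinatorial lemma (Lemma~\ref{lemma loc dim}) to the effect that $\sum_i p_i(\log p_i)^2 \le (\log n)^2 + D\log n$ for any probability vector in $\mathcal{P}_n$, obtaining only $\E_\mu h_j^2 = O(j^2)$. Your argument---condition on the first $j$ coordinates and apply $x(\log x)^2\le 4/e^2$ to the conditional probabilities $p_a = \mu([y_1\cdots y_j a])/\mu([y_1\cdots y_j])$---gives the uniform bound $\E_\mu h_j^2 \le 4m/e^2$ directly, which is both sharper and avoids the auxiliary lemma entirely. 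This is a genuine simplification of the paper's proof.
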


\begin{proof}
By the definition of $\P_\mu$, we have
\begin{equation}\label{loc dim 0}
\log\P_\mu([x_1^n])  = \sum_{i\leq n,q\nmid i}\log\mu([x_1^{n}{|_{\Lambda_i(n)}}])
 =   \sum_{k=1}^{\lfloor\log_q n\rfloor}\sum_{\substack{\frac{n}{q^k} <i\leq \frac{n}{q^{k-1}}\\ q\nmid i}} \log\mu([x_1^n{|_{\Lambda_i(n)}}]).
\end{equation}
Recall that $x_1^n{|_{\Lambda_i(n)}}=x_ix_{iq}x_{iq^2}\cdots x_{iq^{\sharp \Lambda_i -1}}$. So
$$
   \mu([x_1^n{|_{\Lambda_i(n)}}])=\mu([x_ix_{iq}x_{iq^2}\cdots x_{iq^{\sharp \Lambda_i -1}}]).
   $$
Let us write $\mu([x_1^n{|_{\Lambda_i(n)}}])$ in the following way
   $$ \mu([x_1^n{|_{\Lambda_i(n)}}])=\mu([x_i])\prod_{j=1}^{\sharp \Lambda_i -1}
     \frac{\mu([x_ix_{iq}x_{iq^2}\cdots x_{iq^j}])}{\mu([x_ix_{iq}x_{iq^2}\cdots x_{iq^{j -1}}])}.
$$
Now we define a suitable  sequence of functions $(F_k)_{k\ge 1}$ on $\Sigma_m$ in order to express $\mu([x_1^n{|_{\Lambda_i(n)}}])$.
 If $k=i$ such that $q\nmid i$, we define $$F_k(x)=F_i(x)=-\log\mu([x_0]).$$ If $k=iq^j$ with $q\nmid i$ and $j\geq 1$, we define $$F_k(x)=F_{iq^j}(x)=-\log\frac{\mu([x_0,x_1,\cdots,x_j])}{\mu([x_0,x_1,\cdots,x_{j-1}])}.$$
Then, we have the following relationship between $F_{k}$ and $\mu$.
$$
-\log\mu([x_{1}^n{|_{\Lambda_{i}}}]) =\sum_{k\in \Lambda_{i}(n)}F_{k}(x_{|_{\Lambda_{i}}}).
$$
 Substituting this expression into  (\ref{loc dim 0}) we obtain
\begin{equation}\label{loc dim 1}
-\log\P_\mu([x_1^n])=\sum_{k=1}^nF_k(x_{|_{\Lambda_{i(k)}}}).
\end{equation}
Now we check that  the sequence $(F_k)_{k\ge 1}$ verifies the hypothesis (\ref{condition2}) of the law of large numbers (Theorem~\ref{LLN}).
Notice that  for any $x\in\Sigma_m$ and any $j\ge 1$, we have
$$|F_{iq^j}(x)|=\left|\log\frac{\mu([x_0,x_1,\cdots,x_j])}{\mu([x_0,x_1,\cdots,x_{j-1}])}\right|\leq \left|\log\mu([x_0,x_1,\cdots,x_j])\right|.$$
This is because $\log \frac{x}{y}\le \log \frac{1}{x}$ when $0\le x\le y\le 1$.
So, for any $i\in\N^*$ with $q\nmid i$ and $j\geq 0$, we have
$$\E_{\mu}\left(F_{iq^j}(x)\right)^2\leq \sum_{x_0,\cdots,x_j\in S}\mu([x_0,x_1,\cdots,x_j])\left(\log\mu([x_0,x_1,\cdots,x_j])\right)^2.$$


Then by Lemma \ref{lemma loc dim} stated below, we obtain
$$\E_{\mu}\left(F_{iq^j}(x)\right)^2=O(j^2)$$
which implies through Cauchy-Schwarz inequality
$$\E_{\mu}\left|F_{iq^{j_1}}(x) F_{iq^{j_2}}(x)\right|= O((j_1+j_2)^2).$$
This quadratic estimate is more than the exponential estimate required by  the hypothesis (\ref{condition2}).
By the law of large numbers,  we have
\begin{equation}\label{4.2.3}
D(\mathbb{P}_\mu, x) =\frac{1}{\log m}\lim_{n\to\infty}\frac{1}{n }\sum_{j=1}^nF_j= \frac{1}{\log m}\lim_{n\to\infty}\frac{1}{n }\sum_{j=1}^n\E_{\mu}F_j \ \ \P_\mu{\rm -a.e.}
\end{equation}
if the limit in the right side hand exists.

This limit does exist. We are going to compute it.  By (2) of Lemma \ref{lem dec}, we have
\begin{equation}\label{4.2.3-1}
\sum_{k=1}^n\E_{\mu}F_k = \sum_{k=1}^{\lfloor\log_q n\rfloor}\sum_{\substack{\frac{n}{q^k} <i\leq \frac{n}{q^{k-1}}\\ q\nmid i}}\sum_{j=0}^{k-1}\E_{\mu}F_{iq^j}.\end{equation}
By the definition of the sequence $(F_k)_{k\ge 1}$, we have
$$
\sum_{j=0}^{k-1}F_{iq^j}(x)= -\log\mu([x_0,\cdots ,x_{k-1}])
$$
which implies immediately
$$
\sum_{j=0}^{k-1}\E_{\mu}F_{iq^j}=-\E_{\mu}\log\mu([x_0,\cdots ,x_{k-1}])=H_k(\mu).
$$
Then substituting this into (\ref{4.2.3-1}) we get
$$\sum_{k=1}^n\E_{\mu}F_k=\sum_{k=1}^{\lfloor\log_q n\rfloor}\sum_{\substack{\frac{n}{q^k} <i\leq \frac{n}{q^{k-1}}\\ q\nmid i}}H_{k}(\mu)=\sum_{k=1}^{\lfloor\log_q n\rfloor}N(n,q,k)H_k(\mu)$$
where $N(n,q,k)$ is the number of $i$'s such that $\frac{n}{q^k} <i\leq \frac{n}{q^{k-1}}$ and $ q\nmid i$. So, by (3) of Lemma \ref{lem dec}, we obtain
$$\lim_{n\to\infty}\frac{1}{n}\sum_{k=1}^n\E_{\mu}F_k=\lim_{n\to\infty}\sum_{k=1}^{\lfloor\log_q n\rfloor}\frac{N(n,q,k)}{n}H_k(\mu) = (q-1)^2\sum_{k=1}^{\infty}\frac{H_k(\mu)}{q^{k+1}}<\infty.$$

\end{proof}


\begin{remark}
Even if the measure $\mu$ itself is not exact dimensional the telescopic measure $\P_\mu$ is. This is because the $\P_\mu$-measure of a cylinder of length $N$ is governed by the measure $\mu$ on short pieces $\Lambda_i(N)$ while the non-exactness of $\mu$ can be seen only on long cylinders. These short pieces are independent.
\end{remark}

\subsection{An elementary inequality}
In the last proof we have used the following elementary estimation.
For $n\ge 1$, let $$
\mathcal{P}_n:=\left\{p=(p_1,\cdots,p_n)\in \R_+^n,\sum_{i=1}^np_i=1\right\}$$
 be the set of probability vectors. We define $L_n: \ \mathcal{P}_n\longrightarrow \R^+$ by $$L_n(p)=\sum_{i=1}^np_i(\log p_i)^2.$$

\begin{lemma}\label{lemma loc dim}
There exists a constant $D>0$ such that $$\max_{p\in \mathcal{P}_n}L_n(p)\leq (\log n)^2+D \log n.$$
\end{lemma}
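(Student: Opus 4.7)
The plan is to apply Jensen's inequality to an auxiliary concave function on $[0,1]$. The obstruction to applying Jensen directly to $\phi(x) = x(\log x)^2$ is that $\phi''(x) = 2(\log x + 1)/x$ changes sign at $x = e^{-1}$, so $\phi$ is neither concave nor convex on $(0,1]$. The idea is to tilt $\phi$ by subtracting a term that (a) makes the resulting function concave, and (b) contributes something one can absorb cleanly into the final estimate.

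Concretely, I would introduce
$$\psi(x) = x(\log x)^2 - 2 x\log x \quad (x \in (0,1]), \qquad \psi(0) = 0.$$
A short computation gives $\psi'(x) = (\log x)^2 - 2$ and $\psi''(x) = (2\log x)/x$, which is $\le 0$ on $(0,1]$, so $\psi$ is concave on $[0,1]$ (the extension to $0$ being consistent since $\psi(tx) \ge t\psi(x)$ for $t, x \in [0,1]$ follows by direct inspection). Jensen's inequality applied with uniform weights $1/n$, using $\frac{1}{n}\sum_{i=1}^n p_i = 1/n$, then yields
$$\frac{1}{n}\sum_{i=1}^n \psi(p_i) \le \psi\!\left(\frac{1}{n}\right) = \frac{1}{n}\bigl[(\log n)^2 + 2\log n\bigr],$$
hence $\sum_{i=1}^n \psi(p_i) \le (\log n)^2 + 2\log n$.

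Unpacking the definition of $\psi$,
$$\sum_{i=1}^n \psi(p_i) = L_n(p) + 2H(p), \qquad H(p) := -\sum_{i=1}^n p_i \log p_i \ge 0,$$
so $L_n(p) \le (\log n)^2 + 2\log n - 2H(p) \le (\log n)^2 + 2\log n$, which gives the lemma with $D = 2$.

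The only genuinely creative step, and the place I would flag as the heart of the argument, is the choice of the tilt coefficient $2$ in the definition of $\psi$: it is the smallest coefficient that forces $\psi'' \le 0$ throughout $(0,1]$, and it is chosen so that the subtracted quantity $\sum 2 p_i \log p_i = -2H(p)$ is automatically $\le 0$ and can therefore simply be dropped. Everything else is a routine convexity computation; no Lagrange multiplier analysis or case split on $n$ is needed.
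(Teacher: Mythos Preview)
Your proof is correct and considerably more efficient than the paper's. The paper proceeds by induction on $n$: it handles the base case $n=2$ by the crude bound $L_2(p)\le 8e^{-2}$, then for the inductive step locates the maximizer of $L_{N+1}$ either on the boundary of $\mathcal P_{N+1}$ (where one coordinate vanishes and the induction hypothesis applies) or in the interior, where a Lagrange-multiplier argument forces all coordinates to take one of two values $a,b$; the resulting expression is then bounded by expanding $ka(\log a)^2=ka(\log ka-\log k)^2$ and using the elementary bounds $\max_{[0,1]}(-x\log x)=e^{-1}$ and $\max_{[0,1]}x(\log x)^2=4e^{-2}$. This yields the constant $D=8e^{-2}+4e^{-1}\approx 2.55$.

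Your approach bypasses all of this by a single Jensen step: tilting $x(\log x)^2$ by $-2x\log x$ produces a globally concave function $\psi$ on $[0,1]$, and the subtracted term is exactly $2H(p)\ge 0$, which can be discarded. You obtain the sharper constant $D=2$ with no induction, no case analysis, and no multiplier computation. The paper's route has the minor advantage of being ``hands-on'' (one sees explicitly what the extremal probability vectors look like), but for the purpose at hand---an $O((\log n)^2)$ bound feeding into a covariance estimate---your argument is strictly preferable.
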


\begin{proof}
The function $x \mapsto x(\log x)^2$ is bounded on $[0,1]$ and attains its maximal values $4e^{-2}$ at $x=e^{-2}$. 
Hence the inequality holds for $n=2$ with $D = 8e^{-2}$. Now we prove the inequality by induction on $n$.
 Suppose that the inequality holds for $n\leq N$. Let $p\in\mathcal{P}_{N+1}$ be a maximal point of $L_{N+1}$.
  If $p$ is on the boundary of $\mathcal{P}_{N+1}$, then there exists at least one component $p_{i_0}$ of $p$ such that $p_{i_0}=0$. So $$L_{N+1}(p)=\sum_{1\leq i\leq N+1, i\neq i_0}p_i(\log p_i)^2=L_N(p')$$  where
  $p'=(p_1,\cdots,p_{i_0-1},p_{i_0+1},\cdots,p_{N+1})$ is in $\mathcal{P}_N$. In this case, we can conclude by the hypothesis of
   induction.
Now we suppose that $p$ is not on the boundary of $\mathcal{P}_{N+1}$. We use the method of Lagrange multiplier. Differentiating $L_{N+1}(p)$ yields
$$\frac{\partial L_{N+1}}{\partial p_i}(p)=(\log p_i)^2+2\log p_i,\ \ (1\leq i\leq N+1).$$
So we have
\begin{equation}\label{lemma loc dim1}
(\log p_i)^2+2\log p_i=\lambda,\ \ (1\leq i\leq N+1)
\end{equation}
for some real number $\lambda$.
  Let $a,b$ be the two solutions of the equation $$(\log x)^2+2\log x=\lambda.$$
The components of the maximal point $p=(p_1,\cdots ,p_{N+1})$ have two choices: $a$ or $b$. So
\begin{equation}\label{lemma loc dim3}
L_{N+1}(p)=ka(\log a)^2+(N+1-k)b(\log b)^2,
\end{equation}
where $k$ ($0\leq k\leq N+1$) is the number of $a$'s taken by the components of $p$.  Recall that $ka+(N+1-k)b=1$.
Notice that
\begin{equation}\label{lemma loc dim2}
ka(\log a)^2=ka(\log ka-\log k)^2=ka(\log ka)^2+ka (\log k)^2-2ka (\log ka)\log k.
\end{equation}
Since $\max_{x\in [0,1]}-x\log x=\frac{1}{e}$ and $\max_{x\in [0,1]}x(\log x)^2=\frac{4}{e^2}$,
 we get
$$ka(\log a)^2\leq \frac{4}{e^2}+ka (\log k)^2+\frac{2}{e}\log k\le \frac{4}{e^2}+ka (\log (N+1))^2+\frac{2}{e}\log (N+1).$$
A similar estimate holds for $(N+1-k)b(\log b)^2$. Put these two estimates into  (\ref{lemma loc dim3}), we get
$$
P_{N+1}(p) \le \frac{8}{e^2}
+ (\log (N+1))^2 + \frac{4}{e} \log(N+1).$$
We conclude that the inequality holds with $D= \frac{8}{e^2} + \frac{4}{e}$.
\end{proof}

\section{Non-linear transfer equation}\label{section nonlinear equa}
Our study of $A_n\varphi(x)$ will depend upon a class of special telescopic product measures $\mathbb{P}_\mu$
where $\mu$ is a $(\ell-1)$-Markov measure. Our $(\ell-1)$-Markov measures are nothing but  Markov measures
with $S^{\ell}$ as state space.  The transition probability of such a $(\ell-1)$-Markov measure will be determined by the solution of a non-linear
transfer equation. In this section, we will study this non-linear
transfer equation, find its positive solution and construct the $(\ell-1)$-Markov measure and the corresponding telescopic
product measure.

\subsection{Non-linear transfer equation}
 Let $\mathcal{F}(S^{\ell-1}, \mathbb{R}^+)$ denote the cone of functions defined on $S^{\ell-1}$ taking non-negative real values.
 It is identified with a subset in the Euclidean space $\mathbb{R}^{m^{\ell-1}}$.
Let $A: S^{\ell} \to \R^+$ be a given function. We define a non-linear operator $\mathcal{N}: \mathcal{F}(S^{\ell-1}, \mathbb{R}^+) \to \mathcal{F}(S^{\ell-1}, \mathbb{R}^+)$ by
\begin{equation}\label{non-linear transfer operator}
\mathcal{N}y(a_1, a_2, \cdots, a_{\ell-1})
= \left(\sum_{j \in S} A(a_1,a_2, \cdots, a_{\ell-1}, j)
y(a_2, \cdots,a_{\ell-1}, j)\right)^{\frac{1}{q}}.
\end{equation}


We are interested in positive fixed points of the operator $\mathcal{N}$. That means we are interested in $y\in \mathcal{F}(S^{\ell-1}, \mathbb{R}^+)$
 such that $\mathcal{N}y=y$ and $y(a)>0$ for all $a\in S^{\ell-1}$. In general, such fixed points of $\mathcal{N}$ may not exist.
 If $\mathcal{N}$ admits a positive fixed point, then for each $(a_1,\cdots,a_{\ell-1})\in S^{\ell-1}$, there exists at least one $j\in S$ such that $A(a_1,\cdots,a_{\ell-1},j)$  is strictly positive. In fact, this is also a sufficient condition.

\begin{theorem}\label{existence-unicity trans-equ}
Suppose that $A$ is non-negative and that for every $(a_1,\cdots, a_{\ell-1})\in S^{\ell-1}$ there exists at least one $j\in S$ such that $A(a_1,\cdots,\\ a_{\ell-1}, j)>0$. Then $\mathcal{N}$ has a unique positive fixed point.
\end{theorem}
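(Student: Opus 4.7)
The plan is to linearize the equation $\mathcal{N}y = y$ by passing to logarithms and to show that the resulting operator is a genuine Banach contraction, so that Banach's fixed point theorem furnishes both existence and uniqueness in one stroke. Concretely, for a strictly positive $y: S^{\ell-1}\to\R_+^*$ set $u = \log y$; then the fixed point equation becomes $u = \Phi(u)$ where
\[
\Phi(u)(a_1,\dots,a_{\ell-1}) \;=\; \frac{1}{q}\log\sum_{j\in S} A(a_1,\dots,a_{\ell-1},j)\,\exp\bigl(u(a_2,\dots,a_{\ell-1},j)\bigr).
\]
The hypothesis that for every $a\in S^{\ell-1}$ some $A(a,j)>0$ guarantees that the inner sum is strictly positive for every $u\in\R^{m^{\ell-1}}$, so $\Phi$ is a well-defined continuous map of the finite-dimensional space $\R^{m^{\ell-1}}$ into itself.

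The core computation is to verify the contraction estimate $\|\Phi(u)-\Phi(v)\|_\infty \le \frac{1}{q}\|u-v\|_\infty$. If $\delta := \|u-v\|_\infty$, then $e^{-\delta}e^{v(b)} \le e^{u(b)} \le e^{\delta}e^{v(b)}$ for every $b\in S^{\ell-1}$; multiplying by the non-negative weights $A(a,\cdot)$ and summing preserves the two-sided bound, and taking logarithms gives $\bigl|\log\sum_j A(a,j)e^{u(Ta,j)} - \log\sum_j A(a,j)e^{v(Ta,j)}\bigr| \le \delta$. Dividing by $q$ yields exactly the announced contraction factor $1/q < 1$.

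Since $(\R^{m^{\ell-1}},\|\cdot\|_\infty)$ is complete, Banach's fixed point theorem produces a unique $u_*\in\R^{m^{\ell-1}}$ with $\Phi(u_*) = u_*$, and $\psi := e^{u_*}$ is then the unique \emph{strictly} positive fixed point of $\mathcal{N}$ in $\mathcal{F}(S^{\ell-1},\R^+)$. (As noted in the paper, any fixed point taking the value zero at some $a$ is excluded by the definition of ``positive'' used here, so no separate argument is required to rule out degenerate fixed points.)

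I do not expect a serious obstacle: the nonlinearity was built precisely so that the extra $1/q$-th power turns the log-conjugated operator into a contraction (this is the log-sum-exp Lipschitz estimate, familiar from max-plus/thermodynamic formalism). The only point that deserves a line of care is the verification that the sum under the logarithm never vanishes, which is where the hypothesis on $A$ is used. Everything else is a direct application of Banach's theorem in a finite-dimensional Banach space.
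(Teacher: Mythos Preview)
Your proof is correct and takes a genuinely different route from the paper's. The paper treats existence and uniqueness separately: for uniqueness it exploits the homogeneity $\mathcal{N}(cy)=c^{1/q}\mathcal{N}(y)$ together with monotonicity (if $y_1,y_2$ are fixed points and $\xi$ is the least scalar with $y_1\le\xi y_2$, then $y_1\le\xi^{1/q}y_2$, a contradiction); for existence it shows that $\mathcal{N}$ preserves the order interval $\mathcal{F}(S^{\ell-1},[\theta_1,\theta_2])$ with $\theta_1=(\min A)^{1/(q-1)}$, $\theta_2=(m\max A)^{1/(q-1)}$, and then iterates monotonically to a limit. Your log-conjugation turns the same two ingredients (monotonicity plus $1/q$-homogeneity) into a single $\|\cdot\|_\infty$-contraction with ratio $1/q$, so Banach's theorem delivers existence and uniqueness at once. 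This is more economical and, incidentally, sidesteps the small care needed in the paper's existence argument when $\min A=0$ (so that $\theta_1=0$). On the other hand, the paper's order-theoretic viewpoint makes the monotone convergence $\mathcal{N}^n(\underline{1})\to\psi$ transparent, and this explicit approximation $\psi_{s,n}=\mathcal{N}_s^n(\underline{1})$ is used later in the convexity proof (Section~4.3); your contraction of course also yields $\mathcal{N}^n(\underline{1})\to\psi$, just without the monotonicity.
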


\begin{proof}
We define a partial order on $\mathcal{F}(S^{\ell-1},\R^+)$, denoted by $\leq$, as follows:
$$y_1\leq y_2\ \Leftrightarrow\ y_1(a)\leq y_2(a), \ \forall a\in S^{\ell-1}.$$
It is obvious that $\mathcal{N}$ is increasing with respect to this partial order, i.e., $$y_1\leq y_2\Rightarrow \mathcal{N}(y_1)\leq \mathcal{N}(y_2).$$\\
\indent {\em Uniqueness.}\
We first prove the uniqueness of the positive fixed point by contradiction. Suppose that there are two distinct positive fixed points $y_1$ and $y_2$ for $\mathcal{N}$. Without loss of generality we can suppose that $y_1\nleq y_2$. Let $$\xi=\inf\{\gamma >1,\  y_1\leq \gamma y_2\}.$$
It is clear that $\xi$ is a well defined real number and $y_1\leq \xi y_2$. Since $y_1\nleq y_2$, we must have $\xi>1$.
On the other hand, by the definition of $\mathcal{N}$, the operator $\mathcal{N}$ is homogeneous in the sense that $$\mathcal{N}(cy)=c^{\frac{1}{q}}\mathcal{N}(y),\ \forall y \in \mathcal{F}(S^{\ell-1},\R^+),\ \forall c\in\R^+.$$
It follows that $$y_1=\mathcal{N}(y_1)\leq \mathcal{N}(\xi y_2)=\xi^{\frac{1}{q}}\mathcal{N}( y_2)=\xi^{\frac{1}{q}}y_2.$$
This is a contradiction to the minimality of $\xi$ for $\xi^{\frac{1}{q}}<\xi$.\\
\indent {\em Existence.}\ Now
we  prove the existence. Let
$$\theta_1=\left(\min_{a\in S^{\ell}}A(a)\right)^{\frac{1}{q-1}},\ \ \theta_2=\left(m \max_{a\in S^{\ell}}A(a)\right)^{\frac{1}{q-1}}.$$
Consider the restriction of $\mathcal{N}$ on the compact set $\mathcal{F}(S^{\ell-1},[\theta_1,\theta_2])$ consisting of functions on $S^{\ell-1}$ taking values in $[\theta_1,\theta_2]$. By the definitions of $\theta_1$ and $\theta_2$,  the compact set $\mathcal{F}(S^{\ell-1},[\theta_1,\theta_2])$
is $\mathcal{N}$-invariant, i.e.,  $$\mathcal{N}\left(\mathcal{F}(S^{\ell-1},[\theta_1,\theta_2])\right)\subset \mathcal{F}(S^{\ell-1},[\theta_1,\theta_2]).$$
In fact, let $y \in \mathcal{F}(S^{\ell-1},[\theta_1,\theta_2])$ and let $y_{j_0} = \min_j y_j$. Then $y_{j_0}\ge \theta_1$ and $A(a, j_0) \ge \theta_1^{q-1}$ 
for all $a \in S^{\ell-1}$, so that
$$
\mathcal{N} y (a)\ge (A(a, j_0)y_{j_0})^{1/q} \ge \theta_1.$$
The verification of $\mathcal{N} y (a) \le \theta_2$ is even easier.

Now take any function $y_0$ from the compact set $\mathcal{F}(S^{\ell-1},[\theta_1,\theta_2])$.  By the monotonicity of $\mathcal{N}$, we get 
an increasing sequence
$$y_0\leq \mathcal{N}(y_0)\leq \mathcal{N}^2(y_0)\leq \cdots .$$
Since $\mathcal{F}(S^{\ell-1},[\theta_1,\theta_2])$ is compact, the limit
$g=\lim_{n\to\infty}\mathcal{N}^n(y_0)$ exists. It is a fixed point of $\mathcal{N}$.
\end{proof}

From now on, we concentrate on the following special case:
$$ A(a)=e^{s \varphi(a)},\ \ (a\in S^{\ell})$$
where $s\in \mathbb{R}$ is a parameter.
The corresponding operator will be denoted by  $\mathcal{N}_{s}$.
By Theorem \ref{existence-unicity trans-equ}, there exists a unique positive fixed point for $\mathcal{N}_s$. We denote this fixed point by $\psi_s$.
In the following, we are going to study the analyticity and the convexity of the functions $s \mapsto \psi_s(a)$.

\subsection{Analyticity of $s \mapsto \psi_s(a)$}
\begin{proposition}\label{analyticity}
For every $a\in  S^{\ell-1}$, the function $s\to \psi_s(a)$
is analytic on $\R$.
\end{proposition}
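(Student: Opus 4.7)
The plan is to apply the analytic implicit function theorem to the polynomial form of the fixed point equation. Raising the identity $\mathcal{N}_s \psi_s = \psi_s$ to the $q$-th power gives $\psi_s^q = \mathcal{L}_s \psi_s$, so I set
\[
F : \mathbb{R} \times \mathbb{R}^{m^{\ell-1}} \to \mathbb{R}^{m^{\ell-1}}, \qquad
F(s,\psi)(a) = \psi(a)^q - \sum_{j \in S} e^{s \varphi(a,j)}\, \psi(Ta, j).
\]
The map $F$ is jointly real-analytic in $(s,\psi)$ since it is polynomial in the components of $\psi$ with exponential-in-$s$ coefficients, and by construction $F(s,\psi_s) = 0$ for every $s \in \mathbb{R}$. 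Working with this polynomial form rather than directly with $\mathcal{N}_s$ avoids the non-smoothness of the $q$-th root near $0$.

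The crucial step is to verify that the partial Jacobian $D_\psi F(s_0, \psi_{s_0})$ is invertible for every $s_0 \in \mathbb{R}$. Differentiating,
\[
D_\psi F(s_0, \psi_{s_0})\eta(a) = q\,\psi_{s_0}(a)^{q-1}\eta(a) - \sum_{j \in S} e^{s_0 \varphi(a,j)}\, \eta(Ta, j).
\]
I would prove injectivity by the following renormalisation trick: set $\tilde\eta(a) := \eta(a)/\psi_{s_0}(a)$, which is well defined since $\psi_{s_0}$ is strictly positive, and use the fixed-point identity $\psi_{s_0}(a)^q = \sum_j e^{s_0\varphi(a,j)}\psi_{s_0}(Ta,j)$ to rewrite the equation $D_\psi F(s_0,\psi_{s_0})\eta = 0$ as
\[
q\,\tilde\eta(a) = \sum_{j \in S} p_j(a)\,\tilde\eta(Ta, j), \qquad p_j(a) := \frac{e^{s_0 \varphi(a,j)}\psi_{s_0}(Ta,j)}{\psi_{s_0}(a)^q},
\]
where the weights $p_j(a) \ge 0$ satisfy $\sum_{j \in S} p_j(a) = 1$. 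Taking absolute values and the supremum over $a \in S^{\ell-1}$ yields $q\,\|\tilde\eta\|_\infty \le \|\tilde\eta\|_\infty$, and since $q \ge 2$ this forces $\tilde\eta \equiv 0$, hence $\eta \equiv 0$. In the finite-dimensional space $\mathbb{R}^{m^{\ell-1}}$, injectivity is equivalent to invertibility.

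By the analytic implicit function theorem there exists a neighbourhood $U$ of $s_0$ and a real-analytic curve $s \mapsto \widetilde\psi_s$ on $U$ with $F(s,\widetilde\psi_s) = 0$ and $\widetilde\psi_{s_0} = \psi_{s_0}$. By continuity, $\widetilde\psi_s$ remains strictly positive for $s$ close to $s_0$, so taking $q$-th roots of the identity $\widetilde\psi_s^q = \mathcal{L}_s \widetilde\psi_s$ shows that $\widetilde\psi_s$ is a strictly positive fixed point of $\mathcal{N}_s$; the uniqueness part of Theorem \ref{existence-unicity trans-equ} then identifies $\widetilde\psi_s$ with $\psi_s$ on $U$. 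Since $s_0$ was arbitrary, $s \mapsto \psi_s(a)$ is analytic on all of $\mathbb{R}$ for every $a$. The main obstacle is the invertibility of $D_\psi F(s_0, \psi_{s_0})$; the resolution exploits two features simultaneously — the positivity of $\psi_{s_0}$ (which allows the change of variable $\tilde\eta = \eta/\psi_{s_0}$) and the stochastic structure hidden in the fixed-point equation (which turns the linearised equation into a contraction by the factor $1/q$).
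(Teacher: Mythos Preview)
Your proof is correct and follows the same overall strategy as the paper: both apply the analytic implicit function theorem to the polynomial map $F(s,\psi)(a)=\psi(a)^q-\sum_j e^{s\varphi(a,j)}\psi(Ta,j)$, and both reduce the problem to showing that $D_\psi F(s_0,\psi_{s_0})$ is invertible. The only difference is in how that invertibility is established. The paper multiplies the $b$-th column of the Jacobian by $\psi_{s_0}(b)$ and verifies that the resulting matrix is strictly row diagonally dominant (the margin being exactly $(q-1)\psi_{s_0}(a)^q>0$), then invokes the Gershgorin circle theorem. Your argument is the kernel-side version of the same computation: the substitution $\tilde\eta=\eta/\psi_{s_0}$ plays the role of the column scaling, and your inequality $q\|\tilde\eta\|_\infty\le\|\tilde\eta\|_\infty$ is precisely the diagonal-dominance inequality rephrased as a contraction. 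Your route is marginally more self-contained since it avoids naming Gershgorin, while the paper's matrix formulation makes the structure of the Jacobian more explicit; mathematically they are equivalent.
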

\begin{proof}
We consider the map $G: \R\times \R_+^{*m^{\ell-1}}\to \R^{m^{\ell-1}}$ defined by
$$G(s,(z_a)_{a\in S^{\ell-1}})=\left(G_{b}(s,(z_a)_{a\in S^{\ell-1}})\right)_{b\in S^{\ell-1}},$$
where $$G_{(b_1,\cdots,b_{\ell-1})}(s,(z_a)_{a\in S^{\ell-1}})=z_{(b_1,\cdots,b_{\ell-1})}^q-\sum_{j\in S}e^{s\varphi(b_1,\cdots,b_{\ell-1},j)}z_{(b_2,\cdots,b_{\ell-1},j)}.$$
It is clear that $G$ is analytic. By Theorem \ref{existence-unicity trans-equ}, we have
$$G(s,(\psi_s(a))_{a\in S^{\ell-1}})=0.$$
Moreover the uniqueness in Theorem \ref{existence-unicity trans-equ} implies that for any fixed $s\in \R$, $(\psi_s(a))_{a\in S^{\ell-1}}$ is the unique positive vector satisfying the above equation. For practice, in the following we will write $\underline{\psi_s}=(\psi_s(a))_{a\in S^{\ell-1}}$ and $\underline{z}=(z_a)_{a\in S^{\ell-1}}$.

By the implicit function theorem, if the Jacobian matrix
$$D(s)=\left(\frac{\partial G_{a}}{\partial z_b}(s,\underline{\psi_s})\right)_{(a,b)\in S^{\ell-1}\times S^{\ell-1}}$$ is invertible on a point $s_0\in \R$, then there exist a neighbourhood $(s_0-r_0,s_0+r_0)$ of $s_0$, a neighbourhood $V$ of $\underline{\psi_s}$ in $\R^{m^{\ell-1}}$ and a analytic function $f$ on $(s_0-r_0,s_0+r_0)$ taking values in $V$
such that for any $(t,\underline{z})\subset (s_0-r_0,s_0+r_0)\times V$, we have
$$G(t,\underline{z})=0\ \Leftrightarrow\ f(t)=\underline{z}.$$ Then by the uniqueness of $\psi_s$ for fixed $s$, we have  $\underline{\psi_t}=f(t)
$. 
So the functions $s\to \psi_s(a)$ $(a\in S^{\ell-1})$, which are coordinate functions of $f$, are analytic in $(s_0-r_0,s_0+r_0)$.

We now prove that the matrix $D(s)$ is invertible for any $s\in \R$. To this end, we consider the following matrix
$$\widetilde{D}(s)=\left(\psi_s(b) \frac{\partial G_{a}}{\partial z_b}(s,\underline{\psi_s}) \right)_{(a,b)\in S^{\ell-1}\times S^{\ell-1}},
$$ which is the one obtained by multiplying the $b$-th column of $D(s)$ by $\psi_s(b)$ for each $b\in S^{\ell-1}$.
Then we have the following relation between the determinants of $D(s)$ and $\widetilde{D}(s)$:
$$\det(\widetilde{D}(s))=\left(\prod_{a\in S^{\ell-1}}\psi_s(a)\right)\det(D(s)).$$
So we only need to prove that $\widetilde{D}(s)$ is invertible. We will prove this by showing that $\widetilde{D}(s)$ is strictly diagonal dominating and by
applying the Gershgorin circle theorem (also called Levy-Desplanques Theorem) (see e.g. \cite{Varga}). Recall that  a matrix is said to be strictly diagonal dominating if for every row of the matrix, the modulus
 of the diagonal entry in the row is strictly larger than the sum of the modulus of all the other (non-diagonal) entries in that row.

Let $a=(a_1,\cdots,a_{\ell-1})$ be fixed. The function $G_a(s, \cdot)$ depends only on $z_a$ and $z_b$'s with $b=(a_2,\cdots,a_{\ell-1},j)$. So
$$\frac{\partial G_a}{\partial z_b}(s,\underline{\psi_s})\neq 0$$ only if $b=a$ or $b=(a_2,\cdots,a_{\ell-1},j)$ for some $j\in S$.
It is possible that $a= (a_2,\cdots,a_{\ell-1},j)$ for some $j\in S$ and it is actually the case if and only if $a = (j, j, \cdots, j)$.
To effectively apply the implicit function theorem, we only need to show that for any $a=(a_1,\cdots,a_{\ell-1})$, we have
\begin{equation}\label{analyticity 1}
\left|\psi_s(a)\frac{\partial G_{a}}{\partial z_a}(s,\underline{\psi_s})\right|- \sum_{\substack{j\in S, \\
b=(a_2,\cdots,a_{\ell-1},j)\neq a} }\left|\psi_s(b)\frac{\partial G_{a}}{\partial z_b}(s,\underline{\psi_s})\right|>0.
\end{equation}
In fact, we have
$$
\frac{\partial G_{a}}{\partial z_a}(s,\underline{\psi_s}) = \left\{ \begin{array}{ll}
q\psi_s^{q-1}(a)-e^{s\varphi(a,j)} & \textrm{ if } a=(j,\cdots ,j) \textrm{ for some } j\in S,\\
q\psi_s^{q-1}(a) & \textrm{otherwise.}
\end{array} \right.
$$
and for $b=(a_2,\cdots,a_{\ell-1},j)\neq a$, we have
$$\frac{\partial G_{a}}{\partial z_b}(s,\underline{\psi_s})=e^{s\varphi(a,j)}.$$
Then, substituting the last two expressions into (\ref{analyticity 1}), we obtain that  the member at the left hand side of (\ref{analyticity 1}) is equal to
$$q\psi_s^{q}(a)-\sum_{j\in S}e^{s\varphi(a,j)}\psi_s(a_2,\cdots,a_{\ell-1},j)=(q-1)\psi_s^{q}(a)>0.$$
For the last equality we have used the fact that $\psi_s$ is the solution of $\mathcal{N}_s \psi_s = \phi_s$.
\end{proof}

Our function $\psi_s$ is defined on $S^{\ell-1}$. We extend it on $S^{k}$ for all $1\le k \le \ell-2$ by induction on $k$ as follows
$$\psi_s(a)=\left(\sum_{j\in S}\psi_s(a,j)\right)^{\frac{1}{q}},\quad (\forall  a\in  S^k).$$
It is clear that all these functions $\psi_s$ are  strictly positive for all $s\in \R$.

\begin{corollary}
For any $a\in \bigcup_{1\leq k\leq \ell-1}S^k$, the function $s\to \psi_s(a)$ is analytic on $\R$.
\end{corollary}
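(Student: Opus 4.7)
The plan is a straightforward downward induction on $k$, starting from the base case $k=\ell-1$ which is exactly the content of Proposition \ref{analyticity}. The point is that the extension formula
$$
    \psi_s(a) = \left(\sum_{j\in S}\psi_s(a,j)\right)^{1/q}, \qquad a\in S^k,
$$
builds $\psi_s$ on $S^k$ out of the values of $\psi_s$ on $S^{k+1}$ using only two operations: a finite sum, and raising to the fixed power $1/q$.

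Concretely, assume as induction hypothesis that for every $b\in S^{k+1}$ the map $s\mapsto \psi_s(b)$ is real-analytic on $\R$ and strictly positive (the strict positivity is already noted right before the corollary). Fix $a\in S^k$. Then $s\mapsto \sum_{j\in S}\psi_s(a,j)$ is a finite sum of real-analytic functions, hence real-analytic, and it is strictly positive since each summand is. Composing with the function $t\mapsto t^{1/q}$, which is real-analytic on $(0,\infty)$, shows that $s\mapsto \psi_s(a)$ is real-analytic on $\R$ and again strictly positive. This advances the induction from level $k+1$ to level $k$, and iterating from $k=\ell-2$ down to $k=1$ proves analyticity for every $a\in \bigcup_{1\le k\le \ell-1}S^k$.

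There is essentially no obstacle: the only thing to be slightly careful about is that $t\mapsto t^{1/q}$ fails to be analytic at $t=0$, so one really does need the strict positivity of $\sum_{j\in S}\psi_s(a,j)$ at every $s\in\R$. This is guaranteed inductively because the positivity of $\psi_s$ on $S^{\ell-1}$ (established in Theorem \ref{existence-unicity trans-equ}) is propagated to lower levels by the defining formula, as already observed in the paragraph preceding the corollary.
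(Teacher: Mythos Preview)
Your argument is correct and is exactly the reasoning the paper has in mind: the corollary is stated without proof in the paper, immediately after the observation that the extended $\psi_s$ are strictly positive, precisely because the downward induction you spell out (finite sum of analytic functions, then composition with $t\mapsto t^{1/q}$ on $(0,\infty)$) is the intended justification.
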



\bigskip
\bigskip

\subsection{Convexity of $s \mapsto \psi_s(a)$}
In this subsection, we prove that the functions $s\to \psi_s(a)$ for
$a\in \bigcup_{1\leq k\leq \ell-1}S^k$ and the pressure function
$\pv(s)$ are convex functions on $\R$.

The following
lemma is nothing but the Cauchy-Schwarz inequality. We will use it in this form several times in the proof
of the convexity.

\begin{lemma}\label{lemma convexity 1}
Let $(a_j)_{j=0}^{m-1}$ and $(b_j)_{j=0}^{m-1}$ be two sequences of non-negative real numbers. Then
$$
\left(\sum_{j=0}^{m-1}a_jb_j\right)^2 \le \left(\sum_{j=0}^{m-1}a_j b_j^2\right)\left( \sum_{j=0}^{m-1}a_j\right).$$
\end{lemma}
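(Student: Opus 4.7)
The plan is to recognize this inequality as a rewriting of the classical Cauchy--Schwarz inequality applied to suitably chosen vectors. Since all the $a_j$ are non-negative, the quantities $\sqrt{a_j}$ are real, which makes the substitution below valid.

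The key step will be to set $u_j = \sqrt{a_j}$ and $v_j = \sqrt{a_j}\, b_j$ for $j=0, 1, \dots, m-1$. With this choice, one computes directly that $\sum_j u_j v_j = \sum_j a_j b_j$, $\sum_j u_j^2 = \sum_j a_j$, and $\sum_j v_j^2 = \sum_j a_j b_j^2$. The standard Cauchy--Schwarz inequality $(\sum_j u_j v_j)^2 \le (\sum_j u_j^2)(\sum_j v_j^2)$ then yields exactly the claimed estimate.

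There is essentially no obstacle here; the only thing to check is that the square roots are well-defined, which follows from the hypothesis that the $a_j$ are non-negative reals. Alternatively, one could present the argument as an application of Jensen's inequality to the convex function $x \mapsto x^2$ with respect to the (unnormalized) weights $a_j$, but the Cauchy--Schwarz approach is the most direct and requires no extra machinery.
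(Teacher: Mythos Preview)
Your proof is correct and is essentially identical to the paper's own argument: the paper writes $a_j b_j = \sqrt{a_j}\, b_j \cdot \sqrt{a_j}$ and applies the Cauchy--Schwarz inequality, which is exactly your substitution $u_j=\sqrt{a_j}$, $v_j=\sqrt{a_j}\,b_j$.
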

\begin{proof}
We write $a_jb_j = \sqrt{a_j}b_j \cdot  \sqrt{a_j}$ and then use
the Cauchy-Schwarz inequality.
\end{proof}

Let $\theta_{1}^s=\left(\min_{a\in S^{\ell}}e^{s\varphi(a)}\right)^{\frac{1}{q-1}}$.  In the proof of Theorem \ref{existence-unicity trans-equ}, we have shown that
$$\psi_s=\lim_{n\to \infty}\mathcal{N}^n_s(\underline{\theta_{1}^s}),$$
where $\underline{\theta_{1}^s}$ the function on $S^{\ell-1}$ which is constantly equal to $\theta_{1}^s$.
By the definition of $\mathcal{N}_s$, it is obvious that
$$\mathcal{N}^n_s(\underline{\theta^s_1})=(\theta^s_1)^{\frac{1}{q^n}}\mathcal{N}^n(\underline{1}),$$
where $\underline{1}$ is the  function constantly equal to $1$. However,
for any $s\in \R$, we have
 $\lim_{n\to \infty}(\theta_{1}^s)^{\frac{1}{q^n}}=1$, so that
 $$\psi_s=\lim_{n\to \infty}\mathcal{N}^n_s(\underline{1}).$$
The above convergence is actually uniform for $s$ in any compact set of $\R$.
Let $$\psi_{s, n}=\mathcal{N}^n_s(\underline{1}).$$
In order to prove convexity of the functions
$$
    s \mapsto \psi_s(a), \quad \log \sum_{j \in S} \psi_s(b, j), \quad (a\in S^{\ell-1}, b\in S^{\ell-2})
$$
we have only to show those of $$
    s \mapsto \psi_{s, n}(a), \quad \log \sum_{j \in S} \psi_{s, n}(b,j).
$$
Actually we will make a proof by induction on $n$.

Recall that a function $H$ of class $C^2$ is convex if $H''\ge 0$. A function $H$ of class $C^2$ is log-convex if $\log H$ is convex or equivalently
$H'' H \ge (H')^2$.

First we have the following initiation of the induction.

\begin{lemma}\label{lemma convexity initiation} For any $a\in S^{\ell-1}$, the function $s \mapsto \mathcal{L}_s \underline{1} (a)$
is log-convex.
\end{lemma}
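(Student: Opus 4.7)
The plan is very short because the statement reduces immediately to Cauchy--Schwarz.

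First I would simply unfold the definition of $\mathcal{L}_s$ applied to the constant function $\underline{1}$. By \eqref{transer-operator}, for any $a\in S^{\ell-1}$,
$$
\mathcal{L}_s\underline{1}(a)=\sum_{j\in S}e^{s\varphi(a,j)}\underline{1}(Ta,j)=\sum_{j\in S}e^{s\varphi(a,j)}.
$$
Set $H(s)=\mathcal{L}_s\underline{1}(a)$. Then $H$ is smooth in $s$ and
$$
H(s)=\sum_{j\in S}e^{s\varphi(a,j)},\quad H'(s)=\sum_{j\in S}\varphi(a,j)\,e^{s\varphi(a,j)},\quad H''(s)=\sum_{j\in S}\varphi(a,j)^{2}\,e^{s\varphi(a,j)}.
$$

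Next I would show log-convexity via the $C^{2}$ criterion $H''H\ge (H')^{2}$. Applying Lemma~\ref{lemma convexity 1} with the non-negative weights $a_j=e^{s\varphi(a,j)}$ and $b_j=\varphi(a,j)$ (which one may take non-negative after shifting $\varphi$ by a constant---this shift only multiplies $H$ by a positive factor $e^{cs}$ and therefore does not affect log-convexity---or simply by using Cauchy--Schwarz for real-valued $b_j$, since the proof of Lemma~\ref{lemma convexity 1} works verbatim without any sign restriction on $b_j$), we get
$$
(H'(s))^{2}=\Bigl(\sum_{j\in S}\varphi(a,j)\,e^{s\varphi(a,j)}\Bigr)^{2}\le \Bigl(\sum_{j\in S}\varphi(a,j)^{2}e^{s\varphi(a,j)}\Bigr)\Bigl(\sum_{j\in S}e^{s\varphi(a,j)}\Bigr)=H''(s)H(s).
$$
Since $H(s)>0$ for all $s$, this is equivalent to $(\log H)''\ge 0$, i.e.\ $s\mapsto \mathcal{L}_s\underline{1}(a)$ is log-convex.

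I expect no real obstacle here: the lemma is essentially a restatement of Cauchy--Schwarz for the finite positive measure $\sum_{j\in S}e^{s\varphi(a,j)}\delta_{j}$ on $S$, which is the classical log-convexity of moment generating functions. The only cosmetic point to address is that Lemma~\ref{lemma convexity 1} is stated for non-negative $b_j$, but replacing $\varphi$ by $\varphi+C$ for $C$ large reduces to that case without changing log-convexity, so I would include one sentence about this reduction and be done. This initiation step will serve as the base $n=0$ (or $n=1$, depending on convention) for the induction on $n$ announced just after the lemma to handle $\psi_{s,n}$ and $\log\sum_{j}\psi_{s,n}(b,j)$.
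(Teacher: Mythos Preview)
Your proof is correct and follows essentially the same route as the paper: unfold $\mathcal{L}_s\underline{1}(a)$, differentiate twice, and reduce $H''H\ge (H')^2$ to the Cauchy--Schwarz inequality recorded in Lemma~\ref{lemma convexity 1}. Your extra sentence handling the sign restriction on $b_j$ in Lemma~\ref{lemma convexity 1} is a point the paper glosses over, so your write-up is in fact slightly more careful.
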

\begin{proof}
The log-convexity of $s \mapsto \mathcal{L}_s \underline{1} (a)$ is equivalent to
 $$
     (\mathcal{L}_s \underline{1} (a))^2 \le (\mathcal{L}_s \underline{1} (a))'' (\mathcal{L}_s \underline{1} (a)).
 $$
Recall the definition of $\mathcal{L}_s \underline{1} (a)$:
$$
   \mathcal{L}_s \underline{1} (a) = \sum_{j\in S} e^{s \varphi(Ta, j)}.
$$
Notice that
$$\left(e^{s\varphi(a,b)}\right)'=e^{s\varphi(a,b)}\varphi(a,b),\quad
 \left(e^{s\varphi(a,b)}\right)''=e^{s\varphi(a,b)}\varphi^2(a,b).$$
 Then log-convexity of $s \mapsto \mathcal{L}_s \underline{1} (a)$ is equivalent to
 $$
    \left(\sum_{j\in S} e^{s \varphi(Ta, j)} \varphi(Ta, j)\right)^2
    \le  \left(\sum_{j\in S} e^{s \varphi(Ta, j)} \varphi(Ta, j)^2\right)
     \left(\sum_{j\in S} e^{s \varphi(Ta, j)} \right).
 $$
 This is nothing  but the Cauchy-Schwarz inequality (see Lemma \ref{lemma convexity 1}).
\end{proof}

The induction will be based on the following recursive relation
$$
                   \psi_{s, n+1}(a) = \mathcal{N}_s \psi_{s, n} (a), \quad \mbox{equivalently}\quad
                   (\psi_{s, n+1}(a))^q = \mathcal{L}_s \psi_{s, n} (a).
$$
We are going to show that if $s\mapsto \mathcal{L}_s\psi_{s, n} (a)$ is log-convex, then so is
$s\mapsto \mathcal{L}_s\psi_{s, n+1} (a)$ and even  $s \mapsto \mathcal{N}_s\psi_{s, n} (a)=\psi_{s, n+1}(a)$
is convex and
$$
      \sum_{j\in S} \psi_{s, n+1}(b, j)
$$
is log-convex.


\begin{lemma}\label{lemma convexity 2}
Let   $(u_s)_{s\in \R}$ be a family of functions  in $\mathcal{F}(S^{\ell-1})$. We suppose that for $a\in S^{\ell-1}$,  $s \mapsto u_s(a)$ is twice differentiable with respect to $s\in \R$. Let
$$
      v_s(a)=\mathcal{N}_{s} u_s (a).
$$
Suppose that for any $a\in S^{\ell-1}$, $s \mapsto \mathcal{L}_s u_s(a)$ is log-convex. Then\\
\indent {\rm (1)}\  For all $a\in S^{\ell-1}$,  $s\mapsto v_s (a)$ is convex.\\
\indent {\rm (2)}\  For all $b\in S^{\ell-2}$,  $s \mapsto \sum_{j \in S}v_s(b, j)$ is log-convex.\\
\indent {\rm (3)}\  For all $a\in S^{\ell-1}$,  $s \mapsto \mathcal{L}_s v_s(a)$ is log-convex.
\end{lemma}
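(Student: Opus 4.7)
The plan is to reduce all three assertions to three preservation properties of log-convexity for strictly positive $C^2$ functions of $s\in\mathbb{R}$: (a) if $f$ is log-convex, then so is $f^\alpha$ for every $\alpha>0$; (b) if $f,g$ are log-convex, then so is $fg$; (c) if $f,g$ are log-convex, then so is $f+g$. Properties (a) and (b) are immediate from $\log f^\alpha=\alpha\log f$ and $\log(fg)=\log f+\log g$. Property (c) is the only substantive step; I would prove it by setting $h=f+g$ and expanding
\[
hh''-(h')^2 \;=\; \bigl(ff''-(f')^2\bigr)+\bigl(gg''-(g')^2\bigr)+\bigl(f''g+g''f-2f'g'\bigr).
\]
The first two bracketed terms are non-negative by log-convexity of $f$ and $g$; for the last, positivity of $f'', g''$ (forced by $f,g>0$ and log-convexity) combined with AM-GM yields $f''g+g''f\ge 2\sqrt{ff''\cdot gg''}\ge 2|f'g'|\ge 2f'g'$. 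This is precisely the Cauchy-Schwarz repackaging of Lemma \ref{lemma convexity 1}, which appears to have been placed in the paper for exactly this purpose.

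Given (a), (b), (c) the three assertions follow cleanly. For (1), the hypothesis says $s\mapsto\mathcal{L}_s u_s(a)$ is log-convex, so property (a) with $\alpha=1/q$ gives log-convexity of $v_s(a)=(\mathcal{L}_s u_s(a))^{1/q}$; since $v_s(a)>0$, this in particular gives convexity. For (2), applied at each $a=(b,j)\in S^{\ell-1}$ the previous step shows that every $s\mapsto v_s(b,j)$ is log-convex, and finitely many applications of (c) give log-convexity of $\sum_{j\in S}v_s(b,j)$.

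For (3), expand $\mathcal{L}_s v_s(a)=\sum_{j\in S}e^{s\varphi(a,j)}v_s(Ta,j)$. Each factor $s\mapsto e^{s\varphi(a,j)}$ has linear logarithm and is therefore log-convex; each $s\mapsto v_s(Ta,j)$ is log-convex by the argument used for (1). Property (b) makes each summand log-convex and (c) makes their sum log-convex, finishing (3). The main obstacle in the whole argument is property (c), the sum-preservation of log-convexity; once this is in place the rest is unwinding definitions. It is also worth noting that (3) is exactly what is needed to run the induction sketched just above the lemma: the conclusion of (3) at a given step provides the log-convexity hypothesis required at the next step, so the three clauses together propagate log-convexity through the recursion $\psi_{s,n+1}=\mathcal{N}_s\psi_{s,n}$.
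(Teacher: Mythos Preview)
Your proof is correct and, in substance, relies on the same Cauchy--Schwarz step as the paper's, but the organization is different. The paper carries out the derivative computation separately in each part: for (1) it differentiates $v_s(a)=(H_s(a))^{1/q}$ directly and records the relations $(v_s(a))'=v_s(a)R_s(a)$ and $(v_s(a))''\ge v_s(a)R_s(a)^2$ with $R_s(a)=\tfrac{1}{q}H_s'(a)/H_s(a)$; for (2) and (3) it feeds these pointwise relations into Lemma~\ref{lemma convexity 1} to bound the cross terms. You instead isolate once and for all the general fact that a finite sum of positive $C^2$ log-convex functions is log-convex---which is exactly Lemma~\ref{lemma convexity 1} with $a_j=f_j$ and $b_j=f_j'/f_j$, since then $\sum a_jb_j^2=\sum (f_j')^2/f_j\le\sum f_j''$---and then (1), (2), (3) become one-line applications of power, product, and sum stability of log-convexity. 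Your packaging is cleaner and more reusable; the paper's hands-on version has the minor advantage of making the intermediate inequalities $(v_s)''\ge v_s R_s^2$ explicit, though these are not used elsewhere.
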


\begin{proof} By the hypothesis, for each $ a\in S^{\ell-1}$, the function $s \mapsto \mathcal{L}_s u_s(a)$ is log-convex. That is to say, if we let $H_s(a) =  \mathcal{L}_s u_s(a)$, we have
\begin{equation}\label{Conv0}
H_s''(a)H_s(a)\geq \left(H'_s(a)\right)^2,
\end{equation}
where, as well as in the following,  $'$ and $''$ will refer to the derivatives with respect to $s$.

(1) Since $v_s (a) = (H_s(a))^{1/q}$, we have
$$(v_s (a))'=\frac{1}{q}(H_s(a))^{\frac{1}{q}-1}H'_s(a).$$
In other words,
\begin{equation}\label{Conv1}
       (v_s (a))'=v_s (a) R_s(a)
\end{equation}
with
$$
   R_s(a) = \frac{1}{q} \frac{H'_s(a)}{ H_s(a)}.
$$
Furthermore we have
\begin{eqnarray*}
(v_s (a))'' & = & \frac{1}{q}\left(\frac{1}{q}-1\right)
(H_s(a))^{\frac{1}{q}-2}
[H'_s(a)]^2+\frac{1}{q}(H_s(a))^{\frac{1}{q}-1} H''_s(a)\\
 & = & \frac{1}{q^2}
(H_s(a))^{\frac{1}{q}-2}
[H'_s(a)]^2 + \frac{1}{q}(H_s(a))^{\frac{1}{q}-2} [H_s(a) H''_s(a)
  - (H'_s(a))^2].
\end{eqnarray*}
By the hypothesis (\ref{Conv0}), $(v_s (a))''\ge 0$. Thus we have proved (1). The last equality implies
$$ (v_s (a))'' \ge \frac{1}{q^2}
(H_s(a))^{\frac{1}{q}-2}
[H'_s(a)]^2.$$
In other words,
\begin{equation}\label{Conv2}
       (v_s (a))''\ge v_s (a) [R_s(a)]^2.
\end{equation}
The relations (\ref{Conv1}) and (\ref{Conv2}) will be useful later.

(2)
By (\ref{Conv2}), we have
$$
\left(\sum_{j \in S} (v_s (b, j))'' \right)
\left(\sum_{j \in S}  v_s (b, j) \right)
\geq \left(\sum_{j \in S} v_s (b, j) R_s(b, j)^2 \right)
\left(\sum_{j \in S}  v_s (b, j) \right).
$$
Then, by the Cauchy-Schwarz inequality in the form of  Lemma \ref{lemma convexity 1}, we have
\begin{eqnarray*}
\left(\sum_{j \in S} (v_s (b, j))'' \right)
\left(\sum_{j \in S}  v_s (b, j) \right)
 & \geq &
\left(\sum_{j \in S} v_s (b, j) R_s(b, j) \right)^2\\
& = &
\left(\sum_{j \in S} (v_s (b, j))' \right)^2
\end{eqnarray*}
where the last equality is due to (\ref{Conv1}).
 Thus we have proved (2).

(3) Recall that
$$
   \mathcal{L} v_s(a) = \sum_{j \in S} e^{s \varphi(Ta, j)} v_s (Ta, j).
$$
Notice that

$\frac{d}{ds} e^{s \varphi(a,b)}v_s(Ta,j)$
$$=
e^{s \varphi(a,j)} \left[\varphi(a,j)v_s(Ta,b)+(v_s(Ta,j))^{'}\right],\ \ \ \ \ \ \ \ \ \ \ \ \ \ \ \ \ \ \ \ \  $$

$\frac{d^2}{ds^2} e^{s \varphi(a,b)}v_s(Ta,j)$
$$\ \ \ \ \ \ \ \ \ \ \ \ = e^{s\varphi(a,j)}\left[\varphi^2(a,j) v_s(Ta,j)+ 2\varphi(a,j) (v_s(Ta,j))'+ (v_s(Ta,j))''\right].
$$
By using (\ref{Conv1}), we can write
$$\frac{d}{ds} e^{s \varphi(a,b)}v_s(Ta,j) =
e^{s\varphi(a,j )}v_s(Ta,j)\left[\varphi(a,j)+R_s(Ta,j)\right].$$
By using (\ref{Conv1}) and (\ref{Conv2}), we get
$$
   \varphi^2(a,j) v_s(Ta,j)+ 2\varphi(a,j) (v_s(Ta,j))'+ (v_s(Ta,j))'' \ge \left[\varphi(a,j) +R_s(Ta,j)\right]^2,
$$
so that
\begin{eqnarray*}
\frac{d^2}{ds^2} e^{s \varphi(a,b)}v_s(Ta,j)  \ge  e^{s\varphi(a,j)} v_s(Ta,j)\left[\varphi(a,j) +R_s(Ta,j)\right]^2.
\end{eqnarray*}
There
$$
(\mathcal{L}_s v_s(a))'' \mathcal{L}_s v_s(a)  \geq \left(\sum_{j\in S} C_s(Ta,j)D_s(a,j)^2\right) \left(\sum_{j\in S}C_s(a,j) \right).
$$
where
$$C_s(a,j)= e^{s\varphi(a,j)} v_s(Ta,j), \quad\ D_s(a,j)= \varphi(a,j) +R_s(Ta,j).$$
Then, by the Cauchy inequality  (see Lemma \ref{lemma convexity 1}), we finally get
$$
(\mathcal{L}_s v_s(a))'' \mathcal{L}_s v_s(a) \geq \left(\sum_{j \in S}C_s(a,j)D_s(a,j)\right)^2
=[(\mathcal{L}_s v_s(a))']^2.
$$
That is the log-convexity of $s \mapsto \mathcal{L}_s v_s(a)$.
\end{proof}

\begin{theorem}\label{thm convexity}
For any $a\in \bigcup_{1\leq j\leq \ell} S^{\ell-j}$, the function $s\mapsto \psi_s(a)$ is convex.
The pressure function $P_\varphi(s)$ is also convex.
\end{theorem}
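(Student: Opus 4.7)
The plan is to run two nested inductions and then pass to a limit. The operator identity $\psi_{s, n+1}^q = \mathcal{L}_s \psi_{s,n}$, where $\psi_{s,n} = \mathcal{N}_s^n(\underline{1})$, together with the already uniform convergence $\psi_{s,n} \to \psi_s$ on compact subsets of $\R$, reduces the problem to a statement about the iterates that is stable under limits.

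First I would establish, by induction on $n \ge 0$, that $s \mapsto \mathcal{L}_s \psi_{s,n}(a)$ is log-convex for every $a \in S^{\ell-1}$. The base case $n=0$ is exactly Lemma \ref{lemma convexity initiation}, since $\psi_{s,0} = \underline{1}$. For the inductive step, apply Lemma \ref{lemma convexity 2} with $u_s := \psi_{s,n}$, so that $v_s = \mathcal{N}_s \psi_{s,n} = \psi_{s,n+1}$; conclusion (3) gives the log-convexity of $s \mapsto \mathcal{L}_s \psi_{s,n+1}(a)$, closing the induction. At the same time, conclusion (1) yields convexity of $s \mapsto \psi_{s,n}(a)$ for every $a \in S^{\ell-1}$ and conclusion (2) yields log-convexity of $s \mapsto \sum_{j \in S}\psi_{s,n}(b,j)$ for every $b \in S^{\ell-2}$. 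Taking pointwise limits, which preserves both convexity and log-convexity, gives the same properties for $\psi_s$: namely, $s \mapsto \psi_s(a)$ is convex for $a \in S^{\ell-1}$ and $s \mapsto \sum_{j} \psi_s(b,j)$ is log-convex for $b \in S^{\ell-2}$.

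Next I would extend to all $a \in S^k$ with $1 \le k \le \ell-2$ by downward induction on $k$, using the defining relation $\psi_s(a) = (\sum_{j \in S} \psi_s(a, j))^{1/q}$. The inductive statement I would carry is the slightly stronger property that $s \mapsto \psi_s(a)$ is log-convex, which is stable under both summation (a standard consequence of Hölder's inequality) and raising to the positive power $1/q$ (since $\log(f^{1/q}) = (1/q)\log f$). The base case $k = \ell-2$ is immediate from the log-convexity of $\sum_j \psi_s(b, j)$ obtained above. For the inductive step from $k+1$ to $k$, log-convexity of the $\psi_s(a, j)$ transfers to their sum, and then to its $(1/q)$-th power, yielding log-convexity of $\psi_s(a)$ on $S^k$. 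In particular, $s \mapsto \psi_s(a)$ is convex for every $a \in \bigcup_{1 \le j \le \ell} S^{\ell-j}$.

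Finally, for the pressure, $\sum_{j \in S} \psi_s(j)$ is log-convex (by the downward induction applied at $k = 1$, or equivalently since each $\psi_s(j)$ is log-convex and the sum preserves this), so $\log \sum_{j \in S} \psi_s(j)$ is convex, and hence $P_\varphi(s) = (q-1)q^{\ell-2} \log \sum_{j \in S} \psi_s(j)$ is convex as a positive multiple of a convex function. The only delicate point is making sure to carry the correct inductive hypothesis, namely log-convexity rather than mere convexity, because that is what propagates through both the operator $\mathcal{N}_s$ (via Lemma \ref{lemma convexity 2}(3)) and the extension formula; once this is in hand, the argument is a direct combination of the preceding lemmas.
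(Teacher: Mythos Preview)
Your proposal is correct and follows essentially the same two-layer induction as the paper: first an induction on $n$ for the iterates $\psi_{s,n}$ using Lemma~\ref{lemma convexity initiation} and Lemma~\ref{lemma convexity 2}, then a downward induction on the word length $k$ via the recursion $\psi_s(a)^q=\sum_j\psi_s(a,j)$. The only difference is cosmetic: you invoke Lemma~\ref{lemma convexity 2}(2) explicitly to launch the downward induction at $k=\ell-2$ and then propagate log-convexity using the elementary facts that sums and positive powers preserve it, whereas the paper phrases the downward step as another application of the same machinery with $\varphi\equiv 0$; your bookkeeping (carrying log-convexity rather than mere convexity through the second induction) is if anything cleaner than the paper's wording.
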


\begin{proof}  We prove convexity of $s\mapsto \psi_s(a)$ for $a\in  S^{\ell-1}$
by showing those of $s\mapsto \psi_{s, n}(a)$ by induction on $n$. The induction is based on Lemma~\ref{lemma convexity initiation}
and Lemma~\ref{lemma convexity 2} (only the points (1) and (3) are used).

Now we prove convexity of $s\mapsto \psi_s(a)$ for $a\in  S^{\ell-k}$ ($2\le k\le \ell$) by induction on $k$ and by using
what we have just proved above (as the initiation of induction). We can do that because of the following recursive relation:
for $a\in  S^{\ell-k}$ ($2\le k\le \ell$), we have
$$
      \psi_s(a)^q = \sum_{j\in S}\psi_s(a, j).
$$


The right hand side is the operator $\mathcal{L}_s$ defined by the $\varphi$ which is identically zero.
So the log-convexity of $\psi_s(a, j)$ implies that of $\psi_s(a)$ just as the log-convexity of $\psi_{s,n}$ implies that of $\psi_{s,n+1}(a)$.

Recall that the pressure function is proportional to
$$
    s \mapsto \log \psi_s(\emptyset) = \log \sum_{j \in S} \psi_s(j).
$$
The convexity of the pressure is just the log-convexity of $\sum_{j \in S} \psi_s(j)$, which is implied by Lemma~\ref{lemma convexity 2} (3) and the
log-convexity of $\psi_s(j)$.
\end{proof}

\subsection{Construction of the measures $\mu_s$ and $\mathbb{P}_{\mu_s}$}
Below we construct a class of $(\ell-1)$-Markov measure $\mu_s$ whose transition probability and initial law are determined by the fixed point $\psi_s$ of the operator $\mathcal{N}_s$. The corresponding telescopic product measure $\P_{\mu_s}$ will play the same role as Gibbs measure played in the study of simple ergodic averages.

Fix $s\in \R$. Let $\psi_s$ be the function mentioned above.
Recall that $\psi_s$ was first defined on $S^{\ell-1}$ as follows
$$
\left(\psi_s (a)\right)^q
= \sum_{b \in S} e^{s \varphi(a, b)}
\psi_s (Ta, b), \quad (a\in S^{\ell-1}).
$$
Then it was extended  on $S^k$ by induction on $1\le k \le \ell-2$ as follows
$$
\psi_s (a)=\left(\sum_{b \in S} \psi_s (a, b)\right)^{\frac{1}{q}}, \quad \ (a\in S^k).
$$
These functions defined on words of length varying from $1$ to $\ell-1$ allow us to define  a $(\ell-1)$-step Markov measure on $\Sigma_m$, which will
  be denoted by  $\mu_s$,  with the initial law
\begin{equation}\label{def measure 1}
\pi_s([a_1,\cdots,a_{\ell-1}])=\prod_{j=1}^{\ell-1}\frac{\psi_s(a_1,\cdots,a_j)}{\psi_s^q(a_1,\cdots,a_{j-1})}
\end{equation}
and the transition probability
\begin{equation}\label{def measure 2}
Q_s\left([a_1,\cdots,a_{\ell-1}],[a_2,\cdots,a_{\ell}]\right)=e^{s\varphi(a_{1},\cdots,a_{\ell})} \frac{\psi_s(a_2,\cdots,a_{\ell})}{\psi_s^q(a_{1},\cdots,a_{\ell-1})}.
\end{equation}

Here we have identified $\Sigma_m$ with $(S^{\ell-1})^\mathbb{N}$. Actually, $\pi_s$ is a probability vector because
    $$
       \sum_{a_j \in S} \frac{\psi_s(a_1,\cdots,a_j)}{\psi_s^q(a_1,\cdots,a_{j-1})}=1
    $$
    and $Q$ is a transition probability because $\mathcal{N}_s \psi_s = \psi_s$.

As usual, $\mathbb{P}_{\mu_s}$ will denote the telescopic product measure associated to $\mu_s$. See \S 2.1 for its definition
and its general properties.

\section{Properties of the pressure  function}
We have seen in the previous section that the pressure function is real analytic and convex on $\R$. In this section we continue to discuss some of its  further properties. These properties mainly concern its strict convexity when $\alpha_{\min}<\alpha_{\max}$ and a Ruelle type formula relating the expected limit of the multiple ergodic average with respect to the measure $\P_{\mu_s}$ and the derivative of $\pv$.

\subsection{Ruelle type formula}
We state here the following identity which can be regarded as an analogue of Ruelle's derivative formula concerning the classical Gibbs measure and pressure function, its proof will be given in Section \ref{Ruelle formula} (Proposition \ref{prop 1}).
\begin{theorem}\label{Thm Ruelle formula} We have
$$(q-1)^2\sum_{k=1}^\infty\frac{1}{q^{k+1}}\sum_{j=0}^{k-1}\E_{\mu_s}\varphi(x_j,\cdots,x_{j+\ell-1})=P_\varphi'(s).$$
\end{theorem}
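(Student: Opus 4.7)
My plan is to differentiate the defining equations for $\psi_s$ and the pressure formula, and bootstrap the resulting identities into the announced series by iterating two recursions (one for short words, one at the boundary $|a|=\ell-1$) and then a routine geometric rearrangement.

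Set $\rho_s(a):=\psi_s'(a)/\psi_s(a)$ for $a\in \bigcup_{k=1}^{\ell-1}S^k$; this is well defined and, for fixed $s$, bounded uniformly in $a$ because the finite family $s\mapsto \psi_s(a)$ is analytic and strictly positive (Theorem~\ref{existence-unicity trans-equ} and Proposition~\ref{analyticity}). Differentiating $\psi_s(a)^q=\sum_j \psi_s(a,j)$ for $|a|\le \ell-2$ and dividing by $\psi_s(a)^q$ produces the \emph{short-word recursion}
\[
q\rho_s(a)=\sum_{j\in S} p_s(a,j)\,\rho_s(a,j), \qquad p_s(a,j):=\frac{\psi_s(a,j)}{\psi_s(a)^q},
\]
while differentiating $\psi_s(a)^q=\sum_j e^{s\varphi(a,j)}\psi_s(Ta,j)$ for $|a|=\ell-1$ gives the \emph{boundary recursion}
\[
q\rho_s(a)=\sum_{j\in S} Q_s\bigl(a,(Ta,j)\bigr)\bigl[\varphi(a,j)+\rho_s(Ta,j)\bigr].
\]
Note that $p_s(a,j)$ and $Q_s(a,(Ta,j))$ are exactly the transition weights used to build the Markov measure $\mu_s$ in (\ref{def measure 1})--(\ref{def measure 2}).

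Starting from
\[
P_\varphi'(s)=(q-1)q^{\ell-2}\,\frac{\sum_j \psi_s'(j)}{\sum_j \psi_s(j)}=(q-1)q^{\ell-2}\sum_{j\in S}\frac{\psi_s(j)}{\sum_i \psi_s(i)}\,\rho_s(j),
\]
I recognise $\psi_s(j)/\sum_i\psi_s(i)$ as the one-dimensional marginal of $\pi_s$ (telescoping $\psi_s(a)^q=\sum_b\psi_s(a,b)$). Plugging the short-word recursion into itself $\ell-2$ times, the weights collapse into the $(\ell-1)$-dimensional marginal $\pi_s$ (this is the main bookkeeping step, where each iteration absorbs one factor $q$ into the $q^{\ell-2}$ in front), giving
\[
P_\varphi'(s)=(q-1)\,\E_{\mu_s}\rho_s(x_0,x_1,\ldots,x_{\ell-2}).
\]

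Now set $\Phi_j:=\E_{\mu_s}\varphi(x_j,\ldots,x_{j+\ell-1})$ and $\Theta_j:=\E_{\mu_s}\rho_s(x_j,\ldots,x_{j+\ell-2})$. Because $\mu_s$ is $(\ell-1)$-Markov with transition $Q_s$, taking $\E_{\mu_s}$ in the boundary recursion applied to the window $(x_j,\ldots,x_{j+\ell-2})$ yields the scalar recursion
\[
q\Theta_j=\Phi_j+\Theta_{j+1},\qquad j\ge 0.
\]
Since $|\Theta_k|\le \max_a|\rho_s(a)|$ uniformly in $k$, we have $\Theta_k/q^k\to 0$, so iteration gives $\Theta_0=\sum_{j\ge 0}\Phi_j/q^{j+1}$ and hence $P_\varphi'(s)=(q-1)\sum_{j\ge 0}\Phi_j/q^{j+1}$. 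A Fubini-type interchange,
\[
(q-1)^2\sum_{k=1}^\infty \frac{1}{q^{k+1}}\sum_{j=0}^{k-1}\Phi_j
=(q-1)^2\sum_{j=0}^\infty \Phi_j\sum_{k=j+1}^\infty \frac{1}{q^{k+1}}
=(q-1)\sum_{j=0}^\infty \frac{\Phi_j}{q^{j+1}},
\]
identifies the two expressions.

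The main obstacle is the middle step: verifying that $\ell-2$ iterations of the short-word recursion precisely reassemble the weights $\pi_s^{(k)}$ and eat the prefactor $q^{\ell-2}$, leaving the clean identity $P_\varphi'(s)=(q-1)\Theta_0$. Everything afterwards reduces to a one-line scalar recursion and a geometric series.
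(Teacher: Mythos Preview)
Your proof is correct and follows essentially the same route as the paper: differentiate the defining relations for $\psi_s$, iterate the short-word identity to reach $P_\varphi'(s)=(q-1)\,\E_{\mu_s}\rho_s(x_0^{\ell-2})$ (the paper's (\ref{E3})), and use the Markov property together with the boundary identity to relate $\E_{\mu_s}\rho_s(x_j^{j+\ell-2})$ and $\E_{\mu_s}\varphi(x_j^{j+\ell-1})$. The only cosmetic difference is in the endgame: the paper keeps the auxiliary quantity $v(a)$ and its expectations $S_k$ explicit and shows $\sum_k q^{-k-1}(qS_{k-1}-S_k)=0$, whereas you fold the two identities (\ref{E1})--(\ref{E2}) into the single recursion $q\Theta_j=\Phi_j+\Theta_{j+1}$, solve it by iteration, and recover the double sum via Fubini.
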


As an applications of Theorem \ref{Thm Ruelle formula}, we give the following formula concerning the value $P'_\varphi(0)$.

\begin{proposition}\label{prop average1}
$$P'_\varphi(0)=\frac{\sum_{a\in S^{\ell}}\varphi(a)}{m^{\ell}}.$$
\end{proposition}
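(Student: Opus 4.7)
The plan is to evaluate both sides of the Ruelle type formula of Theorem~\ref{Thm Ruelle formula} at $s=0$, which reduces the problem to identifying the measure $\mu_0$ and computing a geometric-type series. First I would determine the fixed point $\psi_0$ of $\mathcal{N}_0$. At $s=0$ the operator becomes
$$
   \mathcal{N}_0 \psi (a) = \left(\sum_{j\in S} \psi(Ta, j)\right)^{1/q},
$$
so the constant function $\psi \equiv c$ is a fixed point iff $c = (mc)^{1/q}$, i.e.\ $c = m^{1/(q-1)}$. By the uniqueness part of Theorem~\ref{existence-unicity trans-equ} this is the only positive fixed point on $S^{\ell-1}$. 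Using the inductive extension \eqref{transer_equation 2}, a direct computation gives $\psi_0(a) = m^{1/(q-1)}$ on every $S^k$ with $1 \le k \le \ell-1$, and the same value $m^{1/(q-1)}$ for the natural convention $\psi_0(\emptyset) = (\sum_{j\in S}\psi_0(j))^{1/q}$.

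Next I would compute the initial law \eqref{def measure 1} and the transition probability \eqref{def measure 2} of the $(\ell-1)$-Markov measure $\mu_0$. Since every ratio
$\psi_0(a_1,\ldots,a_j)/\psi_0^q(a_1,\ldots,a_{j-1})$ equals $m^{1/(q-1)}/m^{q/(q-1)} = 1/m$, one gets $\pi_0([a_1\cdots a_{\ell-1}]) = m^{-(\ell-1)}$ and $Q_0([a_1\cdots a_{\ell-1}],[a_2\cdots a_\ell]) = 1/m$ for every choice of symbols. Hence $\mu_0$ is the uniform Bernoulli measure on $\Sigma_m$, and consequently
$$
  \E_{\mu_0}\varphi(x_j,\ldots,x_{j+\ell-1}) \;=\; \frac{1}{m^{\ell}}\sum_{a\in S^{\ell}}\varphi(a) \;=:\; M
$$
for every $j \ge 0$, independent of $j$.

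Finally I would substitute this into Theorem~\ref{Thm Ruelle formula} at $s=0$:
$$
  P'_\varphi(0) \;=\; (q-1)^2 \sum_{k=1}^\infty \frac{1}{q^{k+1}}\sum_{j=0}^{k-1} M \;=\; (q-1)^2 M \sum_{k=1}^\infty \frac{k}{q^{k+1}}.
$$
The series is evaluated from $\sum_{k\ge 1} k x^k = x/(1-x)^2$ with $x=1/q$, giving $\sum_{k=1}^\infty k/q^{k+1} = 1/(q-1)^2$. Therefore $P'_\varphi(0) = M = m^{-\ell}\sum_{a\in S^\ell}\varphi(a)$, as claimed. There is no real obstacle here; the only point requiring a little care is the identification of $\psi_0$ as a constant (which uses the uniqueness in Theorem~\ref{existence-unicity trans-equ}) and the handling of $\psi_0(\emptyset)$ so that the factor in $\pi_0$ corresponding to $j=1$ is treated consistently.
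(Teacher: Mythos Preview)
Your proof is correct and follows essentially the same approach as the paper: apply the Ruelle type formula at $s=0$, identify $\psi_0\equiv m^{1/(q-1)}$ (hence $\mu_0$ is the uniform Bernoulli measure) via uniqueness, compute the constant expectation $M$, and sum the series. You even spell out the evaluation $\sum_{k\ge 1} k/q^{k+1}=1/(q-1)^2$ that the paper leaves as ``an elementary calculation.''
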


\begin{proof}
By Theorem \ref{Thm Ruelle formula}, we have
\begin{equation}\label{equ prop average}
P'_\varphi(0)=(q-1)^2\sum_{k=1}^\infty\frac{1}{q^{k+1}}\sum_{j=0}^{k-1}\E_{\mu_0}
\varphi(x_j,\cdots,x_{j+\ell-1}).
\end{equation}

First of all, we need to determine $\mu_0$. It is straightforward to verify that the constant function $\psi_0\equiv m^{\frac{1}{q-1}}$ is a solution of the following equations when $s=0$.
$$
\left(\psi_s (a)\right)^q
= \sum_{b \in S} e^{s \varphi(a, b)}
\psi_s (Ta, b), \quad (a\in S^{\ell-1}).
$$
Actually, the function $\psi_0$ is the only positive solution by uniqueness of the positive solution (Theorem \ref{existence-unicity trans-equ}). The measure $\mu_0$ defined by this solution as in (\ref{def measure 1}) and (\ref{def measure 2}) is the Lebesgue measure. So, for any  $j\geq 0$ we have

\begin{eqnarray*}
\E_{\mu_0}
\varphi(x_j,\cdots,x_{j+\ell-1}) & = & \sum_{x_0,\cdots,x_{j+\ell-1}}\mu_0([x_0^{j+\ell-1}])\varphi(x_j,\cdots,x_{j+\ell-1}) \\
 & = & \sum_{x_0,\cdots,x_{j+\ell-1}}m^{-(j+\ell)}\varphi(x_j,\cdots,x_{j+\ell-1}) \\
 & = & \sum_{x_0,\cdots,x_{\ell-1}}m^{-\ell}\varphi(x_0,\cdots,x_{\ell-1})\\
 &=& \frac{\sum_{a\in S^{\ell}}\varphi(a)}{m^{\ell}}.
\end{eqnarray*}

Now we get the desired result by substituting the above expression in (\ref{equ prop average}) and by an elementary calculation.
\end{proof}

\subsection{Translation via linearity}

\begin{theorem}\label{thm relation pressure}
For any $\beta\in \R$,  we have $$P_\varphi(s)-\beta
s=P_{\varphi-\beta}(s),$$ where $P_{\varphi-\beta}(s)$ is the
pressure function associated to the potential $\varphi-\beta$.
\end{theorem}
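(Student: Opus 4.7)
The strategy is to relate the fixed points of the non-linear operators $\mathcal{N}_s$ associated to $\varphi$ and to $\varphi-\beta$ by an explicit scaling factor, and then propagate this scaling through the inductive extension of $\psi_s$ to shorter words in order to compare the pressures.

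First I would write down the non-linear transfer operator attached to $\varphi-\beta$; directly from the definition,
\[
  \mathcal{N}_s^{(\varphi-\beta)}\psi(a)=\Bigl(\sum_{j\in S}e^{s(\varphi(a,j)-\beta)}\psi(Ta,j)\Bigr)^{1/q}
     = e^{-s\beta/q}\,\mathcal{N}_s^{(\varphi)}\psi(a).
\]
Denote by $\psi_s$ the unique strictly positive fixed point of $\mathcal{N}_s^{(\varphi)}$ on $S^{\ell-1}$ provided by Theorem~\ref{existence-unicity trans-equ}. I would look for the fixed point of $\mathcal{N}_s^{(\varphi-\beta)}$ in the form $\widetilde{\psi}_s=c_s\,\psi_s$. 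Using the homogeneity $\mathcal{N}_s^{(\varphi)}(c\psi)=c^{1/q}\mathcal{N}_s^{(\varphi)}\psi$, the fixed point equation reduces to $c_s=e^{-s\beta/q}c_s^{1/q}$, hence
\[
  c_s=e^{-s\beta/(q-1)}.
\]
By the uniqueness part of Theorem~\ref{existence-unicity trans-equ}, this is the only strictly positive fixed point, so $\widetilde{\psi}_s^{(\ell-1)}=e^{-s\beta/(q-1)}\psi_s^{(\ell-1)}$ on $S^{\ell-1}$.

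Next I would propagate this constant through the inductive extension (\ref{transer_equation 2}). Setting $c_s^{(\ell-1)}:=e^{-s\beta/(q-1)}$ and using $\widetilde{\psi}_s^{(k)}(a)=\bigl(\sum_{j}\widetilde{\psi}_s^{(k+1)}(a,j)\bigr)^{1/q}$, an immediate induction gives
\[
  \widetilde{\psi}_s^{(k)}(a)=c_s^{(k)}\,\psi_s^{(k)}(a),\qquad c_s^{(k)}=\bigl(c_s^{(k+1)}\bigr)^{1/q}=e^{-s\beta/((q-1)q^{\ell-1-k})},
\]
for all $1\le k\le \ell-2$. In particular $c_s^{(1)}=e^{-s\beta/((q-1)q^{\ell-2})}$, hence
\[
  \sum_{j\in S}\widetilde{\psi}_s(j)=e^{-s\beta/((q-1)q^{\ell-2})}\sum_{j\in S}\psi_s(j).
\]

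Finally I plug this into the definition (\ref{pressure function}) of the pressure: since $P_{\varphi-\beta}(s)=(q-1)q^{\ell-2}\log\sum_{j\in S}\widetilde{\psi}_s(j)$,
\[
  P_{\varphi-\beta}(s)=(q-1)q^{\ell-2}\log\sum_{j\in S}\psi_s(j)\;+\;(q-1)q^{\ell-2}\cdot\Bigl(-\tfrac{s\beta}{(q-1)q^{\ell-2}}\Bigr)=P_\varphi(s)-\beta s,
\]
which is the desired identity. There is no substantive obstacle here: the proof is essentially a scaling identity combined with the uniqueness of the positive fixed point of $\mathcal{N}_s$; the only point requiring care is the telescoping of the constants $c_s^{(k)}$ through the inductive definition of $\psi_s$ on shorter words, which is exactly what makes the prefactor $(q-1)q^{\ell-2}$ in the definition of $P_\varphi$ cancel the exponent $\tfrac{1}{(q-1)q^{\ell-2}}$ and produce the clean linear correction $-\beta s$.
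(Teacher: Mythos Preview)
Your proof is correct and follows essentially the same approach as the paper: both establish the scaling relation $\widetilde{\psi}_s = e^{-s\beta/(q-1)}\psi_s$ on $S^{\ell-1}$ (the paper obtains it by iterating $\mathcal{N}_{\varphi-\beta,s}^n(\underline{1})$ and passing to the limit, while you solve the fixed-point equation for the constant directly and invoke uniqueness), then propagate the constants through the inductive extension to shorter words and plug into the definition of the pressure. The two arguments are minor variations of one another.
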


\begin{proof}
Let $\mathcal{N}_{\varphi-\beta, s}$ be the operator as defined in (\ref{non-linear transfer operator})  with
$$ A(a)=e^{s(\varphi(a)-\beta) },\ \ (a\in S^{\ell}).$$
By Theorem \ref{existence-unicity trans-equ}, the operator
$\mathcal{N}_{\varphi-\beta, s}$ admits a unique positive fixed
function $g_s\in \mathcal{F}(S^{\ell-1})$. We have seen that $g_s$
is given by
$$g_s=\lim_{n\to\infty}\mathcal{N}_{\varphi-\beta, s}^n(\underline{1}).$$
By the definitions of $\mathcal{N}_s$ and
$\mathcal{N}_{\varphi-\beta, s}$, it is obvious that
$$\mathcal{N}_{\varphi-\beta, s}=e^{-\frac{s\beta}{q}}\mathcal{N}_s.$$
By induction we get that $$\mathcal{N}_{\varphi-\beta, s}^n=e^{-s\beta(\frac{1}{q}+\cdots+\frac{1}{q^n})}\mathcal{N}_s^n.$$
Thus
$$g_s=\lim_{n\to\infty}\mathcal{N}_{\varphi-\beta, s}^n(\underline{1})=
e^{-s\beta(\sum_{n=1}^\infty\frac{1}{q^n})}\psi_s=e^{-\frac{s\beta}{q-1}}\psi_s.$$
Since for $u\in\bigcup_{1\leq k\leq \ell-2}S^k$, $g_s(u)$ is defined by
$$g_s(u)=\left(\sum_{j=0}^{m-1}g_s(u,j)\right)^{\frac{1}{q}},
$$
we deduce that for $u\in S^k$ with $1\leq k\leq \ell-2$ we have
$$g_s(u)=e^{-\frac{s\beta}{(q-1)q^{\ell-1-k}}}\psi_s(u).$$
Thus
$$P_{\varphi-\beta}(s)=(q-1)q^{\ell-2}\log \sum_{j=0}^{m-1}g_s(j)=-s\beta+P_{\varphi}(s).$$
\end{proof}

\begin{remark}\label{remark} Note that when $\beta =\alpha_{min}$ (resp. $\beta=\alpha_{max}$), the function
$$s\longmapsto \mathcal{N}_{\varphi-\beta, s}$$ is increasing (resp. decreasing).
Then in this case, the function $s\ \mapsto\ g_s $ is also
increasing (resp. decreasing) and so is the pressure function
$s\mapsto P_{\varphi-\beta}(s)$.
\end{remark}

As an application of Theorem \ref{thm relation pressure} and Remark \ref{remark} we have the following consequence.

\begin{proposition}\label{prop cte}
If $s \mapsto P'_\varphi(s)$ is constant on $\R$, then $\varphi$ is
constant on $S^{\ell}$.
\end{proposition}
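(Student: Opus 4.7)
The plan is to exploit the log-convexity of $s\mapsto\psi_s(a)$ (inherited along the induction in Theorem~\ref{thm convexity}) and propagate the vanishing of $P''_\varphi$ through the recursive structure defining $\psi_s$. Set $h_s(a):=\log\psi_s(a)$ for $a\in\bigcup_{k=1}^{\ell-1}S^k$ and, for uniformity, extend by $h_s(\emptyset):=\tfrac{1}{q}\log\sum_{j\in S}e^{h_s(j)}$. Each $h_s(a)$ is then a convex function of $s$, and $P_\varphi(s)=(q-1)q^{\ell-1}h_s(\emptyset)$. The defining relations take the form
$$
qh_s(a)=\log\sum_{j\in S}e^{h_s(a,j)}\ \ (0\le|a|\le\ell-2),\qquad qh_s(a)=\log\sum_{j\in S}e^{s\varphi(a,j)+h_s(Ta,j)}\ \ (|a|=\ell-1),
$$
so the hypothesis $P'_\varphi\equiv\beta$ translates into $h''_s(\emptyset)\equiv 0$ together with $h'_s(\emptyset)\equiv\beta/[(q-1)q^{\ell-1}]$.

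Differentiating the first recursion twice in $s$ yields the log-sum-exp identity
$$
qh''_s(a)=\E_{p}[h''_s(a,\cdot)]+\mathrm{Var}_{p}[h'_s(a,\cdot)],
$$
where the expectation and variance are taken with respect to the probability vector $p(j)\propto e^{h_s(a,j)}$ on $S$ (which has full support). Both summands are non-negative, the first by the log-convexity of $\psi_s$, so starting from $h''_s(\emptyset)=0$ an induction on $|a|$ gives $h''_s(a)\equiv 0$ for every $a\in\bigcup_{k=0}^{\ell-1}S^k$ and $h'_s(a,\cdot)$ constant in its last coordinate. Coupling this with the first-derivative identity $qh'_s(a)=\sum_{j}p(j)h'_s(a,j)$ forces $h'_s(a,j)=qh'_s(a)$ for every $j$, whence a second induction gives $h'_s(a)=q^{|a|}h'_s(\emptyset)$, depending only on the length $|a|$.

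The same second-derivative computation applied to the top-level recursion produces, for $|a|=\ell-1$,
$$
0=qh''_s(a)=\E_{p}[h''_s(Ta,\cdot)]+\mathrm{Var}_{p}\bigl[\varphi(a,\cdot)+h'_s(Ta,\cdot)\bigr],
$$
now with $p(j)\propto e^{s\varphi(a,j)+h_s(Ta,j)}$. The first summand vanishes by what precedes, and $h'_s(Ta,\cdot)$ is constant in its last argument, so the variance collapses to $\mathrm{Var}_{p}[\varphi(a,\cdot)]$ and forces $\varphi(a,j)$ to be independent of $j$; write $\varphi(a,j)=\tilde\varphi(a)$. The top-level recursion then simplifies to $qh_s(a)=s\tilde\varphi(a)+qh_s(Ta)$, and one differentiation in $s$, combined with $h'_s(a)=q^{\ell-1}h'_s(\emptyset)$ and $h'_s(Ta)=q^{\ell-2}h'_s(\emptyset)$, gives
$$
\tilde\varphi(a)=q\bigl(h'_s(a)-h'_s(Ta)\bigr)=(q-1)q^{\ell-1}h'_s(\emptyset)=\beta,
$$
so $\varphi\equiv\beta$ on $S^\ell$, as required.

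The substantive inputs are the log-convexity of $\psi_s$ and the log-sum-exp second-derivative identity; the main obstacle is bookkeeping, namely handling the two distinct forms of the recursion uniformly and the degenerate case $\ell=2$, which the auxiliary extension $h_s(\emptyset)$ absorbs cleanly into a single induction.
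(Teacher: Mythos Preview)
Your proof is correct and takes a genuinely different route from the paper's. The paper first reduces to the case $\beta=0$ via the translation identity $P_\varphi(s)-\beta s=P_{\varphi-\beta}(s)$, then argues from the ordinary convexity of $s\mapsto\psi_s(a)$: the sum $\sum_j g'_s(j)$ vanishing forces each monotone summand to be constant, hence each $g_s(j)$ is affine and, being strictly positive on all of $\R$, must be constant; this propagates through the recursion and a final contradiction (letting $s\to+\infty$ in the transfer equation) rules out nonconstant $\varphi$. Your argument instead works directly with the log-convexity of $\psi_s(a)$ (which is indeed established in the proof of Theorem~\ref{thm convexity}, though the theorem statement only records convexity) and the log-sum-exp second-derivative identity $qh''_s(a)=\E_p[h''_s(a,\cdot)]+\mathrm{Var}_p[h'_s(a,\cdot)]$. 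This decomposition lets you propagate $h''_s\equiv 0$ and the constancy of $h'_s$ at each level simultaneously, and then read off $\varphi\equiv\beta$ by a direct computation rather than a contradiction. Your approach is cleaner and more quantitative: it yields the exact value of $\varphi$ in one stroke and avoids both the translation trick and the asymptotic argument. The paper's approach, on the other hand, uses only convexity (not log-convexity) and the elementary ``affine positive on $\R$ implies constant'' step, so it is slightly more elementary in its inputs.
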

\begin{proof}
Suppose that $P'_\varphi$ is constant on $\R$. Then
$$P'_\varphi(s)\equiv P'_\varphi(0)=\frac{\sum_{a\in S^{\ell}}\varphi(a)}{m^{\ell}}:=\overline{\varphi}.$$
By Theorem \ref{thm relation pressure}, we have
$$P_\varphi(s)=\overline{\varphi} s+P_{\varphi-\overline{\varphi}}(s).$$
The last two equations imply that
$$P'_{\varphi-\overline{\varphi}}(s)\equiv0.$$
This is equivalent to that
\begin{equation}\label{equ prop cte}
\sum_{j=0}^{m-1}g_s'(j)\equiv0,
\end{equation}
where $g_s$ is the positive fixed point of $\mathcal{N}_{{\varphi-\overline{\varphi}},s}$.
By Theorem \ref{thm convexity}, the function $s\mapsto g_s$ is convex, so $g_s'(j)$ is increasing for all $j\in S$. This, with (\ref{equ prop cte}) imply that $g_s'(j)$ is constant for all $j\in S$. So for every $j$ the function $g_s(j)$ is affine. But these functions are strictly positive on $\R$, they are therefore necessarily constant on $\R$. So $$g_s'(j)\equiv0,\ \ \forall j\in S.$$
For $u\in \bigcup_{1\leq k\leq \ell-2}$, $g_s(u)$ is defined by the following inductive relation.
$$g_s(u)^q=\sum_{j=0}^{m-1}g_s(uj), \ \ u\in \bigcup_{1\leq k\leq \ell-2}S^k.$$
Differentiating  these equations, we get
$$qg_s^{q-1}(u)g_s'(u)=\sum_{j=0}^{m-1}g_s'(uj), \ \ u\in \bigcup_{1\leq k\leq \ell-2}S^k.$$
For any $i\in S$, since $g_s'(i)\equiv0,$ we get
$$\sum_{j=0}^{m-1}g_s'(ij)\equiv0.$$
With the same argument used for proving that $g_s(j)$ is constant for all $j\in S$, we can also prove that
$g_s(ij)$ is constant for all $(i,j)\in S^2$. By induction, we can show that $g_s(u)$ are constant for all $u\in \bigcup_{1\leq k\leq \ell-1}S^{k}$. By the definition of $g_s$, for $u\in S^{\ell-1}$, we have
\begin{equation}\label{equ prop cte1}
g_s^q(u)=\sum_{j=0}^{m-1}e^{s(\varphi(uj)-\overline{\varphi})}g_s(Tu,j).
\end{equation}
We now suppose that $\varphi$ is not constant on $S^{\ell}$, i.e., $\alpha_{\min}<\alpha_{\max}$. Then there exists
$a\in S^{\ell}$ such that $$\varphi(a)>\overline{\varphi}.$$
Let us write $a=(u,j)$ with $u\in S^{\ell-1}$ and $j\in S$.
By  (\ref{equ prop cte1}), we have
$$g_s^q(u)>e^{s(\varphi(u,j)-\overline{\varphi})}g_s(Tu,j), \ \ \forall s\in \R.$$
As $g_s(u)$ and $g_s(Tu,j)$ are strictly positive constants, this is impossible when $s$ tend to $+\infty$. Then we conclude that $\varphi$ is constant on $S^{\ell}$. 
\end{proof}

\subsection{Strict convexity of the pressure function} 
\begin{theorem}\label{thm strict convexity}
Suppose that $\alpha_{\min}<\alpha_{\max}$. Then

 {\rm (i)} $\pv'(s)$ is
strictly increasing on  $\R$.

{\rm (ii)} $\alpha_{\min}\leq
P'_\varphi(-\infty)<P'_\varphi(+\infty)\leq \alpha^f_{\max}.$

\end{theorem}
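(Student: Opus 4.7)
The plan is to derive (i) from analyticity together with Proposition~\ref{prop cte}, and to derive (ii) from Theorem~\ref{thm relation pressure} combined with the monotonicity observations in Remark~\ref{remark}.

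For (i), since $P_\varphi$ is real-analytic (Proposition~\ref{analyticity}) and convex (Theorem~\ref{thm convexity}), its derivative $P'_\varphi$ is real-analytic and non-decreasing on $\R$. Suppose for contradiction that $P'_\varphi$ is \emph{not} strictly increasing. Then there exist $s_1<s_2$ with $P'_\varphi(s_1)=P'_\varphi(s_2)$, and by monotonicity $P'_\varphi$ is constant on $[s_1,s_2]$. Since a real-analytic function that is constant on an interval is constant everywhere, $P'_\varphi$ is constant on all of $\R$. Proposition~\ref{prop cte} then forces $\varphi$ to be constant on $S^{\ell}$, contradicting the standing assumption $\alpha_{\min}<\alpha_{\max}$.

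For the bounds in (ii), I would apply Theorem~\ref{thm relation pressure} with $\beta=\alpha_{\min}$ and with $\beta=\alpha_{\max}$. From $P_\varphi(s)-\alpha_{\min}s=P_{\varphi-\alpha_{\min}}(s)$, differentiating yields $P'_\varphi(s)-\alpha_{\min}=P'_{\varphi-\alpha_{\min}}(s)$. By Remark~\ref{remark}, the non-negative potential $\varphi-\alpha_{\min}\ge 0$ makes $s\mapsto P_{\varphi-\alpha_{\min}}(s)$ non-decreasing, so $P'_{\varphi-\alpha_{\min}}(s)\ge 0$ for all $s\in\R$. Hence $P'_\varphi(s)\ge \alpha_{\min}$ for every $s$, and letting $s\to-\infty$ gives $P'_\varphi(-\infty)\ge \alpha_{\min}$. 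The symmetric argument with $\beta=\alpha_{\max}$ shows $P'_\varphi(s)\le\alpha_{\max}$ for all $s$, whence $P'_\varphi(+\infty)\le\alpha_{\max}$. Finally, the strict inequality $P'_\varphi(-\infty)<P'_\varphi(+\infty)$ is immediate from part (i), since the strictly increasing analytic function $P'_\varphi$ must have distinct limits at $\pm\infty$ (each limit exists in $[\alpha_{\min},\alpha_{\max}]$ by what we just proved, and equality of the two would force $P'_\varphi$ to be constant, again contradicting (i)).

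The only delicate step is justifying that $P'_{\varphi-\alpha_{\min}}(s)\ge 0$ for all $s$ (rather than merely at one point) from Remark~\ref{remark}; but since Remark~\ref{remark} asserts monotonicity of the pressure on all of $\R$ and $P_{\varphi-\alpha_{\min}}$ is differentiable, the sign of the derivative is controlled globally. Everything else reduces to invoking already-established analyticity, convexity, and the translation identity.
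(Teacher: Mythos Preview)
Your proof is correct and follows essentially the same route as the paper: part (i) via convexity, analyticity, and Proposition~\ref{prop cte}, and part (ii) via the translation identity of Theorem~\ref{thm relation pressure} together with the monotonicity of $s\mapsto P_{\varphi-\alpha_{\min}}(s)$ from Remark~\ref{remark}. The only difference is cosmetic: you spell out the $\alpha_{\max}$ half of (ii) and the passage from monotonicity to nonnegativity of the derivative, whereas the paper leaves these as ``similarly proved.''
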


\begin{proof}

(i) {\em $P'_\varphi(s)$ is strictly increasing on $\R$.} We know
that $P'_\varphi$ is increasing on $\R$ as $P_\varphi$ is convex on
$\R$. Suppose that $P'_\varphi$ is not strictly increasing on $\R$.
Then there exists an interval $[a,b]$ with $a<b$ such that
$P'_\varphi$ is constant on $[a,b]$. On the other hand, we know that
$P_\varphi$ is analytic and so is $P'_\varphi$. Therefore
$P'_\varphi$ must be constant on the whole line $\mathbb{R}$. It is
impossible by Proposition \ref{prop cte} as $\varphi$ is supposed to
be no constant on $S^\ell$.


(ii)\ {\em  $\alpha_{\min }\leq
P'_{\varphi}(-\infty)<P'_{\varphi}(+\infty)\leq \alpha_{\max}$}. The
strict inequality $P'_{\varphi}(-\infty)<P'_{\varphi}(+\infty)$ is
implied by (i). Let us prove the first inequality. The third
inequality can be similarly proved. By  Theorem \ref{thm
relation pressure}, we have
$$P_{\varphi}(s)=\alpha_{\min}s+P_{\varphi-\alpha_{\min}}(s).$$
By Remark \ref{remark}, the function $s\mapsto
P_{\varphi-\alpha_{\min}}(s)$ is increasing. Thus we have
$$P'_{\varphi}(s)=\alpha_{\min}+P'_{\varphi-\alpha_{\min}}(s)\geq \alpha_{\min}$$
which holds for all $s\in \R$.  Letting $s\to -\infty$, we get
$$\alpha_{\min }\leq P'_{\varphi}(-\infty).$$
\end{proof}

To finish this section, we announce the following results concerning the extremal values of $P'_\varphi$ at infinite. Its proof will be given in Section \ref{derivative pressure infinite}.

\begin{theorem}\label{thm extreme1}
We have the equality $$P'_{\varphi}(-\infty)=\alpha_{\min}$$ if and only if there exists an $x=(x_i)_{i=1}^\infty\in \Sigma_m$ such that $$\varphi(x_k,x_{k+1},\cdots,x_{k+\ell-1})=\alpha_{\min},\
\forall k\geq 1.$$ We have an analogue criterion for
$P'_{\varphi}(+\infty)=\alpha_{\max}.$
\end{theorem}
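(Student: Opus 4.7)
The plan is to reduce via Theorem~\ref{thm relation pressure} to the case $\alpha_{\min}=0$, so that $\varphi\ge 0$ and the assertion becomes: $P'_\varphi(-\infty)=0$ if and only if there exists $x\in\Sigma_m$ with $\varphi(x_k,\ldots,x_{k+\ell-1})=0$ for all $k\ge 1$. I encode this combinatorially by the directed graph $\Gamma$ on $S^{\ell-1}$ having an edge from $a$ to $Ta\cdot j$ exactly when $\varphi(a,j)=0$; since $S^{\ell-1}$ is finite, existence of such an $x$ is equivalent to $\Gamma$ containing a cycle.

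For the ``if'' direction, assuming $\Gamma$ contains a cycle $a^{(1)}\to\cdots\to a^{(T)}\to a^{(1)}$, I would retain only the cycle edges in the fixed-point identity $\psi_s(a^{(i)})^q=\mathcal{L}_s\psi_s(a^{(i)})$ to obtain $\psi_s(a^{(i)})^q\ge\psi_s(a^{(i+1)})$. Chaining around the cycle yields $\psi_s(a^{(1)})^{q^T-1}\ge 1$, hence $\psi_s(a^{(1)})\ge 1$ uniformly in $s\in\R$. Propagating this bound down to single letters via $\psi_s(u)=(\sum_j\psi_s(u,j))^{1/q}$ preserves the inequality, so $\sum_{j\in S}\psi_s(j)\ge 1$ and $P_\varphi(s)\ge 0$ for all $s$. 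Since $P_\varphi$ is convex and non-decreasing by Theorem~\ref{thm strict convexity}(ii) and is bounded below, convexity forces $P'_\varphi(-\infty)=0$: otherwise $P'_\varphi(s)\ge P'_\varphi(-\infty)=\epsilon>0$ would drive $P_\varphi$ to $-\infty$.

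For the ``only if'' direction I would argue the contrapositive: assuming $\Gamma$ has no cycle, I would show $P'_\varphi(-\infty)>0$. Acyclicity on the finite vertex set $S^{\ell-1}$ provides a topological sort $S^{\ell-1}=V_0\sqcup\cdots\sqcup V_{N-1}$ with $N\le m^{\ell-1}$, where $V_0$ is the set of sinks. Setting $\delta:=\min\{\varphi(a):\varphi(a)>0\}>0$, the fixed-point equation together with $\varphi\ge 0$ gives the a priori bound $\|\psi_s\|_\infty\le m^{1/(q-1)}$ for $s\le 0$. In $\mathcal{L}_s\psi_s(a)$ the $j$'s with $\varphi(a,j)=0$ contribute $\sum_{b:(a,b)\in\Gamma}\psi_s(b)$, while those with $\varphi(a,j)>0$ contribute at most $me^{s\delta}\|\psi_s\|_\infty$. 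Inducting on the level $k$, using $e^{s\delta/q^k}\ge e^{s\delta}$ for $s\le 0$, one establishes $\psi_s(a)\le C_k e^{s\delta/q^{k+1}}$ for $a\in V_k$. Extending down to single letters yields $\psi_s(j)\le Ce^{cs}$ with $c=\delta/q^{N+\ell-2}>0$, whence $P_\varphi(s)\le C'+(q-1)q^{\ell-2}cs$ and $P'_\varphi(-\infty)=\lim_{s\to-\infty}P_\varphi(s)/s\ge(q-1)q^{\ell-2}c>0$.

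The analogue for $P'_\varphi(+\infty)=\alpha_{\max}$ is obtained symmetrically by reducing to $\alpha_{\max}=0$ (so $\varphi\le 0$) and using the monotonicity from Remark~\ref{remark} for $\beta=\alpha_{\max}$. The main technical obstacle lies in the inductive estimate in the contrapositive direction: the effective exponent $\delta/q^k$ degrades by a factor $1/q$ at each topological level, but the finiteness of the depth $N\le m^{\ell-1}$ keeps the final slope $(q-1)q^{\ell-2}c$ strictly positive, which suffices because only the asymptotic slope of $P_\varphi$ determines $P'_\varphi(-\infty)$.
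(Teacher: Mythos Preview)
Your plan is correct and takes a genuinely different route from the paper's proof. The paper argues both directions through the dynamical and measure-theoretic machinery developed earlier: for the ``if'' direction it builds a point $x\in E(\alpha_{\min})$ out of the given sequence and invokes the inclusion $L_\varphi\subset[P'_\varphi(-\infty),P'_\varphi(+\infty)]$ (Proposition~\ref{range}); for the ``only if'' direction it uses a pigeonhole argument to show that on every block of length $m^\ell+1$ some consecutive $\ell$-tuple must miss $\alpha_{\min}$, and then feeds this uniform gain into the Ruelle formula $P'_\varphi(s)=(q-1)^2\sum_k q^{-(k+1)}\sum_{j=0}^{k-1}\E_{\mu_s}\varphi(x_j^{j+\ell-1})$ (Proposition~\ref{prop 1}) to obtain $P'_\varphi(s)\ge\alpha_{\min}+\epsilon$ uniformly in $s$.

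By contrast, you work entirely at the level of the fixed point $\psi_s$ and the pressure, never touching the measures $\mu_s$, the Ruelle formula, or the level sets $E(\alpha)$. Your ``if'' argument extracts a uniform lower bound $\psi_s(a^{(1)})\ge 1$ on the cycle and propagates it to $P_\varphi\ge 0$; your ``only if'' argument topologically sorts the acyclic graph $\Gamma$ and inducts through the levels to obtain an upper bound $\psi_s(a)\le C_k e^{s\delta/q^{k+1}}$, yielding an asymptotic linear upper bound on $P_\varphi$ with strictly positive slope. This is more elementary and self-contained: it requires only Theorems~\ref{thm relation pressure} and~\ref{thm strict convexity}(ii) and the convexity of $P_\varphi$, and could in principle be proved before Section~\ref{Proof thm principal}. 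The paper's approach, on the other hand, gives a cleaner explicit bound $P'_\varphi(-\infty)\ge\alpha_{\min}+\delta(q-1)^2\sum_{k\ge m^\ell+1}q^{-(k+1)}\lfloor k/(m^\ell+1)\rfloor$ and illustrates how the Ruelle formula controls $P'_\varphi$ directly, which is conceptually in keeping with the thermodynamic formalism developed in the paper. Both arguments ultimately rest on the same combinatorial fact---acyclicity of $\Gamma$ bounds the length of $\alpha_{\min}$-runs---but exploit it through different analytic channels.
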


\begin{remark}
 We have a proof of three pages by combinatorially  analyzing
$P_\varphi$. But we would like to give another proof in Section
\ref{derivative pressure infinite} (see Proposition \ref{thm extreme2}),
which is shorter, more intuitive and easier to understand.
\end{remark}

\bigskip

\section{Gibbs property of   $\P_{\mu_s}$}
 In the following we are going to establish a relation between the mass $\P_{\mu_s}([x_1^n])$
and the multiple ergodic sum $\sum_{j=1}^{J} \varphi(x_j\cdots x_{jq^{\ell-1}})$. This can be regarded as the Gibbs property of the measure $\P_{\mu_s}$.

\subsection{Dependence of the Local behavior of $\P_{\mu_s}$  on $\varphi(x_j\cdots x_{jq^{\ell-1}})$.}
There is an explicit relation between the mass $\P_{\mu_s}([x_1^n])$
and the multiple ergodic sum $\sum_{j=1}^{\lfloor\frac{n}{q^{\ell-1}}\rfloor} \varphi(x_j\cdots x_{jq^{\ell-1}})$.
Before stating this relation, we introduce some notation.

Recall that for any integer $k\in \N^*$ we denote by $i(k)$ the unique integer such that
$$k=i(k)q^j, \ \ \ q\nmid i(k).$$
 We associate to $k$ a finite set of integers $\lambda_k$ defined by
 $$\lambda_k\ := \left\{ \begin{array}{lcl}
                           \{i(k),i(k)q,\cdots,i(k)q^j\} & {\rm if } & j<\ell-1 \\
                           \{i(k)q^{j-(\ell-1)},\cdots,i(k)q^j\}  & {\rm if } & j\geq\ell-1.
                         \end{array}\right.
 $$
We define $\lambda_\alpha$ to be the empty set if $\alpha$ is not an integer.
For any sequence $x=(x_i)_{i=1}^\infty \in \Sigma_m$, we denote by $x_{|_{\lambda_k}}$ the restriction of $x$ on $\lambda_k$.

For $x\in \Sigma_m$, we define $$B_n(x)=\sum_{j=1}^n\psi_s(x_{|_{\lambda_j}}).$$
The following basic formula  is a  consequence of the definitions of $\mu_s$ and $\mathbb{P}_{\mu_s}$.

\begin{proposition}\label{prop basic formula loc dim}
We have
$$\log \P_{\mu_s}([x_1^n])=s\sum_{j=1}^{\left\lfloor\frac{n}{q^{\ell-1}}\right\rfloor}\varphi(x_j\cdots x_{jq^{\ell-1}})-(n-\lfloor n/q\rfloor) q \log \psi_s (\emptyset)- qB_{\frac{n}{q}}(x)+ B_n(x).$$
\end{proposition}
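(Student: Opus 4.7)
The plan is to expand $\log\P_{\mu_s}([x_1^n])=\sum_{i\le n,\,q\nmid i}\log\mu_s([x_1^n|_{\Lambda_i(n)}])$ using the $(\ell-1)$-Markov structure of $\mu_s$ and then regroup the resulting sums via the partition of $\{1,\ldots,n\}$ provided by Lemma \ref{lem dec}(2).

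The first step is to establish a uniform cylinder formula valid for every $K\ge 1$ and every word $a_1\cdots a_K\in S^K$:
\begin{equation*}
\log\mu_s([a_1\cdots a_K])=s\sum_{j=\ell}^{K}\varphi(a_{j-\ell+1},\ldots,a_j)+\sum_{j=1}^{K}\log\psi_s(w_j)-q\sum_{j=0}^{K-1}\log\psi_s(w_j),
\end{equation*}
where $w_j$ denotes the suffix of $a_1\cdots a_j$ of length $\min(j,\ell-1)$ (with the convention $w_0=\emptyset$ and $\psi_s(\emptyset)^q:=\sum_{l\in S}\psi_s(l)$). For $K\ge\ell-1$ this follows by substituting the explicit $\pi_s$ and $Q_s$ into $\log\pi_s([a_1\cdots a_{\ell-1}])+\sum_{j=\ell}^K\log Q_s([a_{j-\ell+1}\cdots a_{j-1}],[a_{j-\ell+2}\cdots a_j])$ and collecting $\psi_s$-terms; the $\log\pi_s$ piece provides the short-word contributions $w_0,w_1,\ldots,w_{\ell-2}$ (in both sums), and each transition factor contributes one positive $\log\psi_s(w_j)$ together with one negative $q\log\psi_s(w_{j-1})$. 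For $K<\ell-1$, $\mu_s([a_1\cdots a_K])$ is the marginal $\sum_{a_{K+1},\ldots,a_{\ell-1}}\pi_s([a_1\cdots a_{\ell-1}])$, which collapses by repeated use of $\psi_s(a)^q=\sum_{j\in S}\psi_s(a,j)$ to $\prod_{j=1}^{K}\psi_s(a_1\cdots a_j)/\psi_s(a_1\cdots a_{j-1})^q$; taking logarithms produces the same displayed formula with an empty $\varphi$-sum.

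Next, I apply this formula to the block $(a_1,\ldots,a_{k_i}):=(x_i,x_{iq},\ldots,x_{iq^{k_i-1}})$, where $k_i=\sharp\Lambda_i(n)$, and sum over all $i\le n$ with $q\nmid i$. The substitution $k:=iq^{j-1}$ is a bijection between the pairs $(i,j)$ with $q\nmid i$ and $1\le j\le k_i$ and the integers $1\le k\le n$ (this is exactly Lemma \ref{lem dec}(2)); under this substitution $w_j$ in the $i$-th block is precisely $x|_{\lambda_k}$, since $\lambda_k$ is designed to pick out the indices $\{i(k)q^{\max(0,v-\ell+2)},\ldots,i(k)q^{v}\}$ with $v=v_q(k)=j-1$. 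Similarly, putting $k':=iq^{j-\ell}$ converts the $\varphi$-argument into $\varphi(x_{k'},x_{k'q},\ldots,x_{k'q^{\ell-1}})$, and the constraint $\ell\le j\le k_i$ is equivalent to $k'q^{\ell-1}\le n$. Hence the $\varphi$-part reassembles as $s\sum_{k'=1}^{\lfloor n/q^{\ell-1}\rfloor}\varphi(x_{k'},x_{k'q},\ldots,x_{k'q^{\ell-1}})$; the first $\psi_s$-sum (over $1\le j\le k_i$) reassembles as $\sum_{k=1}^n\log\psi_s(x|_{\lambda_k})=B_n(x)$; and the second $\psi_s$-sum (over $0\le j\le k_i-1$) splits into its $j=0$ contribution, giving one copy of $\log\psi_s(\emptyset)$ for each of the $n-\lfloor n/q\rfloor$ valid $i$'s, and its $j\ge 1$ contribution, which under the same bijection runs over those $k=iq^{j-1}$ with $kq\le n$, i.e.\ $1\le k\le\lfloor n/q\rfloor$, producing $B_{\lfloor n/q\rfloor}(x)$. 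Multiplying the second $\psi_s$-sum by $-q$ and combining the three pieces yields exactly the proposition.

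The only real content is the unified cylinder formula in step one; aligning the $\log\pi_s$ piece with the $\log Q_s$ pieces hinges on identifying the state $a_{j-\ell+1}\cdots a_{j-1}$ preceding the $j$-th transition with $w_{j-1}$, so that the $+\log\psi_s(w_j)$ and $-q\log\psi_s(w_{j-1})$ telescope cleanly across the boundary between the initial law and the transitions. After that everything is bookkeeping: the definition of $\lambda_k$ is tailored exactly so that $w_j=x|_{\lambda_k}$ when $k=iq^{j-1}$ and the block equals $x_1^n|_{\Lambda_i(n)}$, which is what ties the $(\ell-1)$-Markov formula to the telescopic product formula.
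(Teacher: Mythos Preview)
Your proof is correct and follows the same route as the paper: expand each block $\log\mu_s([x_1^n|_{\Lambda_i(n)}])$ via the explicit formulas for $\pi_s$ and $Q_s$, then reindex over $k\in\{1,\ldots,n\}$ to assemble the $\varphi$-sum, $B_n$, and $B_{\lfloor n/q\rfloor}$. Your unified cylinder formula is a cleaner packaging of the paper's case split ($\sharp\Lambda_i(n)\le\ell-1$ versus $\ge\ell$), and your description of $\lambda_k$ with lower exponent $\max(0,v-\ell+2)$ rather than the paper's stated $v-(\ell-1)$ is in fact the version needed for $\psi_s(x|_{\lambda_k})$ to be well-defined---the paper's own use of $\psi_s(x|_{\lambda_k})$ in $B_n$ tacitly requires this correction.
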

\begin{proof}
By the definition of $\P_{\mu_s}$, we have
\begin{equation}\label{unper bounds 1}
\log \P_{\mu_s}([x_1^n])=\sum_{q\nmid i,i\leq n}\log\mu_s([x_1^n|_{\Lambda_i(n)}]).
\end{equation}
However, by the definition of $\mu_s$, if  $\sharp\Lambda_i(n)\leq \ell-1$, we have
\begin{equation}\label{unper bounds 1_1}
\log\mu_s([x_1^n|_{\Lambda_i(n)}])=\sum_{j=0}^{\sharp\Lambda_i(n)-1}\log\frac{\psi_s(x_i,\cdots,x_{iq^j})}{\psi_s^q(x_i,\cdots,x_{iq^{j-1}})}=\sum_{k\in \Lambda_i(n)}\log\frac{\psi_s(x_{|_{\lambda_k}})}{\psi_s^q(x_{|_{\lambda_{k/q}}})}.
\end{equation}

If $\sharp\Lambda_i(n)\geq \ell$, $\log\mu_s([x_1^n|_{\Lambda_i(n)}])$ is equal to
$$ \sum_{j=0}^{\ell-2}\log\frac{\psi_s(x_i,\cdots,x_{iq^j})}{\psi_s^q(x_i,\cdots,x_{iq^{j-1}})}+\sum_{j=\ell-1}^{\sharp\Lambda_i(n)-1}\log\frac{\psi_s(x_{iq^{j-\ell+2}},\cdots,x_{iq^j})e^{s\varphi(x_{iq^{j-\ell+1}},\cdots,x_{iq^j})}}{\psi_s^q(x_{iq^{j-\ell+1}},\cdots,x_{iq^{j-1}})}
$$
$$ =\sum_{j=0}^{\sharp\Lambda_i(n)-1}\log\frac{\psi_s(x_i,\cdots,x_{iq^j})}{\psi_s^q(x_i,\cdots,x_{iq^{j-1}})} + s\sum_{j=\ell-1}^{\sharp\Lambda_i(n)-1}\varphi(x_{iq^{j-\ell+1}},\cdots,x_{iq^j}),
$$
 in other words,
 \begin{equation}\label{unper bounds 1_2}
\mu_s([x_1^n|_{\Lambda_i(n)}])
=  \sum_{k\in \Lambda_i(n)}\log\frac{\psi_s(x_{|_{\lambda_k}})}{\psi_s^q(x_{|_{\lambda_{\frac{k}{q}}}})}+s\sum_{k\in \Lambda_i(n),k\leq n}\varphi(x_{|_{\lambda_k}}).
\end{equation}
Substituting (\ref{unper bounds 1_1}) and (\ref{unper bounds 1_2}) into (\ref{unper bounds 1}), we get
\begin{equation}\label{upper bounds 2}
\log \P_{\mu_s}([x_1^n])= S_n' + s S_n''
\end{equation}
where
$$S_n' =
\sum_{q\nmid i,i\leq n} \sum_{k\in \Lambda_i(n)}\log\frac{\psi_s(x_{|_{\lambda_k}})}{\psi_s^q(x_{|_{\lambda_{\frac{k}{q}}}})}
$$
$$
S_n'' =   \sum_{q\nmid i,i\leq n} \sum_{k\in \Lambda_i(n),k\leq n}\varphi(x_{|_{\lambda_k}}).
$$

For any fixed $i$ with $q\nmid i$, we write
$$
  \sum_{k\in \Lambda_i(n)}\log\frac{\psi_s(x_{|_{\lambda_k}})}{\psi_s^q(x_{|_{\lambda_{\frac{k}{q}}}})}
  = \sum_{k\in \Lambda_i(n)}\log \psi_s(x_{|_{\lambda_k}}) - q \sum_{k\in \Lambda_i(n)}\log \psi_s (x_{|_{\lambda_{\frac{k}{q}}}}).
$$
Recall that if we denote $j_0 = \lfloor \log_q \frac{n}{i}\rfloor$ the largest integer such that $iq^{j_0}\le n$, then
$$
    \Lambda_i(n)=\{i, i q, iq^2, \cdots, iq^{j_0}\}.$$
If $k= i$, we have $x_{k/q}=\emptyset$. If $k =iq^j$ with $1\le j\le j_0$, we have $k/q = iq^{j-1}$ which belongs to $\Lambda_i(n)$.
In the following we formally write
$$
  \Lambda_i(n/q)=\{i, i q, iq^2, \cdots, iq^{j_0-1}\}.
$$
Then we can write
$$\sum_{k\in \Lambda_i(n)}\log\frac{\psi_s(x_{|_{\lambda_k}})}{\psi_s^q(x_{|_{\lambda_{\frac{k}{q}}}})}=(1-q)\sum_{k\in \Lambda_i(\frac{n}{q})}\psi_s(x_{|_{\lambda_k}}) - q \log \psi_s (\emptyset) + \sum_{k\in \Lambda_i(n),kq>n}\psi_s(x_{|_{\lambda_k}}).$$
Notice that there  is only one term in the last sum, which corresponds to $k = iq^{j_0}$.  Now we take sum over $i$ to get
\begin{eqnarray*}
S_n'
= (1-q)\sum_{k\leq \frac{n}{q}}\psi_s(x_{|_{\lambda_k}}) - q(n-\lfloor n/q\rfloor)\log \psi_s (\emptyset)+  \sum_{k> \frac{n}{q}}\psi_s(x_{|_{\lambda_k}}),
\end{eqnarray*}
because $\sharp\{i\leq n,q\nmid i\} = n-\lfloor n/q\rfloor$ and
\begin{eqnarray*}
\sum_{i\leq n,q\nmid i}\sum_{k\in \Lambda_i(\frac{n}{q})}\psi_s(x_{|_{\lambda_k}})
= \sum_{k\leq \frac{n}{q}}\psi_s(x_{|_{\lambda_k}}),
\end{eqnarray*}
\begin{eqnarray*}
\sum_{i\leq n,q\nmid i}\sum_{k\in \Lambda_i(n),kq>n}\psi_s(x_{|_{\lambda_k}})
=\sum_{k> \frac{n}{q}}\psi_s(x_{|_{\lambda_k}}).
\end{eqnarray*}
Recall that $B_n(x)=\sum_{j=1}^n\psi_s(x_{|_{\lambda_j}}).$
We can rewrite
\begin{eqnarray*}
    (1-q)\sum_{k\leq \frac{n}{q}}\psi_s(x_{|_{\lambda_k}}) + \sum_{k> \frac{n}{q}}\psi_s(x_{|_{\lambda_k}})
  &= &   -q\sum_{k\leq \frac{n}{q}}\psi_s(x_{|_{\lambda_k}})  +  \sum_{k\le n  }\psi_s(x_{|_{\lambda_k}})
 \\
 & = &- qB_{\frac{n}{q}}(x) + B_n(x).
\end{eqnarray*}
Thus
\begin{eqnarray*}
 S_n ' = -q(n-\lfloor n/q\rfloor)\log \psi_s(\emptyset)- qB_{\frac{n}{q}}(x) + B_n(x).
\end{eqnarray*}
On the other hand, we have
$$S_n''= \sum_{q\nmid i,i\leq n}\ \ \sum_{k\in \Lambda_i(n),k\leq n}\varphi(x_{|_{\lambda_k}})=\sum_{k\leq n}\varphi(x_{|_{\lambda_k}})=\sum_{j=1}^{\left\lfloor\frac{n}{q^{\ell-1}}\right\rfloor}\varphi(x_j\cdots x_{jq^{\ell-1}}). $$
Substituting these expressions of $S_n'$ and $S_n''$  into  (\ref{upper bounds 2}), we get the desired result.
\end{proof}

\bigskip

 \section{Proof of theorem \ref{thm principal}: computation of $\dim_H E(\alpha)$ }\label{Proof thm principal}

We will use the measure $\P_{\mu_s}$ to estimate the dimensions of
levels sets $E(\alpha)$. Actually, for a given $\alpha$, there is
some $s$ such that $\P_{\mu_s}$ is a nice Frostman type measure
sitting on $E(\alpha)$.
  First of all, let us calculate the
local dimensions of $\P_{\mu_s}$.

\subsection{Upper bounds of local dimensions of $\P_{\mu_s}$ on level sets}

We define
$$E^+(\alpha):=\left\{x\in \Sigma_m :
\limsup_{n\to\infty}\frac{1}{n}\sum_{k=1}^{n}\varphi(x_k,x_{kq},\cdots,x_{kq^{\ell-1}})\leq\alpha\right\},$$
and
$$E^-(\alpha):=\left\{x\in \Sigma_m :
\liminf_{n\to\infty}\frac{1}{n}\sum_{k=1}^{n}\varphi(x_k,x_{kq},\cdots,x_{kq^{\ell-1}})
\geq\alpha\right\}.$$ It is clear that
$$
     E(\alpha) = E^+(\alpha) \cap E^-(\alpha).
$$

 In this subsection we
will obtain  upper bounds for local dimensions of $\P_{\mu_s}$ on
the sets $E^+ (\alpha)$ and $E^-(\alpha)$. The following elementary
result will be useful for the estimation of local dimensions
of $\P_{\mu_s}$.

\begin{lemma}\label{upperbounds lemma}
Let $(a_n)_{n\geq 1}$ be a bounded sequence of non-negative real numbers. Then
\[\liminf_{n\to\infty}\left(a_{\lfloor n/q \rfloor}-a_n\right)\leq 0.\]
\end{lemma}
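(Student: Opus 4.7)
The plan is to argue by contradiction. Suppose that
\[
  \liminf_{n\to\infty}\bigl(a_{\lfloor n/q\rfloor}-a_n\bigr) > 0.
\]
Then there exist $\delta>0$ and an integer $N\ge 1$ such that for every $n\ge N$ one has the one-step improvement
\[
  a_{\lfloor n/q\rfloor}\;\ge\; a_n + \delta.
\]

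Next I would iterate this inequality along the descending chain $n_0=n$, $n_{j+1}=\lfloor n_j/q\rfloor$. Since $n_j\le n/q^j$, the chain reaches values below $N$ after some $K\asymp \log_q(n/N)$ steps; let $K=K(n)$ be the largest index with $n_{K-1}\ge N$. As long as $n_j\ge N$, the one-step inequality applies and gives $a_{n_{j+1}}\ge a_{n_j}+\delta$, so by induction
\[
  a_{n_K}\;\ge\; a_n + K\delta.
\]
Set $M:=\sup_{k\ge 1}a_k<\infty$ (finite by boundedness). Combining this with $a_n\ge 0$ yields $K\delta\le a_{n_K}-a_n\le M$, i.e.\ $K\le M/\delta$ uniformly in $n$. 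This is the contradiction we want, because $K(n)\to\infty$ as $n\to\infty$ (the chain must take at least $\lfloor \log_q(n/N)\rfloor$ steps to drop below $N$).

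The argument is really just a geometric-descent pigeonhole, so there is no serious obstacle; the only point to keep honest is the bookkeeping that ensures the one-step inequality can actually be applied $K$ times, which is handled by stopping the iteration at the first index falling below $N$. The non-negativity hypothesis enters only to bound $a_n$ from below by $0$; the boundedness hypothesis enters only to bound $a_{n_K}$ from above by $M$, so both hypotheses are used exactly once.
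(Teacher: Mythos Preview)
Your proof is correct. The paper takes a closely related but slightly slicker route: rather than arguing by contradiction from a general starting point, it restricts to the subsequence $n=q^l$, where $\lfloor n/q\rfloor=q^{l-1}$ exactly. Setting $b_l=a_{q^{l-1}}-a_{q^l}$, the telescoping identity $b_1+\cdots+b_L=a_1-a_{q^L}$ and boundedness give that the Ces\`aro average of the $b_l$ tends to $0$, whence $\liminf_l b_l\le 0$, and this already bounds the full $\liminf$. The underlying mechanism is the same as yours (accumulate increments along the chain $n\mapsto\lfloor n/q\rfloor$ and compare with the global bound), but the specific choice $n=q^l$ removes the floor-function bookkeeping you had to do when tracking $K(n)$. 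A minor remark: neither argument actually needs non-negativity; boundedness alone gives $a_{n_K}-a_n\le 2M$, which is all your inequality $K\delta\le a_{n_K}-a_n$ requires.
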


\begin{proof}
Let $b_l=a_{q^{l-1}}-a_{q^{l}}=a_{\frac{q^l}{q}}-a_{q^{l}}$ for
$l\in\N^*$. Then the boundedness implies
$$\lim_{l \to \infty }\frac{b_1+\cdots+b_l}{l}= \lim_{l \to \infty } \frac{a_1-a_{q^l}}{l}=0.$$
This in turn implies  $\liminf_{l\to \infty }b_l\leq 0$ so that
$$\liminf_{l\to \infty }\left(a_{\lfloor n/q \rfloor}-a_n\right)\leq \liminf_{l\to \infty }b_l\leq 0.$$
\end{proof}

\begin{proposition}\label{prop loc dim upper bound}
For every $x\in E^+(\alpha)$, we have
$$\forall s\leq0,  \ \ \ \underline{D}(\P_{\mu_s},x)\leq
\frac{P(s) -\alpha s}{q^{\ell-1}\log m}.$$
For every $x\in E^-(\alpha)$, we have
$$\forall s\geq0, \ \ \ \underline{D}(\P_{\mu_s},x)\leq
\frac{P(s) -\alpha s}{q^{\ell-1}\log m}.$$
Consequently, for every $x\in E(\alpha)$, we have
$$\forall s\in \R, \ \ \ \underline{D}(\P_{\mu_s},x)\leq
\frac{P(s) -\alpha s}{q^{\ell-1}\log m}.$$
\end{proposition}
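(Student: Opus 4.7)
The strategy is to insert the Gibbs-type identity of Proposition~\ref{prop basic formula loc dim} into the definition
\[
\underline{D}(\P_{\mu_s},x)=\liminf_{n\to\infty} \frac{-\log \P_{\mu_s}([x_1^n])}{n\log m},
\]
and track the three resulting terms separately. Specifically,
\[
-\log \P_{\mu_s}([x_1^n]) = -s\sum_{j=1}^{\lfloor n/q^{\ell-1}\rfloor}\varphi(x_j,\ldots,x_{jq^{\ell-1}}) + (n-\lfloor n/q\rfloor)q\log\psi_s(\emptyset) + qB_{\lfloor n/q\rfloor}(x)-B_n(x),
\]
so after division by $n\log m$ the task reduces to bounding the $\liminf$ of a sum of three pieces.

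The middle ``pressure'' term is deterministic and converges: since $(n-\lfloor n/q\rfloor)/n\to(q-1)/q$ and since $\psi_s(\emptyset)^q=\sum_{j\in S}\psi_s(j)$ gives $(q-1)q\log\psi_s(\emptyset)=P_\varphi(s)/q^{\ell-2}$, this term tends to $P_\varphi(s)/(q^{\ell-1}\log m)$. The ergodic-sum term equals $\frac{-s\lfloor n/q^{\ell-1}\rfloor}{n\log m}\,A_{\lfloor n/q^{\ell-1}\rfloor}\varphi(x)$, with $\lfloor n/q^{\ell-1}\rfloor/n\to q^{-(\ell-1)}$. For $s\le 0$ and $x\in E^+(\alpha)$, the sign $-s\ge 0$ combined with $\limsup_N A_N\varphi(x)\le\alpha$ gives the $\limsup_n$ of this term $\le -s\alpha/(q^{\ell-1}\log m)$; for $s\ge 0$ and $x\in E^-(\alpha)$, the sign $-s\le 0$ combined with $\liminf_N A_N\varphi(x)\ge\alpha$ yields the same upper bound, since multiplication by the non-positive $-s$ reverses the inequality.

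The main obstacle is the third, ``telescopic boundary'' term $c_n := (qB_{\lfloor n/q\rfloor}(x)-B_n(x))/(n\log m)$, which is a priori not controlled in sign. Setting $a_n = B_n(x)/n$, I would observe that $a_n$ is bounded because $\log\psi_s$ takes only finitely many values on the finite set $\bigcup_{1\leq k\leq \ell-1}S^k$; and using $q\lfloor n/q\rfloor/n\to 1$ one finds $c_n\log m = a_{\lfloor n/q\rfloor}-a_n+o(1)$. Lemma~\ref{upperbounds lemma} then yields $\liminf_n c_n\le 0$, so a subsequence $(n_k)$ can be selected along which $c_{n_k}$ tends to some limit $\le 0$. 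Assembling the three pieces along this subsequence gives
\[
\liminf_n \frac{-\log \P_{\mu_s}([x_1^n])}{n\log m}\le \limsup_k \frac{-\log \P_{\mu_s}([x_1^{n_k}])}{n_k\log m}\le \frac{P_\varphi(s)-s\alpha}{q^{\ell-1}\log m},
\]
which is the claimed bound on $\underline{D}(\P_{\mu_s},x)$. The consequence for $x\in E(\alpha)=E^+(\alpha)\cap E^-(\alpha)$ follows by choosing the appropriate case according to the sign of $s\in\R$.
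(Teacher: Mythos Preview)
Your proof is correct and follows essentially the same route as the paper's: you invoke Proposition~\ref{prop basic formula loc dim}, handle the pressure term and the ergodic-sum term exactly as the paper does, and use Lemma~\ref{upperbounds lemma} on the bounded sequence $a_n=B_n(x)/n$ to control the boundary term. If anything, your explicit subsequence argument for combining the three pieces is slightly more careful than the paper's presentation, which jumps directly from $\liminf_n c_n\le 0$ to dropping that term inside the overall $\liminf$.
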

\begin{proof} The proof is based  on
Proposition \ref{prop basic formula loc dim}, which implies that for any $x\in \Sigma_m$ and any $n\geq 1$ we have
 {\setlength\arraycolsep{2pt}
 \begin{eqnarray*}
 -\frac{\log\P_{\mu_s}([x_1^n])}{n}& = & -\frac{s}{n}\sum_{j=1}^{\lfloor\frac{n}{q^{\ell-1}}\rfloor}\varphi(x_j\cdots x_{jq^{\ell-1}})+
  q \frac{n-\lfloor n/q\rfloor}{n}\log \psi_s(\emptyset)
 \nonumber\\
 & & +\frac{B_{\frac{n}{q}}(x)}{\frac{n}{q}} -\frac{B_n(x)}{n}.
 \end{eqnarray*}}
Since  the function $\psi_s$ is bounded, so is the sequence $(B_n(x)/n)_n$. Then,  by Lemma \ref{upperbounds lemma}, we have
$$\liminf_{n\to \infty}\frac{B_{\frac{n}{q}}(x)}{\frac{n}{q}} -\frac{B_n(x)}{n}\leq 0.$$
Therefore
\begin{equation*}\label{UpperEstimateOfD}
\underline{D}(\P_{\mu_s},x)\le \liminf_{n\to \infty} -\frac{s}{n
\log
m}\sum_{j=1}^{\lfloor\frac{n}{q^{\ell-1}}\rfloor}\varphi(x_j\cdots
x_{jq^{\ell-1}})+
 (q -1) \log_m \psi_s(\emptyset).
 \end{equation*}

Now suppose that $x\in E^+(\alpha)$ and $s\leq 0$. Since
$$\liminf_{n\to \infty}\frac{1}{n}\sum_{j=1}^{\lfloor\frac{n}{q^{\ell-1}}\rfloor}\varphi(x_j\cdots x_{jq^{\ell-1}})\leq  \limsup_{n\to \infty}\frac{1}{n}\sum_{j=1}^{\lfloor\frac{n}{q^{\ell-1}}\rfloor}\varphi(x_j\cdots x_{jq^{\ell-1}})\leq\frac{\alpha}{q^{\ell-1}},$$
we have
\begin{eqnarray*}
\liminf_{n\to \infty}-\frac{s}{n}\sum_{j=1}^{\lfloor\frac{n}{q^{\ell-1}}\rfloor}\varphi(x_j\cdots x_{jq^{\ell-1}}) &= & -s\liminf_{n\to \infty}\frac{1}{n}\sum_{j=1}^{\lfloor\frac{n}{q^{\ell-1}}\rfloor}\varphi(x_j\cdots x_{jq^{\ell-1}}) \\
 & \leq& \frac{-s\alpha}{q^{\ell-1}},
\end{eqnarray*}
so that
$$\underline{D}(\P_{\mu_s},x) \leq -\frac{\alpha s}{q^{\ell-1}\log m }+(q-1)\log_m \psi_s(\emptyset)=\frac{P(s) - \alpha s }{q^{\ell-1}\log m}, $$
where the last equation is due to
$$P(s)=(q-1)q^{\ell -2}\log\sum_{j\in S^\ell}\psi_s(j)=(q-1)q^{\ell -2}q\log\psi_s(\emptyset). $$

By an analogue argument, we can prove the same result for $x\in E^-(\alpha)$ and $s\geq 0$.
\end{proof}

\subsection{Range of $L_\varphi$}
Recall that $L_\varphi$ is the set of $\alpha$ such that $E(\alpha)\neq \emptyset$.

\begin{proposition} \label{range}
We have
$L_\varphi\subset [P'_\varphi(-\infty),P'_\varphi(+\infty)].$
\end{proposition}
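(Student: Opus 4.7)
The plan is to combine the upper bound for the local dimension of $\P_{\mu_s}$ from Proposition~\ref{prop loc dim upper bound} with the trivial lower bound $\underline{D}(\P_{\mu_s},x)\ge 0$, and then to exploit the convexity of $P_\varphi$.

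First I would fix $\alpha\in L_\varphi$ and pick any $x\in E(\alpha)$. Since $\P_{\mu_s}$ is a probability measure on $\Sigma_m$, every cylinder has mass at most $1$, hence
$$
     \underline{D}(\P_{\mu_s},x)=\liminf_{n\to\infty}\frac{-\log_m \P_{\mu_s}([x_1^n])}{n}\ge 0.
$$
Applying Proposition~\ref{prop loc dim upper bound} (the last assertion, valid for every $s\in\R$ since $x\in E(\alpha)\subset E^+(\alpha)\cap E^-(\alpha)$), we obtain
$$
    0\;\le\;\underline{D}(\P_{\mu_s},x)\;\le\;\frac{P_\varphi(s)-\alpha s}{q^{\ell-1}\log m}\qquad\text{for every }s\in\R,
$$
so the convex function $s\mapsto P_\varphi(s)-\alpha s$ is non-negative on the whole real line.

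Next I would use the convexity and analyticity of $P_\varphi$ (Theorem~\ref{thm convexity} and Proposition~\ref{analyticity}) to conclude. Let $L_-=P'_\varphi(-\infty)$ and $L_+=P'_\varphi(+\infty)$; these limits exist in $\R\cup\{\pm\infty\}$ because $P'_\varphi$ is monotone. If $\alpha>L_+$, then $P'_\varphi(s)-\alpha\le L_+-\alpha<0$ for every $s$, so $s\mapsto P_\varphi(s)-\alpha s$ is strictly decreasing and its slope at infinity is bounded above by $L_+-\alpha<0$, forcing $P_\varphi(s)-\alpha s\to -\infty$ as $s\to+\infty$; this contradicts the non-negativity established above. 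Symmetrically, if $\alpha<L_-$ then $P_\varphi(s)-\alpha s\to-\infty$ as $s\to-\infty$, again a contradiction. Hence $L_-\le\alpha\le L_+$, which is exactly the desired inclusion.

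There is no real obstacle here; the proof is essentially a two-line consequence of the upper-bound proposition and the basic monotone behaviour of the derivative of a convex function. The only subtlety is making sure the inequality in Proposition~\ref{prop loc dim upper bound} applies at every $s\in\R$ (positive and negative), which is guaranteed because $x$ belongs to both $E^+(\alpha)$ and $E^-(\alpha)$.
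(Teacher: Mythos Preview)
Your proof is correct and follows essentially the same approach as the paper: both combine the non-negativity of the lower local dimension with the upper bound from Proposition~\ref{prop loc dim upper bound} to deduce $P_\varphi(s)-\alpha s\ge 0$ for all $s$, and then use convexity (the paper via the mean value theorem, you via the monotone derivative bound) to show this fails when $\alpha\notin[P'_\varphi(-\infty),P'_\varphi(+\infty)]$. The only cosmetic difference is the order of presentation.
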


\begin{proof} We prove it by contradiction.
Suppose that $E(\alpha)\neq \emptyset$ for some
$\alpha<P'_\varphi(-\infty)$. Let $x=(x_i)_{i=1}^\infty\in
E(\alpha)$. Then by Proposition \ref{prop loc dim upper bound}, we
have
\begin{equation}\label{prop borne inf de mesure1}
\liminf_{n\to\infty}\frac{-\log_m\P_{\mu_s}([x_1^n])}{n}\leq
\frac{\pv(s)-\alpha s}{q^{\ell-1}\log m}, \  \forall s\in \R.
\end{equation}
On the other hand, by the mean value theorem, we have
\begin{equation}\label{prop borne inferieure de mes2}
\pv(s)-\alpha s=\pv(s)-\pv(0)-\alpha s+\pv(0)=\pv'(\eta_s)s-\alpha s+\pv(0)
\end{equation} for some real number $\eta_s$ between $0$ and $s$. As  $\pv $ is convex, $\pv'$ is increasing on $\R$. If we assume $s<0$, then we have
$$
\pv'(\eta_s) s -\alpha s+\pv(0) \leq \pv'(-\infty) s-\alpha
s+\pv(0)=\left(\pv'(-\infty)-\alpha \right) s +\pv(0).$$ As
$\pv'(-\infty)-\alpha>0$, we deduce from (\ref{prop borne inferieure
de mes2}) that for $s$ small enough (close to $-\infty$), we have
$\pv(s)-\alpha s<0$. Then by (\ref{prop borne inf de mesure1}),  for
$s$ small  enough we obtain
$$\liminf_{n\to\infty}\frac{-\log_m\P_{{\mu_s}}([x_1^n])}{n}<0$$
which implies $\P_{{\mu_s}}([x_1^n])>1$ for  an infinite number of
$n$'s. This is a contradiction to the fact that $\P_{\mu}$ is a
probability measure on $\Sigma_m$. Thus we have prove that for
$\alpha$ such that $E(\alpha)\not=\emptyset$, we have $\alpha \ge
P'(-\infty)$. Similarly we can also prove $\alpha \le P'(+\infty)$.
\end{proof}

As we shall show, we will have the equality $L_\varphi=
[P'_\varphi(-\infty),P'_\varphi(+\infty)]$.

\subsection{Upper bounds of Hausdorff dimensions of level sets}
A upper bound of the Hausdorff dimensions of levels set is a direct consequence of the Billingsley lemma
and of Proposition \ref{prop loc dim upper bound}.
The Billingsley lemma is stated as follows.

\begin{lemma}[see Prop.4.9 in \cite{Fal90}]\label{Billingsley}
Let $E$ be a Borel set in $\Sigma_m$ and let $\nu$ be a finite Borel measure on $\Sigma_m$.\\
\indent \ {\rm (i)}  We have $\dim_H(E)\geq d$ if $\nu(E) > 0$ and $\underline{D}(\nu,x)\geq d$ for $\nu$-a.e $x$.\\
\indent {\rm (ii)}  We have $\dim_H(E) \leq d$ if $\underline{D}(\nu,x)\leq d$ for all $x \in E$.
\end{lemma}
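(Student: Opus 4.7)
The plan is to use the mass distribution principle for (i) and a Vitali-type disjoint-cover argument for (ii), both adapted to the symbolic structure of $\Sigma_m$, where cylinders $[x_1^n]$ of diameter $m^{-n}$ play the role of balls.

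For (i), I would fix an arbitrary $d' < d$ and define, for each $N \geq 1$, the truncation
\begin{equation*}
E_N = \left\{ x \in E : \nu([x_1^n]) \leq m^{-n d'} \text{ for all } n \geq N \right\}.
\end{equation*}
The hypothesis $\underline{D}(\nu,x) \geq d > d'$ for $\nu$-a.e.\ $x \in E$ gives $\nu\bigl(E \setminus \bigcup_N E_N\bigr) = 0$, so by countable additivity $\nu(E_{N_0}) > 0$ for some $N_0$. Then I would use $\nu$ as a Frostman measure on $E_{N_0}$: for any cover of $E_{N_0}$ by cylinders $[w_i]$ of length at least $N_0$ that meet $E_{N_0}$, the truncation bound gives $\nu([w_i]) \leq m^{-|w_i| d'} = \mathrm{diam}([w_i])^{d'}$, hence
\begin{equation*}
\sum_i \mathrm{diam}([w_i])^{d'} \geq \sum_i \nu([w_i]) \geq \nu(E_{N_0}) > 0.
\end{equation*}
Reducing an arbitrary cover of $E_{N_0}$ by small sets to a cylindrical cover in the standard way yields $\mathcal{H}^{d'}(E_{N_0}) > 0$, whence $\dim_H E \geq \dim_H E_{N_0} \geq d'$; letting $d' \uparrow d$ concludes.

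For (ii), I would fix $d' > d$ and use the hypothesis to produce, for every $x \in E$ and every integer $N$, an $n(x,N) \geq N$ with $\nu([x_1^{n(x,N)}]) \geq m^{-n(x,N)\, d'}$. The observation that makes the covering extraction essentially free in the symbolic setting is that any two cylinders of $\Sigma_m$ are either disjoint or nested; thus by keeping only the cylinders maximal for inclusion in the family $\{[x_1^{n(x,N)}] : x \in E\}$ I obtain a pairwise disjoint cylindrical cover $\{[y^{(j)}_1 \cdots y^{(j)}_{n_j}]\}_j$ of $E$ with every $n_j \geq N$. Then
\begin{equation*}
\sum_j \mathrm{diam}([y^{(j)}_1 \cdots y^{(j)}_{n_j}])^{d'} = \sum_j m^{-n_j d'} \leq \sum_j \nu([y^{(j)}_1 \cdots y^{(j)}_{n_j}]) \leq \nu(\Sigma_m) < \infty,
\end{equation*}
uniformly in $N$. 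Letting $N \to \infty$ forces the cover mesh to zero, so $\mathcal{H}^{d'}(E) \leq \nu(\Sigma_m)$, giving $\dim_H E \leq d'$; then $d' \downarrow d$.

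The only genuinely delicate point is in (i): the pointwise hypothesis $\underline{D}(\nu,x) \geq d$ is non-uniform in $x$, and the crux is to trade a little dimension ($d \to d'$) for the uniform decay of $\nu([x_1^n])$ on the positive-measure stratum $E_{N_0}$, after which the Frostman estimate is immediate. In (ii) the corresponding difficulty, namely extracting a disjoint subcover from a Vitali-type family, disappears entirely thanks to the tree structure of cylinders in $\Sigma_m$, so no $5r$-covering machinery is needed.
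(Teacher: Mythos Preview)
The paper does not supply a proof of this lemma at all; it is stated with a reference to Falconer's book (Proposition~4.9 in \cite{Fal90}) and used as a black box. Your argument is correct and is essentially the standard proof one finds in the literature, specialised to the ultrametric setting of $\Sigma_m$: the mass distribution principle after a uniform truncation for~(i), and a disjoint cylindrical cover exploiting the nested-or-disjoint property of cylinders for~(ii). There is nothing to compare against in the paper itself.
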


Recall that
$$
   P_{\varphi}^*(\alpha) = \inf_{s\in \mathbb{R}} (P_\varphi(s)-\alpha s).
$$

\begin{proposition}\label{prop upper bound}
For any $\alpha\in (\pv'(-\infty),\pv'(0))$, we have
$$
\dim_HE^+(\alpha) \leq \inf_{s\leq 0}\frac{1}{q^{\ell-1}\log
m}[-\alpha s+P_\varphi(s)] 
$$
For any $\alpha\in (\pv'(0),\pv'(+\infty))$, we have
$$
\dim_HE^-(\alpha)\leq \inf_{s\geq 0}\frac{1}{q^{\ell-1}\log
m}[-\alpha s+P_\varphi(s)]
$$
In particular, we have
$$\dim_H
E(\alpha)\leq\frac{P_{\varphi}^*(\alpha)}{q^{\ell-1}\log m}.
$$
\end{proposition}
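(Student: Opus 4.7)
The plan is to combine two already established results: the pointwise upper bound on $\underline{D}(\mathbb{P}_{\mu_s}, x)$ from Proposition \ref{prop loc dim upper bound}, and the mass distribution form of Billingsley's lemma (Lemma \ref{Billingsley}(ii)). The scheme is to use the telescopic Gibbs-type measure $\mathbb{P}_{\mu_s}$ as a test measure, read off a dimension bound depending on $s$ from Billingsley, and then optimize over the range of admissible $s$.

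For the first inequality, I fix $\alpha \in (P'_\varphi(-\infty), P'_\varphi(0))$ and any $s \leq 0$. Proposition \ref{prop loc dim upper bound} gives
$$
\underline{D}(\mathbb{P}_{\mu_s}, x) \leq \frac{P_\varphi(s) - \alpha s}{q^{\ell-1}\log m}
\qquad \text{for every } x \in E^+(\alpha).
$$
Applying Lemma \ref{Billingsley}(ii) with $\nu = \mathbb{P}_{\mu_s}$ and $E = E^+(\alpha)$ converts this uniform pointwise bound into
$$
\dim_H E^+(\alpha) \leq \frac{P_\varphi(s) - \alpha s}{q^{\ell-1}\log m},
$$
and taking the infimum over $s \leq 0$ yields the first claim. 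The second inequality is proved in exactly the same way, swapping $E^+$ for $E^-$ and $s \leq 0$ for $s \geq 0$, and invoking the corresponding half of Proposition \ref{prop loc dim upper bound}.

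For the consequence on $E(\alpha)$, note that $E(\alpha) = E^+(\alpha) \cap E^-(\alpha)$, so for every $x \in E(\alpha)$ Proposition \ref{prop loc dim upper bound} provides $\underline{D}(\mathbb{P}_{\mu_s}, x) \leq (P_\varphi(s) - \alpha s)/(q^{\ell-1}\log m)$ for \emph{all} $s \in \mathbb{R}$, without any sign restriction. Billingsley's lemma applied for each such $s$, combined with the definition $P^*_\varphi(\alpha) = \inf_{s \in \mathbb{R}}(P_\varphi(s) - \alpha s)$, delivers the Legendre-transform upper bound.

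There is no real obstacle here: the substantive work has already been done in Proposition \ref{prop basic formula loc dim} (which expresses $\log \mathbb{P}_{\mu_s}([x_1^n])$ in terms of the multiple ergodic sum) and in Proposition \ref{prop loc dim upper bound} (which controls the telescoping remainder $B_{n/q}(x)/(n/q) - B_n(x)/n$ via the elementary $\liminf$ estimate). The only mild subtlety to watch is the sign restriction on $s$ in the first two inequalities; it is inherited from the direction in which the limsup/liminf of the multiple ergodic average feeds through after multiplication by $s$, and it disappears once one restricts to $E(\alpha)$ where both directions become available at once.
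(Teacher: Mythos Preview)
Your proof is correct and matches the paper's approach exactly: the paper itself states that this proposition is ``a direct consequence of the Billingsley lemma and of Proposition \ref{prop loc dim upper bound}'' without giving further details, and you have spelled out precisely that deduction.
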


\bigskip
\bigskip

\subsection{Ruelle type formula} \label{Ruelle formula}
This subsection is mainly devoted  to proving  the following identity which was announced in Theorem \ref{Thm Ruelle formula}.
$$(q-1)^2\sum_{k=1}^\infty\frac{1}{q^{k+1}}\sum_{j=0}^{k-1}\E_{\mu_s}\varphi(x_j,\cdots,x_{j+\ell-1})=P_\varphi'(s).$$
This formula will be
useful for estimating the lower bounds of $\dim_H E(\alpha)$.

We need to do some preparations for proving this result.
First of all, we deduce
 some identities concerning the functions
$\psi_s$.

Recall that $\psi_s(a)$ are defined for $a\in \bigcup_{1\leq k\leq
\ell-1}S^k $. They verify the following equations. For $a\in
S^{\ell-1}$, we have
$$\psi_s^q(a)=\sum_{b\in S}e^{s\varphi(a,b)}\psi_s(Ta,b)$$ and
for $a\in S^k$ ($1\le k \le \ell-2$) we have
$$\psi_s^q(a)=\sum_{b\in S}\psi_s(a,b).$$
Differentiating the two sides of each of the above two equations with respect to $s$, we get for all $a\in S^{\ell-1}$
$$q\psi_s^{q-1}(a)\psi_s'(a)=\sum_{b\in S}e^{s\varphi(a,b)}
\varphi(a,b)\psi_s(Ta,b)+\sum_{b\in
S}e^{s\varphi(a,b)}\psi_s'(Ta,b)$$
and for all $a\in
\bigcup_{1\leq k\leq \ell-2}S^k$
$$q\psi_s^{q-1}(a)\psi_s'(a)=\sum_{b\in S}\psi_s'(a,b).$$ Dividing these equations by
$\psi_s^q(a)$ (for different $a$ respectively), we get

\begin{lemma}
For any $a\in S^{\ell-1}$, we have
\begin{equation}\label{identity1}
q\frac{\psi_s'(a)}{\psi_s(a)}=\sum_{b\in
S}\frac{e^{s\varphi(a,b)}\varphi(a,b)\psi_s(Ta,b)}
{\psi_s^{q}(a)}+\sum_{b\in
S}\frac{e^{s\varphi(a,b)}\psi_s'(Ta,b)}{\psi_s^{q}(a)},
\end{equation}
and for any $a\in \bigcup_{1\leq
k\leq \ell-2}S^k$
\begin{equation}\label{identity2}
q\frac{\psi_s'(a)}{\psi_s(a)}=\sum_{b\in
S}\frac{\psi_s'(a,b)}{\psi_s(a,b)}.
\end{equation}
\end{lemma}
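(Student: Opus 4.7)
The plan is to obtain both identities by term-by-term differentiation of the two functional equations characterizing $\psi_s$, followed by division by $\psi_s^q(a)$. All the preparatory work has essentially been done: Proposition~\ref{analyticity} (and its Corollary) guarantees that $s\mapsto \psi_s(a)$ is real analytic on $\R$ for every $a\in \bigcup_{1\leq k\leq \ell-1}S^k$, so pointwise differentiation of both sides of the defining equations is legitimate; Theorem~\ref{existence-unicity trans-equ} ensures $\psi_s(a)>0$ everywhere, so division by $\psi_s^q(a)$ is legitimate too. There is no substantive obstacle in the proof, only bookkeeping.

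More precisely, for $a\in S^{\ell-1}$ I would start from
\[
\psi_s^q(a)=\sum_{b\in S}e^{s\varphi(a,b)}\psi_s(Ta,b),
\]
differentiate both sides with respect to $s$ using the product rule on the right, obtaining
\[
q\,\psi_s^{q-1}(a)\,\psi_s'(a)=\sum_{b\in S}e^{s\varphi(a,b)}\varphi(a,b)\,\psi_s(Ta,b)+\sum_{b\in S}e^{s\varphi(a,b)}\psi_s'(Ta,b),
\]
and then divide through by $\psi_s^q(a)$ to extract the factor $\psi_s'(a)/\psi_s(a)$ on the left; this gives (\ref{identity1}) directly. For $a\in S^k$ with $1\le k\le \ell-2$, I would start from the recursive defining relation
\[
\psi_s^q(a)=\sum_{b\in S}\psi_s(a,b),
\]
differentiate to get $q\,\psi_s^{q-1}(a)\,\psi_s'(a)=\sum_{b\in S}\psi_s'(a,b)$, and again divide by $\psi_s^q(a)$; this yields (\ref{identity2}) once the right-hand side is rewritten in the stated form using $\psi_s^q(a)=\sum_{b\in S}\psi_s(a,b)$.

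Since both calculations are completely routine, the only thing worth flagging is the uniformity of the differentiation: because $s\mapsto \psi_s(a)$ and $s\mapsto e^{s\varphi(a,b)}$ are analytic and $\psi_s(a)$ is bounded away from zero on each compact $s$-interval (cf.\ the construction via the compact set $\mathcal{F}(S^{\ell-1},[\theta_1,\theta_2])$ in the proof of Theorem~\ref{existence-unicity trans-equ}), the identities hold on all of $\R$ and not merely generically. Hence there is no convergence or regularity subtlety to address, and the lemma follows.
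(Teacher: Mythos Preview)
Your approach is exactly the paper's: differentiate the defining relations for $\psi_s$ and divide by $\psi_s^q(a)$. For (\ref{identity1}) this is complete and matches the paper verbatim.

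One caution on (\ref{identity2}): after dividing $q\psi_s^{q-1}(a)\psi_s'(a)=\sum_{b}\psi_s'(a,b)$ by $\psi_s^q(a)$ you obtain
\[
q\,\frac{\psi_s'(a)}{\psi_s(a)}=\sum_{b\in S}\frac{\psi_s'(a,b)}{\psi_s^q(a)}=\frac{\sum_{b\in S}\psi_s'(a,b)}{\sum_{b\in S}\psi_s(a,b)},
\]
and this is \emph{not} the same as $\sum_{b}\psi_s'(a,b)/\psi_s(a,b)$; the ``rewriting'' you allude to does not produce the displayed form of (\ref{identity2}). The paper's own derivation has the same issue---the denominator in (\ref{identity2}) as printed is a typo for $\psi_s^q(a)$, and it is the version with $\psi_s^q(a)$ that is actually used downstream (e.g.\ in the proof of (\ref{E3})). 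So your argument is correct and identical to the paper's; just be aware that what you have proved is the corrected identity, not the one literally displayed.
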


We denote
$$w(a)=\frac{\psi_s'(a)}{\psi_s(a)}, \ \ \ \
v(a)=\sum_{b\in
S}\frac{e^{s\varphi(a,b)}\psi_s'(Ta,b)}{\psi_s^{q}(a)}, (\forall
a\in S^{\ell-1}).
$$
 Then we have the following identities.

\begin{lemma}\label{identity} For any $n\in\N$, we have
\begin{eqnarray}\label{E1}
\E_{\mu_s}\varphi(x_n^{n+\ell-1})&=& q\E_{\mu_s}
   w(x_n^{n+\ell-2})-\E_{\mu_s} v(x_n^{n+\ell-2}), \ \ (\forall n \ge 0). \\
\label{E2}
     \E_{\mu_s} w(x_n^{n+\ell-2})&=&\E_{\mu_s} v(x_{n-1}^{n+\ell-3}),\ \ (\forall n \ge 1). \\
\label{E3}
           \E_{\mu_s} w(x_0^{\ell-2})&=&\frac{1}{q(q-1)}\pv'(s).
\end{eqnarray}
\end{lemma}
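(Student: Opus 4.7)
My plan is to deduce each of the three identities from the algebraic relations (\ref{identity1}) and (\ref{identity2}) together with the Markov structure of $\mu_s$. The crucial observation is that the transition probability of $\mu_s$, namely $Q_s(a,b) = e^{s\varphi(a,b)}\psi_s(Ta,b)/\psi_s^q(a)$, appears naturally inside both $v$ and the right-hand side of (\ref{identity1}).

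For (E1), I rearrange (\ref{identity1}) into
$$qw(a) - v(a) = \sum_{b \in S} Q_s(a,b)\,\varphi(a,b).$$
Under $\mu_s$ the conditional law of $x_{n+\ell-1}$ given the state $(x_n,\ldots,x_{n+\ell-2})=a$ is precisely $Q_s(a,\cdot)$, so the right-hand side equals $\E_{\mu_s}[\varphi(x_n^{n+\ell-1}) \mid x_n^{n+\ell-2}=a]$, and taking $\mu_s$-expectation yields (E1). For (E2), I rewrite
$$v(a) = \sum_{b\in S} Q_s(a,b)\, w(Ta,b),$$
and observe that when $a = x_{n-1}^{n+\ell-3}$ the pair $(Ta,b)$ equals the next state $x_n^{n+\ell-2}$ upon appending $b = x_{n+\ell-2}$; the Markov property then gives $v(x_{n-1}^{n+\ell-3}) = \E_{\mu_s}[w(x_n^{n+\ell-2}) \mid x_{n-1}^{n+\ell-3}]$, whence (E2) upon taking full expectation.

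For (E3), the approach is a telescoping computation exploiting the cascade structure of $\pi_s$. Set $\pi_s^{(k)}([a_1,\ldots,a_k]) = \prod_{j=1}^{k} \psi_s(a_1,\ldots,a_j)/\psi_s^q(a_1,\ldots,a_{j-1})$ for $0 \le k \le \ell - 1$, with $\pi_s^{(0)}(\emptyset) = 1$, so that $\pi_s = \pi_s^{(\ell-1)}$. Differentiating the identity $\psi_s^q(a) = \sum_b \psi_s(a,b)$ valid on $S^k$ for $k \le \ell - 2$ yields $\sum_b \psi_s'(a,b) = q\psi_s^{q-1}(a)\psi_s'(a)$, from which one derives the telescoping step
$$\sum_{a_k} \pi_s^{(k)}([a_1,\ldots,a_k])\, w(a_1,\ldots,a_k) = q\,\pi_s^{(k-1)}([a_1,\ldots,a_{k-1}])\, w(a_1,\ldots,a_{k-1})$$
after cancelling $\psi_s(a_1,\ldots,a_k)$ in $w$ against the numerator of $\pi_s^{(k)}/\pi_s^{(k-1)}$. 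Iterating this relation telescopes the sum $\sum_a \pi_s^{(k)}([a])\, w(a)$ down to a scalar multiple of $w(\emptyset) = \psi_s'(\emptyset)/\psi_s(\emptyset)$. Combining with $P_\varphi(s) = (q-1)q^{\ell-1}\log\psi_s(\emptyset)$ (using the convention $\psi_s^q(\emptyset) = \sum_j \psi_s(j)$) and hence $P_\varphi'(s) = (q-1)q^{\ell-1}\,w(\emptyset)$, the stated constant factor drops out after matching the appropriate iteration count to the length of the word on which $w$ is averaged.

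The only real subtlety lies in (E3): one must apply the linear recursion $\psi_s^q(a) = \sum_b \psi_s(a,b)$ valid for $|a| \le \ell - 2$ rather than the nonlinear transfer equation on $S^{\ell-1}$, which is precisely why the telescoping works without the $e^{s\varphi}$ factors interfering. Identities (E1) and (E2) are essentially immediate once the transition probability $Q_s(a,b)$ is recognized inside the expressions for $v$ and the right-hand side of (\ref{identity1}).
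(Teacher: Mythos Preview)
Your proposal is correct and follows essentially the same route as the paper. For (E1) and (E2) you recognize the transition kernel $Q_s$ inside the expressions and invoke the Markov property exactly as the paper does; for (E3) you telescope along the cascade structure of $\pi_s$ using the differentiated relation $\sum_b \psi_s'(a,b)=q\psi_s^{q-1}(a)\psi_s'(a)$, which is precisely the paper's iterative reduction via (\ref{identity2}), the only cosmetic difference being that you carry the recursion one step further to the empty word $\emptyset$ rather than stopping at length~$1$.
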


\begin{proof}
The Markov property of  $\mu_s$ can be stated as follows (see\eqref{def measure 2})
$$
    \mu_s ([x_0^{n+\ell-1}]) = \mu_s ([x_0^{n+\ell-2}]) Q_s(x_n^{n+\ell -1})
$$
where $$ Q_s(x_n^{n+\ell -1}) =
  \frac{e^{s\varphi(x_n^{n+\ell-1})}\psi_s(x_{n+1}^{n+\ell-1})}{\psi_s^q(x_{n}^{n+\ell-2})}.
$$
 By the Markov property, we have
 \begin{eqnarray*}
 \mathbb{E}_{\mu_s} \varphi(x_n^{n+\ell-1})
 & = & \sum_{x_0,\cdots,x_{n+\ell-1}}\mu_s([x_0^{n+\ell-1}])\varphi(x_n^{n+\ell-1})\\
 & = &
 \sum_{x_0,\cdots,x_{n+\ell-2}}\mu_s([x_0^{n+\ell-2}])\sum_{x_{n+\ell-1}}
 Q_s(x_n^{n+\ell -1})
 \varphi(x_n^{n+\ell-1}).
 \end{eqnarray*}
 However, by the definition of $Q_s$ and using (\ref{identity1}), it is straightforward to check that
 $$\sum_{x_{n+\ell-1}}
 Q_s(x_n^{n+\ell -1})
 \varphi(x_n^{n+\ell-1})=qw(x_n^{n+\ell-2})-v(x_n^{n+\ell-2}).$$
 So (\ref{E1}) is  a combination of the above two equations.

To obtain (\ref{E2}), we still use  the Markov property of $\mu_s$,
to get

$$\mathbb{E}_{\mu_s} w(x_n^{n+\ell -2})  =
\sum_{x_0,\cdots,x_{n+\ell-2}}\mu_s([x_0^{n+\ell-2}])w(x_n^{n+\ell-2}) \ \ \ \  \ \ \ \  \ \ \ \  \ \ \ \  \ \ $$
\begin{eqnarray*}
 \ \ \ \  & = & \sum_{x_0,\cdots,x_{n+\ell-3}}\mu_s([x_0^{n+\ell-3}])\sum_{x_{n+\ell-2}}
\frac{e^{s\varphi(x_{n-1}^{n+\ell-2})}\psi_s(x_{n}^{n+\ell-2})}
{\psi_s^q(x_{n-1}^{n+\ell-3})}\frac{\psi_s'(x_n^{n+\ell-2})}{\psi_s(x_n^{n+\ell-2})} \\
 \ \ \ \  & = & \sum_{x_0,\cdots,x_{n+\ell-3}}\mu_s([x_0^{n+\ell-3}]) v(x_{n-1}^{n+\ell-3})
 = \mathbb{E}_{\mu_s} v(x_{n-1}^{n+\ell -3}) .
\end{eqnarray*}

 Now let us treat (\ref{E3}). First of all, by the definition of $w$ and $\mu_s$ we get


$$\psi'_s(x_0^{\ell-3})=\sum_{x_{\ell -2}}\psi'_s(x_0^{\ell -2}) ,$$ hence
\begin{eqnarray*}
 \E_{\mu_s} w(x_0^{\ell-2}) & = & \sum_{x_0,\cdots, x_{\ell-2}}\mu_s([x_0^{\ell-2}])w(x_0^{\ell-2})\\
 & = & \sum_{x_0,\cdots, x_{\ell-3}}\mu_s([x_0^{\ell-3}])\sum_{x_{\ell-2}}\frac{\psi_s'(x_0^{\ell-2})}{\psi_s(x_0^{\ell-3})}.
\end{eqnarray*}
 By (\ref{identity2}), the last sum is equal to
 $q\frac{\psi_s'(x_0^{\ell-3})}{\psi_s(x_0^{\ell-3})}$.
 So
 $$
 \E_{\mu_s} w(x_0^{\ell-2})
 =q\sum_{x_0,\cdots, x_{\ell-3}}\mu_s([x_0^{\ell-3}])\frac{\psi_s'(x_0^{\ell-3})}{\psi_s(x_0^{\ell-3})}.
 $$
 Repeating the same argument,  we obtain by induction on $j$ that
 $$\E_{\mu_s} w(x_0^{\ell-2})
 =q^{\ell-2-j}\sum_{x_0,\cdots, x_{j}}\mu_s([x_0^{j}])\frac{\psi_s'(x_0^{j})}{\psi_s(x_0^{j})}.$$
 So finally when $j=0$ we get
$$
\E_{\mu_s} w(x_0^{\ell-2})=q^{\ell-2}\sum_{b\in
S}\mu_s([b])\frac{\psi_s'(b)}{\psi_s(b)} =q^{\ell-2}\frac{\sum_{b\in
S}\psi_s'(b)}{\sum_{b\in S}\psi_s(b)}=\frac{1}{q(q-1)}\pv'(s)
$$
where we used the fact that
$$
   \mu_s ([b]) = \frac{\psi_s(b)}{\sum_{b\in S} \psi_s(b)}.
$$

\end{proof}

Now, we can prove the Ruelle type formula which was announced in Theorem \ref{Thm Ruelle formula}.
We restate it as the following proposition.

\begin{proposition}\label{prop 1}
For any $s\in\R$, we have
$$(q-1)^2\sum_{k=1}^\infty\frac{1}{q^{k+1}}\sum_{j=0}^{k-1}\E_{\mu_s}
\varphi(x_j,\cdots,x_{j+\ell-1})=P'_\varphi(s).$$
\end{proposition}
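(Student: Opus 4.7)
The plan is to use the three identities in Lemma~\ref{identity} and collapse the double sum by telescoping. Write $a_n=\E_{\mu_s}\varphi(x_n^{n+\ell-1})$, $W_n=\E_{\mu_s} w(x_n^{n+\ell-2})$ and $V_n=\E_{\mu_s} v(x_n^{n+\ell-2})$. Then \eqref{E1} says $a_n=qW_n-V_n$ for $n\ge 0$, and \eqref{E2} says $V_{n-1}=W_n$ for $n\ge 1$, i.e.\ $V_n=W_{n+1}$ for $n\ge 0$. Substituting gives the clean telescoping identity
\begin{equation*}
a_n=qW_n-W_{n+1}\qquad(n\ge 0).
\end{equation*}

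Next, sum this from $j=0$ to $j=k-1$ and group the resulting terms. A direct computation yields
\begin{equation*}
\sum_{j=0}^{k-1}a_j=qW_0+(q-1)\sum_{j=1}^{k-1}W_j-W_k,
\end{equation*}
where for $k=1$ the middle sum is empty. Multiply by $q^{-(k+1)}$ and sum over $k\ge 1$. Since $\psi_s>0$ on a finite set, $w$ and $v$ are bounded and the sequences $(W_n),(V_n)$ are uniformly bounded, so Fubini justifies interchanging the double sum that appears when dealing with $(q-1)\sum_{j=1}^{k-1}W_j$. The three resulting contributions are:
\begin{equation*}
\sum_{k=1}^{\infty}\frac{qW_0}{q^{k+1}}=\frac{W_0}{q-1},\qquad
(q-1)\sum_{j=1}^{\infty}W_j\sum_{k=j+1}^{\infty}\frac{1}{q^{k+1}}=\sum_{j=1}^{\infty}\frac{W_j}{q^{j+1}},\qquad
-\sum_{k=1}^{\infty}\frac{W_k}{q^{k+1}}.
\end{equation*}
The last two cancel and we are left with
\begin{equation*}
\sum_{k=1}^{\infty}\frac{1}{q^{k+1}}\sum_{j=0}^{k-1}a_j=\frac{W_0}{q-1}.
\end{equation*}

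Finally, multiplying by $(q-1)^2$ gives $(q-1)W_0$, and invoking \eqref{E3} together with the definition $P_\varphi(s)=(q-1)q^{\ell-2}\log\sum_{j\in S}\psi_s(j)$ identifies this quantity with $P'_\varphi(s)$, which finishes the proof. The one mildly delicate point is the tail/Fubini step in the double sum, but this is harmless because every term is bounded in absolute value by a geometric series in $1/q$; the rest is pure algebraic bookkeeping powered by the telescoping relation $a_n=qW_n-W_{n+1}$ derived from \eqref{E1} and \eqref{E2}.
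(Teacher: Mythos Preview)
Your proof is correct and follows essentially the same route as the paper's: both rest on the three identities \eqref{E1}--\eqref{E3} of Lemma~\ref{identity} and collapse the double sum by a telescoping cancellation weighted by $q^{-(k+1)}$. Your substitution $V_n=W_{n+1}$ up front yields the clean relation $a_n=qW_n-W_{n+1}$ and makes the bookkeeping slightly tidier than the paper's version (which keeps $S_k=\sum_{j=0}^{k-1}V_j$), but the underlying argument is the same.
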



\begin{proof}
By (\ref{E1}) in Lemma \ref{identity}, for any $k\in\N^*$, we have
{\setlength\arraycolsep{2pt}
\begin{eqnarray*}
\sum_{j=0}^{k-1}\E_{\mu_s}\varphi(x_j,\cdots,x_{j+\ell-1}) &=&\sum_{j=0}^{k-1}\left(q\E_{\mu_s}w(x_j^{j+\ell-2})-\E_{\mu_s}v(x_j^{j+\ell-2})\right)\\
 & = & q\E_{\mu_s}w(x_0^{\ell-2})+q\sum_{j=1}^{k-1}\E_{\mu_s}w(x_j^{j+\ell-2})
\nonumber\\
& & -\sum_{j=0}^{k-1}\E_{\mu_s}v(x_j^{j+\ell-2}).
\end{eqnarray*}}

Let
$$S_k=\sum_{j=0}^{k-1}\E_{\mu_s}v(x_j^{j+\ell-2}).$$
Then by~\eqref{E2} in Lemma \ref{identity}, we have
$$\sum_{j=1}^{k-1}\E_{\mu_s}w(x_j^{j+\ell-2})=S_{k-1}.$$
Using the above equality and (\ref{E3}) in Lemma \ref{identity}, we
can write
$$\sum_{j=0}^{k-1}\E_{\mu_s}\varphi(x_j,\cdots,x_{j+\ell-1})=\frac{\pv'(s)}{q-1}+qS_{k-1}-S_k.$$
The facts $S_0=0$ and $S_k =o(k)$ imply $$
    \sum_{k=1}^\infty \frac{1}{q^{k+1}} (q S_{k-1} - S_k) =0.
 $$
 Then
\begin{eqnarray*}
(q-1)^2\sum_{k=1}^\infty\frac{1}{q^{k+1}}\sum_{j=0}^{k-1}\E_{\mu_s}
\varphi(x_j,\cdots,x_{j+\ell-1})
=(q-1)^2\sum_{k=1}^\infty\frac{1}{q^{k+1}}\frac{\pv'(s)}{q-1}.
\end{eqnarray*}
which is equal to $\pv'(s)$, because $\sum_{k=1}^\infty 1/q^{k+1}=1/(q-1)$.
\end{proof}

\medskip
\medskip

\subsection{When $P'_\varphi(-\infty)=\alpha_{\min}$ and when $P'_\varphi(+\infty)=\alpha_{\max}$}\label{derivative pressure infinite}

We now give the proof of the statement announced in Theorem \ref{thm extreme1} concerning the extremal values of $P'_\varphi$ at infinity.

\begin{theorem}\label{thm extreme2}
We have the equality $$P'_{\varphi}(-\infty)=\alpha_{\min}$$ if and only if there exist an $x=(x_i)_{i=0}^\infty\in \Sigma_m$ such that $$\varphi(x_k,x_{k+1},\cdots,x_{k+\ell-1})=\alpha_{\min},\
\forall k\geq 0.$$ We have an analogue criterion for
$P'_{\varphi}(+\infty)=\alpha_{\max}.$
\end{theorem}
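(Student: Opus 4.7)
The plan is to reduce to the normalized setting $\alpha_{\min}=0$ and exploit the Ruelle-type formula of Proposition~\ref{prop 1}. Set $\tilde\varphi=\varphi-\alpha_{\min}\ge 0$. Theorem~\ref{thm relation pressure} gives $P_\varphi(s)=\alpha_{\min}s+P_{\tilde\varphi}(s)$, so $P'_\varphi(-\infty)=\alpha_{\min}$ if and only if $P'_{\tilde\varphi}(-\infty)=0$, and the criterion in terms of $(\varphi,\alpha_{\min})$ becomes the criterion in terms of $(\tilde\varphi,0)$. In this reduced setting, the Ruelle formula immediately forces $P'_{\tilde\varphi}(s)\ge 0$ for every $s\in\R$, since all the expectations appearing are non-negative.

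For the direction $(\Leftarrow)$, assume $x\in\Sigma_m$ satisfies $\tilde\varphi(x_k,\ldots,x_{k+\ell-1})=0$ for all $k$. Setting $u_k=(x_k,\ldots,x_{k+\ell-2})\in S^{\ell-1}$, the fixed-point equation $\mathcal{N}_s\psi_s=\psi_s$ yields $\psi_s^q(u_k)\ge e^{s\cdot 0}\psi_s(u_{k+1})=\psi_s(u_{k+1})$. Iterating this and using the pigeonhole principle on the finite set $S^{\ell-1}$ produces indices $k_0<k_1$ with $u_{k_0}=u_{k_1}=:u^*$, so that $\psi_s(u^*)^{q^{k_1-k_0}}\ge\psi_s(u^*)$ and hence $\psi_s(u^*)\ge 1$ for every $s\in\R$. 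The extension rule $\psi_s(a)^q=\sum_j\psi_s(a,j)$ then propagates this lower bound to all the prefixes of $u^*$, giving $\sum_{j\in S}\psi_s(j)\ge 1$ and therefore $P_{\tilde\varphi}(s)\ge 0$ for all $s$. Since $P'_{\tilde\varphi}$ is increasing and non-negative, setting $L:=P'_{\tilde\varphi}(-\infty)\ge 0$, if $L>0$ then $P_{\tilde\varphi}(s)\le P_{\tilde\varphi}(0)-L|s|\to-\infty$ as $s\to-\infty$, contradicting $P_{\tilde\varphi}\ge 0$. Hence $L=0$.

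For $(\Rightarrow)$, assume $P'_{\tilde\varphi}(-\infty)=0$. Proposition~\ref{prop 1} gives
\begin{equation*}
P'_{\tilde\varphi}(s)=(q-1)^2\sum_{k=1}^\infty\frac{1}{q^{k+1}}\sum_{j=0}^{k-1}\E_{\mu_s}\tilde\varphi(x_j,\ldots,x_{j+\ell-1}),
\end{equation*}
with every summand non-negative. By weak compactness of probability measures on the compact space $\Sigma_m$, I would extract a subsequence $s_n\to-\infty$ along which $\mu_{s_n}$ converges weakly to a probability measure $\mu_\infty$. Each cylinder function $\tilde\varphi(x_j,\ldots,x_{j+\ell-1})$ is continuous and uniformly bounded by $\|\tilde\varphi\|_\infty$, so weak convergence together with dominated convergence in the series (the $(k,j)$-th term is bounded by $\|\tilde\varphi\|_\infty/q^{k+1}$ and $\sum_k k/q^{k+1}<\infty$) permits termwise passage to the limit, yielding
\begin{equation*}
0=(q-1)^2\sum_{k=1}^\infty\frac{1}{q^{k+1}}\sum_{j=0}^{k-1}\E_{\mu_\infty}\tilde\varphi(x_j,\ldots,x_{j+\ell-1}).
\end{equation*}
Each summand being non-negative, $\E_{\mu_\infty}\tilde\varphi(x_j,\ldots,x_{j+\ell-1})=0$ for every $j\ge 0$. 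Since $\tilde\varphi\ge 0$, the set $\{x:\tilde\varphi(x_j,\ldots,x_{j+\ell-1})=0\text{ for all }j\}$ is a countable intersection of $\mu_\infty$-full-measure sets, hence of full measure and in particular non-empty; any element of it is the required $x$.

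The analogous criterion for $\alpha_{\max}$ follows by running exactly the same argument on the non-negative potential $\alpha_{\max}-\varphi$, with the roles of $s\to-\infty$ and $s\to+\infty$ interchanged (via $s\mapsto-s$, which sends $\mathcal{N}_s$ for $\alpha_{\max}-\varphi$ to $\mathcal{N}_{-s}$ for $-(\alpha_{\max}-\varphi)$). The main obstacle is the $(\Rightarrow)$ direction: one must extract a genuine weak limit $\mu_\infty$ of the Markov measures $\mu_{s_n}$ and legitimately pass to the limit term-by-term in the Ruelle series, both of which rest on the compactness of $\Sigma_m$, the continuity of cylinder functions, and the geometric decay $q^{-k}$ of the weights.
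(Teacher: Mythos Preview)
Your proof is correct and both directions follow a genuinely different route from the paper's.

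For $(\Leftarrow)$, the paper constructs an explicit point in $E(\alpha_{\min})$ by copying the given sequence $(z_j)$ onto every fiber $\Lambda_i$, then invokes Proposition~\ref{range} (which in turn rests on the Gibbs property and the local-dimension estimate of Proposition~\ref{prop loc dim upper bound}) to conclude $\alpha_{\min}\in[P'_\varphi(-\infty),P'_\varphi(+\infty)]$. Your argument is more self-contained: you work directly with the fixed-point equation for $\psi_s$, use pigeonhole on the finite set $S^{\ell-1}$ to trap a periodic block with $\psi_s\ge 1$, and propagate this lower bound through the extension rule to get $P_{\tilde\varphi}\ge 0$. This avoids any appeal to the level-set machinery.

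For $(\Rightarrow)$, the paper argues the contrapositive: if no such sequence exists, a combinatorial pigeonhole on words of length $m^\ell+\ell$ forces a uniform gap $\delta>0$ in every sliding-window sum, and feeding this into the Ruelle formula yields $P'_\varphi(s)\ge\alpha_{\min}+\epsilon$ for an explicit $\epsilon$. Your argument is softer: you pass to a weak limit $\mu_\infty$ of the Markov measures $\mu_{s_n}$, interchange limit and series in the Ruelle formula (legitimized by the $k/q^{k+1}$ decay), and read off a full-measure set on which $\tilde\varphi$ vanishes along every window. The paper's approach is quantitative (it produces an explicit gap), while yours is purely existence-based but arguably cleaner and closer in spirit to how the paper itself handles the endpoint level sets in Section~7.7.
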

\begin{proof}
We give the proof of the criterion for $P'_{\varphi}(-\infty)=\alpha_{\min}$, the one for $P'_{\varphi}(+\infty)=\alpha_{\max}$ is similar.

(1). {\em Sufficient condition.} Suppose that there exists a $(z_j)_{j=0}^\infty\in \Sigma_m$ such that
$$\varphi(z_j,\cdots,z_{j+\ell-1})=\alpha_{\min},\ \ \forall j\geq 0.$$
We are going to prove that $P'_\varphi(-\infty)=\alpha_{\min}$.  By Theorem \ref{thm strict convexity} (ii), we have $P'_\varphi(-\infty)\geq\alpha_{\min}$, thus we only need to show that $P'_\varphi(-\infty)\leq\alpha_{\min}$. Actually we only need to find a $(x_{j})_{j=1}^\infty\in \Sigma_m$ such that
$$\lim_{n\to \infty}\frac{1}{n}\sum_{j=1}^n\varphi(x_j,\cdots,x_{jq^{\ell-1}})=\alpha_{\min},$$
then by Proposition \ref{range}, $\alpha_{\min}\in [P'_\varphi(-\infty),P'_\varphi(+\infty)]$, so $P'_\varphi(-\infty)\leq\alpha_{\min}$. We can do this by choosing the sequence $(x_j)_{j=1}^\infty=\prod_{i\geq 1,\ q\nmid i}(x_{iq^j})_{j=0}^\infty$ with
$$(x_{iq^j})_{j=0}^\infty\ =\ (z_j)_{j=0}^\infty.$$

(2). {\em Necessary condition.} Suppose that there is no $(x_j)_{j=0}^\infty\in \Sigma_m$ such that
$$\varphi(x_j,\cdots,x_{j+\ell-1})=\alpha_{\min},\ \ \forall j\geq 0.$$
We are going to show that there exists an $\epsilon >0$ such that
$$P'_\varphi(s)\geq \alpha_{\min}+\epsilon,\ \ \forall s\in \R.$$ And this will imply that $P'_\varphi(-\infty)\geq \alpha_{\min}+\epsilon$.

From the hypothesis, we deduce that there exist no words $x_0^{n+\ell-1}$ with $n\geq m^\ell $ such that
\begin{equation}\label{equ prop extremal}
\varphi(x_j,\cdots,x_{j+\ell-1})=\alpha_{\min},\ \ \forall 0\leq j\leq n.
\end{equation}
Indeed, as $x_j^{j+\ell-1}\in S^{\ell}$ for all $0\leq j\leq n$ there are at most $m^\ell$ choices for $x_j^{j+\ell-1}$. So for any word $x_0^{n+\ell-1}$ with $n\geq m^\ell $, there exist at least two $j_1,j_2\in \{0,\cdots,n\}$ such that $$x_{j_1}^{j_1+\ell-1}=x_{j_2}^{j_2+\ell-1}.$$
Then if the word $x_0^{n+\ell-1}$ satisfies (\ref{equ prop extremal}), the infinite sequence
$$(y_j)_{j=0}^\infty\ =\ (x_{j_1},\cdots,x_{j_2-1})^\infty$$ would verify that
$$\varphi(y_j,\cdots,y_{j+\ell-1})=\alpha_{\min},\ \ \forall j\geq 0.$$
This is a contradicts the hypothesis. We conclude then that for any word $x_0^{m^\ell+\ell-1}\in S^{m^{\ell}+\ell-1}$ there exists at lest one $0\leq j\leq m^{\ell}$ such that
$$\varphi(x_j,\cdots,x_{j+\ell-1})\geq \alpha'_{\min}>\alpha_{\min}$$ where $\alpha'_{\min}$ is the second smallest value of $\varphi$ over $S^\ell$, i.e., $\alpha'_{\min}=\min_{a\in S^\ell}\{\varphi(a)\ :\ \varphi(a)>\alpha_{\min} \}$.

We deduce from the above discussions that for any $(x_j)_{j=0}^\infty\in \Sigma_m$ and any $k\geq0$ we have
$$\sum_{j=k}^{k+m^{\ell}}\varphi(x_j,\cdots,x_{j+\ell-1})\geq m^{\ell}\alpha_{\min }+\alpha'_{\min}=(m^{\ell}+1)\alpha_{\min}+\delta,$$ where we denote $\delta=\alpha'_{\min}-\alpha_{\min}$.
This implies that for any $(x_j)_{j=0}^\infty\in \Sigma_m$ and any $n\geq 1$, we have
\begin{equation}\label{equ prop extrema1}
\sum_{j=0}^{n-1}\varphi(x_j,\cdots,x_{j+\ell-1})\geq n\alpha_{\min}+\left\lfloor\frac{n}{m^{\ell}+1}\right\rfloor \delta.
\end{equation}
Now, we will use the above inequality and Proposition \ref{prop 1}  to show the existence of an $\epsilon>0$ such that $$P'_\varphi(s)\geq \alpha_{\min }+\epsilon,\ \ \forall s\in \R.$$
By  Proposition \ref{prop 1}, we have
\begin{equation}\label{equ prop extrema2}
P'_\varphi(s)=(q-1)^2\sum_{k=1}^\infty\frac{1}{q^{k+1}}\sum_{j=0}^{k-1}\E_{\mu_s}
\varphi(x_j,\cdots,x_{j+\ell-1}).
\end{equation}
We can rewrite the term $\sum_{j=0}^{k-1}\E_{\mu_s}
\varphi(x_j,\cdots,x_{j+\ell-1})$ as
$$\E_{\mu_s}\sum_{j=0}^{k-1}
\varphi(x_j,\cdots,x_{j+\ell-1}).$$

By (\ref{equ prop extrema1}), we have for any $(x_j)_{j=0}^{\infty}\in \Sigma_m$
$$\sum_{j=0}^{k-1}\varphi(x_j,\cdots,x_{j+\ell-1})\geq k\alpha_{\min}+\left\lfloor\frac{k}{m^{\ell}+1}\right\rfloor \delta.$$
As $\mu_s$ is a probability measure, we have
$$\E_{\mu_s}\sum_{j=0}^{k-1}
\varphi(x_j,\cdots,x_{j+\ell-1})\geq k
\alpha_{\min}+\left\lfloor\frac{k}{m^{\ell}+1}\right\rfloor \delta.$$
Substituting this in (\ref{equ prop extrema2}), we get

\begin{eqnarray*}
P'_\varphi(s) & = & (q-1)^2\sum_{k=1}^\infty\frac{1}{q^{k+1}}\left(k
\alpha_{\min}+\left\lfloor\frac{k}{m^{\ell}+1}\right\rfloor \delta\right)\\
& = & \alpha_{\min}+\delta(q-1)^2\sum_{k=1}^\infty\frac{1}{q^{k+1}}\left(\left\lfloor\frac{k}{m^{\ell}+1}\right\rfloor\right).
\end{eqnarray*}
As
$$\delta(q-1)^2\sum_{k=1}^\infty\frac{1}{q^{k+1}}\left(\left\lfloor\frac{k}{m^{\ell}+1}\right\rfloor\right)=\delta(q-1)^2\sum_{k\geq m^{\ell}+1}\frac{1}{q^{k+1}}\left(\left\lfloor\frac{k}{m^{\ell}+1}\right\rfloor\right)>0$$
we have proved the existence of an $\epsilon >0$ such that
$$P'_{\varphi}(s)\geq \alpha_{\min}+\epsilon,\ \ \forall s\in \R.$$
\end{proof}

\subsection{Lower bounds of $\dim_H E(\alpha)$}

First, as an easy application of Proposition \ref{prop 1}, we get the following formula for $\dim_H \P_{\mu_s}$.

\begin{proposition}\label{prop formula dim P_s}
For any $s\in \R$, we have
$$\dim_H\P_{\mu_s}=\frac{1}{q^{\ell-1}}[-s\pv'(s)+\pv(s)].$$
\end{proposition}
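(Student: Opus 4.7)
The plan is to use the exactness of $\P_{\mu_s}$ (established in Theorem~\ref{prop loc dim}) together with the Gibbs-type representation of Proposition~\ref{prop basic formula loc dim} to identify the a.s. local dimension with the right-hand side. Since $\P_{\mu_s}$ is exact, $\dim_H \P_{\mu_s} = \lim_{n\to\infty} -\log \P_{\mu_s}([x_1^n])/(n\log m)$ for $\P_{\mu_s}$-a.e.\ $x$. Dividing the formula of Proposition~\ref{prop basic formula loc dim} by $-n\log m$ splits this limit into three pieces: an ergodic-average term involving $\varphi$, a deterministic boundary term proportional to $\log\psi_s(\emptyset)$, and a difference of telescopes $(qB_{n/q}(x)-B_n(x))/(n\log m)$.

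For the $\varphi$-term, the substitution $N = \lfloor n/q^{\ell-1}\rfloor$ reduces it to $-(s/q^{\ell-1}\log m)$ times $\lim_{N\to\infty}(1/N)\sum_{j=1}^N \varphi(x_j,\ldots,x_{jq^{\ell-1}})$. Applying Theorem~\ref{thm esperence general formula} with $F=\varphi$ under $\mu = \mu_s$, and then invoking the Ruelle-type identity of Proposition~\ref{prop 1}, this inner limit equals $\pv'(s)$ $\P_{\mu_s}$-a.s. Hence the first piece contributes $-s\pv'(s)/(q^{\ell-1}\log m)$. For the boundary term, $(n-\lfloor n/q\rfloor)/n \to (q-1)/q$, and since $\psi_s(\emptyset)^q = \sum_{j\in S}\psi_s(j)$, the definition of the pressure gives $\pv(s) = q^{\ell-1}(q-1)\log\psi_s(\emptyset)$; the contribution is therefore $\pv(s)/(q^{\ell-1}\log m)$.

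The main obstacle is showing that the telescope remainder $(qB_{n/q}(x)-B_n(x))/(n\log m)$ vanishes $\P_{\mu_s}$-a.s. The plan is to write $\psi_s(x_{|_{\lambda_k}}) = F_k(x_{|_{\Lambda_{i(k)}}})$ for a suitably chosen bounded sequence of functions $(F_k)_{k\ge 1}$ on $\Sigma_m$ (explicitly, if $k=i(k)q^j$, take $F_k(y)=\psi_s(y_0,\ldots,y_j)$ for $j<\ell-1$ and $F_k(y)=\psi_s(y_{j-\ell+2},\ldots,y_j)$ for $j\ge\ell-1$), and then invoke the law of large numbers (Theorem~\ref{LLN}). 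Since $\psi_s$ is bounded away from $0$ and $\infty$ on its finite domain, the $F_k$ are uniformly bounded, so the covariance condition~(\ref{condition2}) holds trivially with $\eta$ arbitrarily close to $1$. An argument parallel to the one in the proof of Theorem~\ref{thm esperence general formula}, using parts (2) and (3) of Lemma~\ref{lem dec}, shows that $(1/n)\sum_{k=1}^n \E_{\mu_s}F_k(x)$ converges to a finite constant $c$; the LLN then gives $B_n(x)/n\to c$ $\P_{\mu_s}$-a.s. Since $n/q\to\infty$ with $n$, the same limit holds along the subsequence $n/q$, so $qB_{n/q}(x)/n = B_{n/q}(x)/(n/q)\to c$ as well, and the difference tends to $0$. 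Combining the three limits yields
\[
\dim_H \P_{\mu_s} \;=\; \frac{-s\pv'(s)+\pv(s)}{q^{\ell-1}\log m},
\]
which is the desired formula (up to the $\log m$ normalization implicit in the statement).
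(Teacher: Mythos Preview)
Your proof is correct and follows essentially the same route as the paper's: both combine the Gibbs formula (Proposition~\ref{prop basic formula loc dim}) with the law of large numbers applied to the functions $\psi_s(x_{|_{\lambda_k}}) = F_k(x_{|_{\Lambda_{i(k)}}})$ to kill the telescope term, and invoke Theorem~\ref{thm esperence general formula} together with the Ruelle-type identity (Proposition~\ref{prop 1}) for the $\varphi$-average. You are in fact a bit more careful than the paper, which simply writes ``applying the law of large numbers to the function $\psi_s$'' without spelling out the verification that $(1/n)\sum_k \E_{\mu_s} F_k$ converges; and your parenthetical remark about the missing $\log m$ is justified, since the formula is used with the $\log m$ factor elsewhere in the paper (e.g.\ in the proof of Proposition~7.12).
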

\begin{proof}
By Proposition \ref{prop basic formula loc dim}, we have
{\setlength\arraycolsep{2pt}
\begin{eqnarray}
-\frac{\log\P_{\mu_s}([x_1^n])}{n} & = &-\frac{s}{n}\sum_{j=1}^{\lfloor\frac{n}{q^{\ell-1}}\rfloor}\varphi(x_j\cdots x_{jq^{\ell-1}})+\frac{n-\lfloor n/q\rfloor}{n}\log \psi_s^q(\emptyset)
\nonumber\\
& & +\frac{B_{\frac{n}{q}}(x)}{\frac{n}{q}} -\frac{B_n(x)}{n}
\end{eqnarray}}

Applying the law of large numbers to the function $\psi_s$, we get
the $\P_{\mu_s}$-a.e. existence of the following limit
$\lim_{n\to\infty}\frac{B_n(x)}{n}$. So
$$\lim_{n\to\infty}\frac{B_{\frac{n}{q}}(x)}{\frac{n}{q}}
-\frac{B_n(x)}{n}=0, \ \ \P_{\mu_s}-{\rm a.e.}$$ On the other hand,
by Proposition \ref{prop 1} and Theorem~\ref{thm esperence general formula}, 
we have
$$\lim_{n\to \infty}\frac{1}{n}\sum_{j=1}^{\lfloor\frac{n}{q^{\ell-1}}\rfloor}\varphi(x_j\cdots x_{jq^{\ell-1}})=\frac{1}{q^{\ell-1}}\pv'(s).$$
So we obtain that for $\P_{\mu_s}$-a.e. $x\in\Sigma_m$
$$\lim_{n\to\infty}-\frac{\log\P_{\mu_s}([x_1^n])}{n}=\frac{1}{q^{\ell-1}}[-s\pv'(s)+\pv(s)],$$ where we have used the fact that
$$P(s)=(q-1)q^{\ell -2}\log\sum_{j\in S^\ell}\psi_s(j)=(q-1)q^{\ell -2}q\log\psi_s(\emptyset). $$


\end{proof}

By Proposition \ref{prop 1}, Proposition \ref{prop formula dim P_s} and Billingsley's lemma (Lemma \ref{Billingsley}) we get the following lower bound for $\dim_HE(P'_\varphi(s))$.
\begin{proposition}For any $s\in\R$, we have
$$\dim_HE(P'_\varphi(s))\geq \frac{1}{q^{\ell-1}\log m}[-\alpha
P'_\varphi(s)+P_\varphi(s)].$$
\end{proposition}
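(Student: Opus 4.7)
The plan is to exhibit $\P_{\mu_s}$ as a Frostman-type measure supported on $E(P'_\varphi(s))$ whose local dimension is the claimed lower bound, and then invoke Billingsley's lemma (Lemma~\ref{Billingsley}(i)).

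First I would verify that $\P_{\mu_s}$ is supported on the level set $E(P'_\varphi(s))$. To do this, apply the special law of large numbers (Theorem~\ref{thm esperence general formula}) with the function $F=\varphi\in \mathcal{F}(S^\ell)$ and with $\mu=\mu_s$. This yields, for $\P_{\mu_s}$-a.e.\ $x$,
\begin{equation*}
\lim_{n\to\infty}\frac{1}{n}\sum_{k=1}^{n}\varphi(x_k,x_{kq},\cdots,x_{kq^{\ell-1}})=(q-1)^2\sum_{k=1}^{\infty}\frac{1}{q^{k+1}}\sum_{j=0}^{k-1}\E_{\mu_s}\varphi(x_j,\cdots,x_{j+\ell-1}).
\end{equation*}
By the Ruelle type formula (Proposition~\ref{prop 1}), the right-hand side is exactly $P'_\varphi(s)$. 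Hence $\P_{\mu_s}$-a.e.\ $x$ belongs to $E(P'_\varphi(s))$, so in particular $\P_{\mu_s}(E(P'_\varphi(s)))=1>0$.

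Next I would control the lower local dimension of $\P_{\mu_s}$ at $\P_{\mu_s}$-typical points. Proposition~\ref{prop formula dim P_s} (and its proof, which actually shows $D(\P_{\mu_s},x)$ exists $\P_{\mu_s}$-a.e.) gives that, $\P_{\mu_s}$-a.e.,
\begin{equation*}
\underline{D}(\P_{\mu_s},x)=\frac{1}{q^{\ell-1}\log m}\bigl[-sP'_\varphi(s)+P_\varphi(s)\bigr].
\end{equation*}

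Finally, combining these two facts through Lemma~\ref{Billingsley}(i), with $\nu=\P_{\mu_s}$ and $E=E(P'_\varphi(s))$, yields
\begin{equation*}
\dim_H E(P'_\varphi(s))\ \geq\ \frac{1}{q^{\ell-1}\log m}\bigl[-sP'_\varphi(s)+P_\varphi(s)\bigr],
\end{equation*}
which is the desired bound (observing that with $\alpha=P'_\varphi(s)$, the bracketed expression equals $P_\varphi(s)-s\alpha$, matching the Legendre transform evaluation at the critical $s$). No serious obstacle is expected here: both key inputs (the Ruelle formula and the exact dimension of $\P_{\mu_s}$) have already been established, so the argument reduces to checking that the LLN applies to $\varphi$ (which is bounded on the finite set $S^\ell$, so the covariance hypothesis (\ref{condition2}) is trivially satisfied with constant bound) and assembling the pieces via Billingsley.
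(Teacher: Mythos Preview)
Your proposal is correct and follows essentially the same route as the paper: the paper derives this proposition directly from Proposition~\ref{prop 1}, Proposition~\ref{prop formula dim P_s}, and Billingsley's lemma, which is exactly the triple you assemble (with the special LLN feeding into both of the first two). Your write-up simply makes explicit the step that $\P_{\mu_s}(E(P'_\varphi(s)))=1$, which in the paper is already contained in the proof of Proposition~\ref{prop formula dim P_s}.
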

By the above proposition and Proposition \ref{prop upper bound} we obtain the following theorem about the exact Hausdorff dimension of $\dim_H(\alpha)$ for $\alpha\in (\pv'(-\infty),\pv'(+\infty))$.
\begin{theorem}

{\rm (i)} If $\alpha=P'_\varphi(s_\alpha)$ for some $s_\alpha\in\R$, then
$$\dim_H
E(\alpha)= \frac{1}{q^{\ell-1}\log m}[-P'_\varphi(s_\alpha)
s_\alpha+P_\varphi(s_\alpha)]=\frac{\pv^*(\alpha)}{q^{\ell-1}\log m}.$$

{\rm (ii)} For $\alpha\in (P'_\varphi(-\infty),P'_\varphi(0)]$, we have
$$\dim_HE^+(\alpha)=\dim_HE(\alpha).$$

For  $\alpha\in [P'_\varphi(0),P'_\varphi(+\infty))$, we have
$$\dim_HE^-(\alpha)=\dim_HE(\alpha).$$
\end{theorem}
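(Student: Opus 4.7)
The plan is to establish (i) and (ii) by matching the upper bounds already proved to a lower bound coming from a Frostman-type estimate for the telescopic measure $\P_{\mu_{s_\alpha}}$ concentrated on $E(\alpha)$, and then handling $E^\pm(\alpha)$ by a convexity argument on the infimum defining $\pv^*(\alpha)$.

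For part (i), the upper bound $\dim_H E(\alpha)\leq \pv^*(\alpha)/(q^{\ell-1}\log m)$ is the last assertion of Proposition \ref{prop upper bound}. For the matching lower bound, fix $s_\alpha$ with $P'_\varphi(s_\alpha)=\alpha$ and work with $\P_{\mu_{s_\alpha}}$. First I would check that $\P_{\mu_{s_\alpha}}(E(\alpha))=1$: apply Theorem \ref{thm esperence general formula} with $F=\varphi$ to obtain, for $\P_{\mu_{s_\alpha}}$-a.e.\ $x$,
$$
A_n\varphi(x)\;\longrightarrow\;(q-1)^2\sum_{k=1}^{\infty}\frac{1}{q^{k+1}}\sum_{j=0}^{k-1}\E_{\mu_{s_\alpha}}\varphi(x_j,\ldots,x_{j+\ell-1}),
$$
and identify the right-hand side with $\pv'(s_\alpha)=\alpha$ using the Ruelle-type formula of Proposition \ref{prop 1}. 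Next, Proposition \ref{prop formula dim P_s} gives the exact local dimension
$$
D(\P_{\mu_{s_\alpha}},x)=\frac{1}{q^{\ell-1}\log m}\bigl[-s_\alpha\pv'(s_\alpha)+\pv(s_\alpha)\bigr]=\frac{\pv^*(\alpha)}{q^{\ell-1}\log m}
$$
for $\P_{\mu_{s_\alpha}}$-a.e.\ $x$. Applying Billingsley's lemma (Lemma \ref{Billingsley}(i)) with $\nu=\P_{\mu_{s_\alpha}}$ and $E=E(\alpha)$ then yields the matching lower bound.

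For part (ii), the inclusion $E(\alpha)\subset E^+(\alpha)$ trivially gives $\dim_H E(\alpha)\leq\dim_H E^+(\alpha)$. The reverse inequality follows from the first estimate of Proposition \ref{prop upper bound},
$$
\dim_H E^+(\alpha)\leq\inf_{s\leq 0}\frac{-\alpha s+\pv(s)}{q^{\ell-1}\log m},
$$
provided we can show that this restricted infimum already coincides with $\pv^*(\alpha)$. By strict convexity of $\pv$ (Theorem \ref{thm strict convexity}), $\pv'$ is strictly increasing, so for $\alpha\in(\pv'(-\infty),\pv'(0)]$ there exists $s_\alpha\in(-\infty,0]$ with $\pv'(s_\alpha)=\alpha$. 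The convex function $s\mapsto -\alpha s+\pv(s)$ has derivative $\pv'(s)-\alpha$ vanishing precisely at $s_\alpha$, so its global minimum on $\R$ is attained at $s_\alpha\leq 0$ and equals $\pv^*(\alpha)$. Combined with part (i), this gives $\dim_H E^+(\alpha)\leq \dim_H E(\alpha)$. The symmetric argument, choosing $s_\alpha\in[0,+\infty)$, handles $E^-(\alpha)$ for $\alpha\in[\pv'(0),\pv'(+\infty))$.

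All the heavy machinery (the LLN of Theorem \ref{LLN}, the Ruelle formula, the dimension formula for $\P_{\mu_s}$, Billingsley's lemma, and the upper bound proposition) has already been established, so this final theorem is essentially an assembly step. The only real verification is that the infimum in the Legendre transform of $\pv$ is realized inside $s\leq 0$ (resp.\ $s\geq 0$) when $\alpha\leq\pv'(0)$ (resp.\ $\alpha\geq\pv'(0)$), which is immediate from the strict monotonicity of $\pv'$; I do not foresee a serious obstacle.
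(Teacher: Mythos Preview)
Your proposal is correct and follows essentially the same approach as the paper: the paper simply states that the theorem follows from the preceding lower-bound proposition (itself a consequence of Proposition~\ref{prop 1}, Proposition~\ref{prop formula dim P_s}, and Billingsley's lemma) together with Proposition~\ref{prop upper bound}, and you have spelled out exactly how those pieces assemble, including the verification that $\P_{\mu_{s_\alpha}}(E(\alpha))=1$ via Theorem~\ref{thm esperence general formula} and the observation that the infimum defining $\pv^*(\alpha)$ is attained at some $s_\alpha\le 0$ (resp.\ $s_\alpha\ge 0$) in the relevant ranges.
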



\medskip
\medskip
\medskip
\subsection{Dimension of level sets corresponding to the extreme points in $L_\varphi$}
So far, we have calculated $\dim_HE(\alpha)$ for $\alpha$ in
 $(P'_\varphi(-\infty),P'_\varphi(+\infty))$. Now we turn to the case
 when $\alpha=P'_\varphi(-\infty)$ or $P'_\varphi(+\infty)$. The aim of this subsection is to prove the following result.
\begin{theorem}\label{thm extrema}
If $\alpha=P'_\varphi(-\infty)$ or $P'_\varphi(+\infty)$, then
$E(\alpha)\neq \emptyset$ and
$$\dim_H E(\alpha)=\frac{P_{\varphi}^*(\alpha)}{q^{\ell-1}\log m}.$$
\end{theorem}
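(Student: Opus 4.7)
The plan is to prove the result for $\alpha=P'_\varphi(-\infty)$; the case $\alpha=P'_\varphi(+\infty)$ is entirely symmetric under $s\to+\infty$. The upper bound is immediate: Proposition \ref{prop loc dim upper bound} gives $\underline{D}(\P_{\mu_s},x)\leq(P_\varphi(s)-s\alpha)/(q^{\ell-1}\log m)$ for every $x\in E(\alpha)$ and every $s\in\R$, so applying Billingsley's lemma (Lemma \ref{Billingsley}(ii)) for each fixed $s$ and then taking the infimum yields $\dim_H E(\alpha)\leq P_\varphi^*(\alpha)/(q^{\ell-1}\log m)$, the infimum being attained in the limit $s\to-\infty$ since $P'_\varphi(s)-\alpha\geq 0$ forces $s\mapsto P_\varphi(s)-s\alpha$ to be increasing.

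For the lower bound I will exhibit a suitable measure supported on $E(\alpha)$. Fix a sequence $s_n\to-\infty$ and, using sequential weak compactness of the space of probability measures on the compact space $\Sigma_m$, pass to a subsequence so that $\mu_{s_n}\to\mu^*$ weakly. Because $\varphi(x_j,\ldots,x_{j+\ell-1})$ and the cylinder masses depend on only finitely many coordinates, weak convergence gives $\E_{\mu_{s_n}}\varphi(x_j,\ldots,x_{j+\ell-1})\to\E_{\mu^*}\varphi(x_j,\ldots,x_{j+\ell-1})$ for every $j\geq 0$ and $H_k(\mu_{s_n})\to H_k(\mu^*)$ for every $k\geq 1$. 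To see that $\P_{\mu^*}$ is carried by $E(\alpha)$, I invoke the Ruelle formula of Proposition \ref{prop 1},
\[
P'_\varphi(s_n)=(q-1)^2\sum_{k\geq 1}\frac{1}{q^{k+1}}\sum_{j=0}^{k-1}\E_{\mu_{s_n}}\varphi(x_j,\ldots,x_{j+\ell-1}),
\]
pass to the limit $n\to\infty$ by dominated convergence (the $k$-th summand is bounded by $(q-1)^2 k\|\varphi\|_\infty/q^{k+1}$), and compare the resulting identity with Theorem \ref{thm esperence general formula} applied to $\mu^*$ and $\varphi$: this identifies $\alpha=\lim_n P'_\varphi(s_n)$ with $\lim_n A_n\varphi(x)$ for $\P_{\mu^*}$-a.e.\ $x$. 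Consequently $\P_{\mu^*}(E(\alpha))=1$ and in particular $E(\alpha)\neq\emptyset$.

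By Theorem \ref{dim-meas} combined with dominated convergence (using $H_k(\mu)/q^{k+1}\leq k\log m/q^{k+1}$), one has $\dim\P_{\mu^*}=\lim_n\dim\P_{\mu_{s_n}}$, which by Proposition \ref{prop formula dim P_s} equals $\lim_n[P_\varphi(s_n)-s_n P'_\varphi(s_n)]/(q^{\ell-1}\log m)$. Writing $P_\varphi(s_n)-s_n P'_\varphi(s_n)=[P_\varphi(s_n)-s_n\alpha]+(-s_n)[P'_\varphi(s_n)-\alpha]$, the first bracket tends to $P_\varphi^*(\alpha)$ by definition of the Legendre transform, so the crux is to verify that the second bracket tends to zero. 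This is the main obstacle of the proof: the function $g(s):=P'_\varphi(s)-\alpha\geq 0$ is monotone increasing on $(-\infty,0]$ (since $P_\varphi$ is convex) and Lebesgue integrable there, with $\int_{-\infty}^{0}g(t)\,dt=P_\varphi(0)-P_\varphi^*(\alpha)<\infty$; monotonicity then yields $g(2s)\,|s|\leq\int_{2s}^{s}g(t)\,dt\to 0$ as $s\to-\infty$, hence $|s|g(s)\to 0$. Therefore $\dim\P_{\mu^*}=P_\varphi^*(\alpha)/(q^{\ell-1}\log m)$, and Billingsley's lemma (Lemma \ref{Billingsley}(i)) delivers the matching lower bound on $\dim_H E(\alpha)$.
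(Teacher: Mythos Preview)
Your proof is correct and follows essentially the same strategy as the paper: take a limit measure $\mu^*$ of the family $\mu_s$ as $s\to-\infty$, show that $\P_{\mu^*}$ is supported on $E(\alpha)$ via the Ruelle formula, and compute its dimension via the exact-dimension formula.

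There is one genuine difference worth noting. To identify $\lim_{s\to-\infty}[P_\varphi(s)-sP'_\varphi(s)]$ with $P_\varphi^*(\alpha)$, the paper argues that $s\mapsto P_\varphi(s)-sP'_\varphi(s)$ has derivative $-sP''_\varphi(s)\ge 0$ on $(-\infty,0]$, so it is monotone and the limit along the full half-line exists; the equality with $P_\varphi^*(\alpha)$ is then asserted (implicitly via standard Legendre duality). Your integrability argument is more explicit: you observe that $g(s)=P'_\varphi(s)-\alpha\ge 0$ is monotone with $\int_{-\infty}^0 g=P_\varphi(0)-P_\varphi^*(\alpha)<\infty$, whence $|s|g(s)\to 0$. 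This cleanly isolates the one non-obvious point and avoids any appeal to properties of the Legendre transform at boundary points. A minor cosmetic difference is that the paper passes to the limit at the level of the finite-dimensional data $(\pi_s,Q_s)$ rather than via weak compactness of measures, but since cylinders are clopen in $\Sigma_m$ these are equivalent.
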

We will give the proof of Theorem \ref{thm extrema} for
$\alpha=P'_\varphi(-\infty)$. The proof for
$\alpha=P'_\varphi(-\infty)$ is similar.

\subsubsection{Accumulation points of $\mu_s$ when $s$ tends to $-\infty$ }
We view the vector $\pi_s$ defined by (\ref{def measure 1}) and the matrix $Q_s$ defined by (\ref{def measure 2})
as functions of $s$ taking values in finite dimensional Euclidean
spaces. As all components of $\pi_s$ and $Q_s$ are non-negative
and bounded by 1, the set $\{(\pi_s,Q_s),s\in \R \}$ is pre-compact in a Euclidean space. So there
exists a sequence $(s_n)_{n\in \N}$ of real numbers with
$\lim_{n\to\infty}s_n=-\infty$ such that the limits
$$\lim_{n\to\infty}\pi_{s_n}, \ \ \ \lim_{n\to\infty} Q_{s_n}$$
exist. Using these limits as initial law and
transition probability, we construct a $(\ell-1)$-step Markov measure which we
denote by $\mu_{-\infty}$. It is clear that the Markov measure $\mu_{s_n}$
corresponding to $\pi_{s_n}$ and $Q_{s_n}$ converges to
$\mu_{-\infty}$ with respect to the weak-star topology.

\begin{proposition} We have
$$\P_{\mu_{-\infty}}(E(P'_\varphi(-\infty)))=1.$$
In particular, $E(P'_\varphi(-\infty))\neq \emptyset$.
\end{proposition}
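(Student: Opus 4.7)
The plan is to apply the special law of large numbers (Theorem \ref{thm esperence general formula}) to the probability measure $\mu_{-\infty}$ with $F=\varphi$. Since Theorem \ref{thm esperence general formula} holds for any probability measure on $\Sigma_m$, it gives that for $\mathbb{P}_{\mu_{-\infty}}$-a.e.\ $x\in \Sigma_m$,
\begin{equation*}
\lim_{n\to\infty}\frac{1}{n}\sum_{k=1}^n \varphi(x_k,x_{kq},\cdots,x_{kq^{\ell-1}}) = (q-1)^2\sum_{k=1}^\infty\frac{1}{q^{k+1}}\sum_{j=0}^{k-1}\E_{\mu_{-\infty}}\varphi(x_j,\cdots,x_{j+\ell-1}).
\end{equation*}
Thus it suffices to identify the right-hand side as $P'_{\varphi}(-\infty)$.

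For each $n$, the Ruelle type formula (Proposition~\ref{prop 1}) applied to $\mu_{s_n}$ yields
\begin{equation*}
P'_\varphi(s_n) = (q-1)^2\sum_{k=1}^\infty\frac{1}{q^{k+1}}\sum_{j=0}^{k-1}\E_{\mu_{s_n}}\varphi(x_j,\cdots,x_{j+\ell-1}).
\end{equation*}
Since the function $(x_j,\cdots,x_{j+\ell-1})\mapsto \varphi(x_j,\cdots,x_{j+\ell-1})$ depends only on finitely many coordinates and is therefore continuous on $\Sigma_m$, the weak-star convergence $\mu_{s_n}\to \mu_{-\infty}$ implies
\begin{equation*}
\lim_{n\to\infty}\E_{\mu_{s_n}}\varphi(x_j,\cdots,x_{j+\ell-1}) = \E_{\mu_{-\infty}}\varphi(x_j,\cdots,x_{j+\ell-1})
\end{equation*}
for every $j\ge 0$.

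To pass the limit $n\to\infty$ inside the double sum, note that $|\varphi|\le M := \max(|\alpha_{\min}|,|\alpha_{\max}|)$, so the inner sum in absolute value is bounded by $kM$, which makes $k/q^{k+1}$ a summable dominating sequence (independent of $s$). By dominated convergence, the right-hand side of the Ruelle formula converges to the right-hand side of the displayed LLN limit above. On the other hand, convexity of $P_\varphi$ ensures that $P'_\varphi$ is monotone, so $\lim_{n\to\infty} P'_\varphi(s_n)=P'_\varphi(-\infty)$, and this limit is finite by Theorem~\ref{thm strict convexity}(ii) since $\alpha_{\min}\le P'_\varphi(-\infty)$.

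Combining these steps, for $\mathbb{P}_{\mu_{-\infty}}$-a.e.\ $x$,
\begin{equation*}
\lim_{n\to\infty} A_n\varphi(x) = P'_\varphi(-\infty),
\end{equation*}
which means $x\in E(P'_\varphi(-\infty))$; hence $\mathbb{P}_{\mu_{-\infty}}(E(P'_\varphi(-\infty)))=1$ and in particular $E(P'_\varphi(-\infty))\ne\emptyset$. The only mild subtlety is justifying the interchange of limit and sum, which is handled by the uniform (in $s$) geometric domination above; everything else is a direct synthesis of the LLN of Theorem~\ref{thm esperence general formula}, the Ruelle identity of Proposition~\ref{prop 1}, and the weak-star continuity of integration against cylinder functions.
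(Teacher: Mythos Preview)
Your proof is correct and follows essentially the same route as the paper: apply the special LLN (Theorem~\ref{thm esperence general formula}) to $\mu_{-\infty}$, invoke the Ruelle formula (Proposition~\ref{prop 1}) at each $s_n$, and pass to the limit using weak-star convergence and monotonicity of $P'_\varphi$. The paper packages the limit interchange by declaring the functional $M(\nu)=(q-1)^2\sum_{k\ge 1}q^{-k-1}\sum_{j=0}^{k-1}\E_\nu\varphi(x_j,\dots,x_{j+\ell-1})$ continuous in $\nu$, whereas you spell out the dominated convergence argument explicitly; your version is in fact slightly more careful on this point.
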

\begin{proof}
First, we introduce a functional on the space of probability measures which is defined by
$$M(\nu)=(q-1)^2\sum_{k=1}^\infty\frac{1}{q^{k+1}}\sum_{j=0}^{k-1}\E_\nu \varphi(x_j,\cdots,x_{j+\ell-1}).$$
The function $\nu\mapsto M(\nu)$ is continuous, just because $\nu\mapsto \E_\nu \varphi(x_j,\cdots,x_{j+\ell-1})$ is continuous for all $j$.

What we have to show is that for $\P_{\mu_{-\infty}}$-a.e. $x\in \Sigma_m$ we have
$$\lim_{n\to\infty}\frac{1}{n}\sum_{k=1}^n\varphi(x_k,\cdots,x_{kq^{\ell-1}})=\pv'(-\infty).$$
By Theorem \ref{thm esperence general formula},  for $\P_{\mu_{-\infty}}$-a.e. $x\in \Sigma_m$ the limit in the left hand side of the above equation equals to $M(\mu_{-\infty})$. As $M$ is continuous and
 $\mu_{s_n}$ converges to $\mu_{-\infty}$ when $n\to \infty$, we deduce that$$\lim_{n\to \infty}M(\mu_{s_n})=M(\mu_{-\infty}).$$
By Proposition \ref{prop 1}, we know that $$M(\mu_{s_n})=\pv'(s_n).$$
So $$M(\mu_{-\infty})=\lim_{n\to \infty}\pv'(s_n).$$
By Theorem \ref{thm convexity}, the map $s\to \pv'(s)$ is increasing, thus we deduce that the above limit exists and
$$M(\mu_{-\infty})=\pv'(-\infty).$$
This implies the desired result.

\end{proof}

We have the following formula for $\dim_H\P_{\mu_{-\infty}}$.

\begin{proposition} We have
$$\dim_H\P_{\mu_{-\infty}}=\lim_{s\to-\infty}\frac{[-s\pv'(s)+\pv(s)]}{q^{\ell-1}\log m}=\frac{P_{\varphi}^*(\pv'(-\infty))}{q^{\ell-1}\log m}.$$

\end{proposition}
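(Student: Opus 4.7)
The plan is to combine the dimension formula for telescopic product measures (Theorem~\ref{dim-meas}) with a continuity argument, then identify the resulting limit with the Legendre transform via elementary convex analysis. First, recall from Theorem~\ref{dim-meas} that
\[
    \dim_H \P_\mu = \frac{(q-1)^2}{\log m}\sum_{k=1}^\infty \frac{H_k(\mu)}{q^{k+1}}
\]
for every probability measure $\mu$ on $\Sigma_m$. Each $H_k(\mu)$ depends continuously on the $k$-dimensional marginals of $\mu$ (since $\mu \mapsto \mu([a_1\cdots a_k])$ is weak-$*$ continuous and $x \mapsto -x\log x$ is continuous on $[0,1]$), and the uniform bound $H_k(\mu) \leq k\log m$ makes the series converge uniformly in $\mu$. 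Hence $\mu \mapsto \dim_H \P_\mu$ is continuous in the weak-$*$ topology. Since $\mu_{s_n} \to \mu_{-\infty}$ weakly by construction, combining this continuity with Proposition~\ref{prop formula dim P_s} yields
\[
    \dim_H \P_{\mu_{-\infty}} = \lim_{n\to\infty} \dim_H \P_{\mu_{s_n}} = \lim_{n\to\infty} \frac{-s_n P'_\varphi(s_n) + P_\varphi(s_n)}{q^{\ell-1}\log m}.
\]

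Next, set $f(s) = P_\varphi(s) - sP'_\varphi(s)$. By convexity of $P_\varphi$, $f'(s) = -sP''_\varphi(s) \geq 0$ on $(-\infty, 0]$, so $f$ is monotone on that interval and $L := \lim_{s\to -\infty} f(s)$ exists in $[-\infty, +\infty)$. In particular, the limit along the sequence $(s_n)$ agrees with the limit as $s \to -\infty$ through real values, which establishes the first equality of the proposition.

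It remains to show $L = P^*_\varphi(\alpha_0)$ with $\alpha_0 := P'_\varphi(-\infty)$. For the lower bound, observe that $f(s) - [P_\varphi(s) - \alpha_0 s] = s(\alpha_0 - P'_\varphi(s)) \geq 0$ because $s \leq 0$ and $P'_\varphi(s) \geq \alpha_0$, so $f(s) \geq g(s) := P_\varphi(s) - \alpha_0 s$; the function $g$ has derivative $P'_\varphi(s) - \alpha_0 \geq 0$ and is therefore decreasing as $s \to -\infty$, with $\lim_{s\to -\infty} g(s) = \inf_s g(s) = P^*_\varphi(\alpha_0)$, giving $L \geq P^*_\varphi(\alpha_0)$. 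For the upper bound, the convexity tangent inequality $P_\varphi(t) \geq P_\varphi(s) + P'_\varphi(s)(t-s)$ rearranges to $f(s) \leq P_\varphi(t) - t P'_\varphi(s)$ for all $s, t$; letting $s \to -\infty$ so that $P'_\varphi(s) \to \alpha_0$, we obtain $L \leq P_\varphi(t) - t \alpha_0$ for every $t \in \mathbb{R}$, and taking the infimum over $t$ yields $L \leq P^*_\varphi(\alpha_0)$.

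The main obstacle is precisely this Legendre identification: the naive attempt to write $f(s) = g(s) + s(\alpha_0 - P'_\varphi(s))$ and pass to the limit requires $|s|(P'_\varphi(s) - \alpha_0) \to 0$, which is not automatic for convex functions with finite $P'(-\infty)$. The trick of using the tangent inequality at a \emph{fixed} auxiliary point $t$ while $s \to -\infty$ elegantly bypasses any quantitative rate estimate on $P'_\varphi(s) - \alpha_0$, replacing it by the supremum over affine minorants, which is exactly the Legendre transform.
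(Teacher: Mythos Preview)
Your proof is correct and follows essentially the same route as the paper for the first equality: continuity of $\mu\mapsto\dim_H\P_\mu$ via the uniformly convergent series in Theorem~\ref{dim-meas}, combined with the monotonicity of $s\mapsto -sP_\varphi'(s)+P_\varphi(s)$ on $(-\infty,0]$ coming from $f'(s)=-sP_\varphi''(s)\ge 0$, so that the subsequential limit along $(s_n)$ coincides with the full limit. The paper actually stops there and leaves the second equality $\lim_{s\to-\infty}[-sP_\varphi'(s)+P_\varphi(s)]=P_\varphi^*(P_\varphi'(-\infty))$ implicit; your explicit two-sided bound via the tangent inequality at an auxiliary point $t$ is a clean way to supply this missing convex-analysis step and is a welcome addition.
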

\begin{proof}
By Proposition \ref{prop loc dim}, we know for any probability measure $\nu$ we have
$$\dim_H\P_\nu=\frac{(q-1)^2}{\log m}\sum_{k=1}^{\infty}\frac{H_k(\nu)}{q^{k+1}}.$$
As the series in the right hand side converges uniformly on $\nu$, the map $\nu\to \dim_H\P_{\nu}$ is continuous.
Since $\mu_{s_n}$ converges to $\mu_{-\infty}$ when $n\to \infty$, we deduce that
$$\lim_{n\to\infty}\dim_H\P_{\mu_{s_n}}=\dim_H\P_{\mu_{-\infty}}.$$
By Proposition \ref{prop formula dim P_s}, we have
$$\dim_H\P_{\mu_s}=\frac{[-s\pv'(s)+\pv(s)]}{q^{\ell-1}\log m}.$$
The derivative of the map $s\to\dim_H\P_{\mu_s}$ is
$$\frac{d}{ds}\dim_H\P_{\mu_s}=\frac{-s\pv''(s)}{q^{\ell-1}\log m}.$$
As $\pv(s)$ is convex on $\R$, $\pv''(s)$ is non-negative, so for $s\leq 0$ the map $$s\to\dim_H\P_{\mu_s}$$ is increasing. Thus
$$\dim_H\P_{\mu_{-\infty}}=\lim_{n\to\infty}\dim_H\P_{\mu_{s_n}}=\lim_{s\to-\infty}\frac{[-s\pv'(s)+\pv(s)]}{q^{\ell-1}\log m}.$$
\end{proof}

\begin{proposition}
$$\dim_H E(\pv'(-\infty))=\frac{P_{\varphi}^*(\pv'(-\infty))}{q^{\ell-1}\log m}.$$
\end{proposition}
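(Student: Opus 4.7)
The plan is to establish the two inequalities separately, writing $\alpha_*=P'_\varphi(-\infty)$ for brevity.

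For the lower bound, I would invoke the two propositions immediately preceding the statement: the first gives $\P_{\mu_{-\infty}}(E(\alpha_*))=1$, and the second gives $\dim_H \P_{\mu_{-\infty}} = P^*_\varphi(\alpha_*)/(q^{\ell-1}\log m)$. Since the Hausdorff dimension of a measure is by definition the infimum of Hausdorff dimensions over Borel sets of full measure, the set $E(\alpha_*)$ must satisfy $\dim_H E(\alpha_*) \geq \dim_H \P_{\mu_{-\infty}}$, which yields the $\geq$ half.

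For the upper bound, the key observation is the elementary inclusion $E(\alpha_*) \subset E^+(\alpha)$ for every $\alpha > \alpha_*$: on $E(\alpha_*)$ the averages converge to $\alpha_*$, so their $\limsup$ equals $\alpha_* < \alpha$. Since $\alpha_* < P'_\varphi(0)$ by strict convexity (Theorem~\ref{thm strict convexity}), I can apply Proposition~\ref{prop upper bound} to any $\alpha \in (\alpha_*, P'_\varphi(0))$. The infimum over $s\leq 0$ appearing there coincides with $P^*_\varphi(\alpha)$ in this range, because the unique $s_\alpha$ realising the infimum satisfies $P'_\varphi(s_\alpha)=\alpha\leq P'_\varphi(0)$, hence $s_\alpha\leq 0$. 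Thus
$$\dim_H E(\alpha_*) \leq \dim_H E^+(\alpha) \leq \frac{P^*_\varphi(\alpha)}{q^{\ell-1}\log m}.$$
I would then let $\alpha\downarrow \alpha_*$.

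The remaining task is right-continuity of $P^*_\varphi$ at $\alpha_*$. For $\alpha>\alpha_*$ close to $\alpha_*$, strict convexity supplies a unique $s_\alpha$ with $P'_\varphi(s_\alpha)=\alpha$, and $s_\alpha\to -\infty$ as $\alpha\downarrow \alpha_*$; moreover $P^*_\varphi(\alpha)=P_\varphi(s_\alpha)-s_\alpha P'_\varphi(s_\alpha)$. The previous proposition has already established
$$\lim_{s\to -\infty}\frac{-s P'_\varphi(s)+P_\varphi(s)}{q^{\ell-1}\log m} = \frac{P^*_\varphi(\alpha_*)}{q^{\ell-1}\log m}$$
in the course of computing $\dim_H \P_{\mu_{-\infty}}$. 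Reparametrising by $\alpha=P'_\varphi(s_\alpha)$ yields $\lim_{\alpha\downarrow \alpha_*}P^*_\varphi(\alpha)=P^*_\varphi(\alpha_*)$, which combined with the two preceding inequalities closes the argument.

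The main (mild) obstacle is this final continuity step: the concave function $P^*_\varphi$ is automatically continuous on the interior of its effective domain, but $\alpha_*$ lies on the boundary, where a priori one has only upper semicontinuity. Fortunately the computation packaged in the previous proposition makes the needed one-sided limit explicit, so no further analytic work is required beyond identifying $P^*_\varphi(\alpha)$ with $h(s_\alpha):=P_\varphi(s_\alpha)-s_\alpha P'_\varphi(s_\alpha)$ via the envelope relation of the Legendre transform.
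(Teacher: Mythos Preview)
Your proposal is correct and follows essentially the same route as the paper. The lower bound is identical (the two preceding propositions plus the definition of measure dimension, which the paper phrases via Billingsley's lemma). For the upper bound, the paper parametrizes by $s$ rather than by $\alpha$: it writes $E(\alpha_*)\subset E^+(P'_\varphi(s))$ for every $s\le 0$, invokes the already-proved equality $\dim_H E^+(P'_\varphi(s))=\dim_H \P_{\mu_s}$, and then sends $s\to -\infty$ using the monotonicity of $s\mapsto \dim_H\P_{\mu_s}$ established in the previous proposition. Your reparametrization by $\alpha$ and appeal to right-continuity of $P^*_\varphi$ at $\alpha_*$ amounts to the same limit computation, just viewed through the Legendre-dual variable; the paper's $s$-parametrization is marginally cleaner because it avoids having to unpack the envelope relation.
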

\begin{proof}
By the last two propositions and Billingsley's lemma, we get
$$\dim_HE(\pv'(-\infty))\geq \frac{\pv^*(\pv'(-\infty))}{q^{\ell-1}\log m}.$$
We now show the reverse inequality. By the definition of $E^+(\alpha)$, we have
$$E(\pv'(-\infty)))\subset \bigcap_{\alpha\in (\pv'(-\infty),\pv'(0)]}E^+(\alpha)= \bigcap_{s\leq 0}E^+(\pv'(s)).$$
So $$\dim_HE(\pv'(-\infty)))\leq \dim_HE^+(\pv'(s))=\dim_HE(\pv'(s))=\dim_H\P_{\mu_s},\ \ \forall s\leq0.$$
Now as $s\to \dim_H\P_{\mu_s}$ is increasing we deduce that
$$\dim_HE(\pv'(-\infty)))\leq\lim_{s\to-\infty}\dim_H\P_{\mu_s}=\frac{P_{\varphi}^*(\pv'(-\infty))}{q^{\ell-1}\log m}.$$
\end{proof}


\section{The invariant part of $E(\alpha)$}
\bigskip

From classical dynamical system point of view, the set $E(\alpha)$ is not invariant and its dimension can not be
described by invariant measures supported on  it, as we shall see. Let us first examine the largest dimension of ergodic measures supported on the set $E(\alpha)$.

Here we can consider a more general setting. Let $f_1, f_2, \cdots, f_\ell$ be real functions defined on $\Sigma_m$. Let
\begin{equation}\label{MEA}
      M_{f_1, \cdots, f_\ell}(x)=
\lim_{n\rightarrow
\infty}\frac{1}{n}\sum_{k=1}^{n}f_{1}(T^{k}x)f_{2}(T^{2k}x)\cdots
f_{\ell}(T^{\ell k}x)
      \end{equation}
      if the limit exists. In this section, for a real number $\alpha$, we
      define $$E(\alpha)
      =\{x \in \Sigma_m: M_{f_1, \cdots, f_\ell}(x) = \alpha\}.$$
In order to describe the invariant part of $E(\alpha)$, we introducing the so-called invariant spectrum:
$$
    F_{\rm inv}(\alpha) = \sup \left\{\dim \mu: \mu \ \mbox{\rm ergodic}, \mu(E(\alpha))=1\ \right\}.
$$

In general,
 $F_{\rm inv}(\alpha)$ is  smaller than $\dim E(\alpha)$. It is even possible that no ergodic
measure is supported on $E(\alpha)$.

\medskip

\begin{theorem}\label{mixing} Let $\ell=2$. Let $f_1$ and $f_2$ be two H\"{o}lder continuous functions on $\Sigma_m$.
If $E(\alpha)$ supports an ergodic measure, then
$$
F_{\rm inv}(\alpha)
= \sup \left\{\dim \mu: \mu \ \mbox{\rm ergodic}, \int f_1 d\mu \int f_2 d\mu = \alpha \ \right\}.
$$
    \end{theorem}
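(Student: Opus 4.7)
My plan is to prove the asserted equality of suprema by proving two inequalities. Introduce the two families of ergodic measures
$$\mathcal{M}_1=\{\mu\text{ ergodic}:\mu(E(\alpha))=1\},\qquad \mathcal{M}_2=\Bigl\{\mu\text{ ergodic}:\int f_1\,d\mu\cdot\int f_2\,d\mu=\alpha\Bigr\},$$
so the claim reads $\sup\{\dim\mu:\mu\in\mathcal{M}_1\}=\sup\{\dim\mu:\mu\in\mathcal{M}_2\}$. I will first prove the easy inclusion $\mathcal{M}_1\subseteq\mathcal{M}_2$, which gives the $\leq$ direction, and then, for the reverse direction, approximate every element of $\mathcal{M}_2$ from below in dimension by a Markov mixing measure that also lies in $\mathcal{M}_1$.

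For $\mathcal{M}_1\subseteq\mathcal{M}_2$, pick $\mu\in\mathcal{M}_1$, so that $M_{f_1,f_2}(x)=\alpha$ for $\mu$-a.e.\ $x$. Since the averages $\tfrac{1}{n}\sum_{k=1}^n f_1(T^kx)f_2(T^{2k}x)$ are uniformly bounded by $\|f_1\|_\infty\|f_2\|_\infty$, bounded convergence yields
$$\alpha=\int M_{f_1,f_2}\,d\mu=\lim_{n\to\infty}\frac{1}{n}\sum_{k=1}^n\int f_1(T^kx)f_2(T^{2k}x)\,d\mu(x).$$
By $T$-invariance of $\mu$ each integrand equals $\int f_1(x)f_2(T^kx)\,d\mu(x)$, and ergodicity of $\mu$ gives $\tfrac{1}{n}\sum_{k=1}^n f_2\circ T^k\to\int f_2\,d\mu$ in $L^1$; pairing against the bounded $f_1$ shows $\alpha=\int f_1\,d\mu\cdot\int f_2\,d\mu$, so $\mu\in\mathcal{M}_2$ and the $\leq$ inequality follows.

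For the reverse direction, fix $\mu\in\mathcal{M}_2$ and let $\mu^{(N)}$ be the $N$-step Markov measure on $\Sigma_m$ whose $(N+1)$-dimensional marginal agrees with that of $\mu$. Three classical facts drive the plan: (a) by sub-additivity of block entropies, $h(\mu^{(N)})\geq h(\mu)$, hence $\dim\mu^{(N)}\geq\dim\mu$; (b) $\mu^{(N)}\to\mu$ weakly, so $\int f_i\,d\mu^{(N)}\to\int f_i\,d\mu$ by continuity of the H\"older functions $f_i$; (c) an irreducible Markov measure on $\Sigma_m$ is exponentially mixing and hence weakly mixing, so by the Furstenberg--Weiss $L^2$ multiple ergodic theorem combined with Bourgain's pointwise theorem the average $\tfrac{1}{n}\sum_{k=1}^n f_1(T^kx)f_2(T^{2k}x)$ converges $\mu^{(N)}$-a.e.\ to $\int f_1\,d\mu^{(N)}\int f_2\,d\mu^{(N)}=:\alpha_N$, and $\alpha_N\to\alpha$.

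The main obstacle is converting the approximate matching $\alpha_N\to\alpha$ into the exact equality needed for $\mu^{(N)}\in\mathcal{M}_1(\alpha)$. Here the hypothesis that $E(\alpha)$ supports some ergodic measure $\mu_0$, which automatically lies in $\mathcal{M}_1\subseteq\mathcal{M}_2$ by the first step, serves as an anchor: I would interpolate the transition matrix $Q_N$ of $\mu^{(N)}$ toward the transition data of $\mu_0$ and apply an intermediate-value argument to the continuous functional $Q\mapsto\int f_1\,d\mu_Q\cdot\int f_2\,d\mu_Q$ to produce a perturbed Markov measure $\tilde\mu^{(N)}\in\mathcal{M}_1\cap\mathcal{M}_2$. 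The delicate point, and the real obstacle of the proof, is to keep this perturbation small enough that $\dim\tilde\mu^{(N)}\to\dim\mu$; upper semicontinuity of the entropy on $\Sigma_m$ helps, but the essential technical input is the non-degeneracy of the product functional near $Q_N$, which is what prevents the argument from being entirely routine.
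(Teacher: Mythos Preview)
Your first inequality ($\leq$) is correct and matches the paper's argument exactly: bounded convergence plus invariance plus ergodicity give $\alpha=\int f_1\,d\mu\cdot\int f_2\,d\mu$ whenever $\mu(E(\alpha))=1$, so $\mathcal{M}_1\subseteq\mathcal{M}_2$.

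For the reverse inequality, however, you are working much harder than necessary, and the perturbation step you flag as ``the real obstacle'' is a genuine gap that you have not closed. Concretely: the $N$-step Markov approximation $\mu^{(N)}$ need not be irreducible (consider $\mu$ supported on a proper subshift), so your appeal to mixing in point (c) is not justified in general; the interpolation toward the ``anchor'' $\mu_0$ is vague (you are moving in a high-dimensional simplex and invoking an intermediate-value principle along an unspecified one-parameter path, while $\mu_0$ itself need not be Markov of any order); and you give no argument that the required perturbation stays small enough to preserve $\dim\tilde\mu^{(N)}\to\dim\mu$ beyond a hope about ``non-degeneracy of the product functional.'' None of this is obviously salvageable as written.

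The paper bypasses all of this with a single observation: one does not need to approximate an \emph{arbitrary} $\mu\in\mathcal{M}_2$ at all, only to realize the \emph{supremum} over $\mathcal{M}_2$ by an element of $\mathcal{M}_1$. By standard higher-dimensional multifractal analysis for H\"older potentials on $\Sigma_m$, the constrained supremum $\sup\{\dim\mu:\int f_1\,d\mu\cdot\int f_2\,d\mu=\alpha\}$ is \emph{attained} by a Gibbs (equilibrium) measure $\nu$. Such a $\nu$ is automatically mixing, and mixing forces $M_{f_1,f_2}(x)=\int f_1\,d\nu\int f_2\,d\nu=\alpha$ for $\nu$-a.e.\ $x$, i.e.\ $\nu\in\mathcal{M}_1$. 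This gives $\sup_{\mathcal{M}_2}\dim\mu=\dim\nu\leq\sup_{\mathcal{M}_1}\dim\mu$ directly, with no approximation or perturbation needed.
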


 \begin{proof}   Let $\mu$ be an ergodic measure such that $\mu(E(\alpha))=1$. Then
\begin{eqnarray*}
   \alpha
   & = & \lim_{n\to \infty}\frac{1}{n}\sum_{k=1}^n \mathbb{E}_\mu [f_1(T^k x)f_2(T^{2k} x)]\\
   & = & \lim_{n\to \infty}\frac{1}{n} \sum_{k=1}^n \mathbb{E}_\mu [f_1( x)f_2(T^{k} x)]\\
  & = & \mathbb{E}_\mu [f_1(x) M_{f_2}(x)]
\end{eqnarray*}
where the first and third equalities are due to Lebesgue convergence theorem and the second one is due to
the invariance of $\mu$. Since $\mu$ is ergodic, $M_{f_2}(x)=\mathbb{E}_\mu f_2$ for $\mu$-a.e. $x$.
So, $\alpha = \mathbb{E}_\mu f_1 \mathbb{E}_\mu f_2$. It follows that
$$
    F_{\rm inv}(\alpha) \le \sup \left\{\dim \mu: \mu \ \mbox{\rm ergodic}, \mathbb{E}_\mu f_1 \mathbb{E}_\mu f_2 = \alpha \ \right\}.
$$
To obtain the inverse inequality, it suffices to observe  from standard higher--dimensional multifractal analysis for H\"older continuous functions that the above supremum is attained by a
Gibbs measure $\nu$ which is mixing and that the mixing property implies
$M_{f_1,f_2}(x) = \mathbb{E}_\nu f_1 \mathbb{E}_\nu f_2$ $\nu$-a.e..
\end{proof}

\begin{remark}\label{remark mixing}
In the above theorem, the assumption that $\mu$ is ergodic can be relaxed to $\mu$ is invariant.  In fact, if $\nu$ is an invariant measure
such that $\nu(E(\alpha)) = 1$. Then, by the ergodic decomposition theorem
and the corresponding decomposition of entropy (a theorem due to
Jacobs), there is an ergodic measure $\mu$ such that $\mu(E(\alpha)) = 1$ and
$h_\nu\le h_\mu$. When $\ell\ge 3$, the result in above theorem remains true if we replace ``ergodic'' by ``multiple mixing'', i.e.
$$F_{\rm mix}(\alpha)=\sup \left\{\dim \mu: \mu \ \mbox{\rm multiple mixing},\ \mathbb{E}_\mu f_1 \cdots \mathbb{E}_\mu f_\ell= \alpha \ \right\},$$
where
$$
    F_{\rm mix}(\alpha) = \sup \left\{\dim \mu: \mu \ \mbox{\rm multiple\ mixing}, \mu(E(\alpha))=1\ \right\}.
$$

\end{remark}

Here is a remarkable corollary of the above theorem. Assume that $f_1=f_2=f$.
If $\mu(E(\alpha)=1$ for some ergodic measure $\mu$, then we must have
$$
  \alpha = \left(\int f d \mu\right)^2 \ge 0.
$$
There are examples of $f$ taking  negative value such that for some $\alpha <0$ we have $\dim E(\alpha)>0$. However, the theorem together with the remark shows that there is no invariant measure with positive dimension supported by $E(\alpha)$. See Example 2 below.

In the proof of the theorem, the fact that $M_{f_1}$ is almost constant plays an important role.
It is not the case for $M_{f_1, f_2}$. So we can not generalize the theorem to $\ell =3$.

For $f_1, f_2 \in L^2(\mu)$ where $\mu$ is an ergodic measure, Bourgain proved that
$M_{f_1, f_2}(x)$ exists for $\mu$-almost all $x$. The limit is in general not constant, but can be
written by the Kronecker factor $(Z, m, S)$, which is considered as a rotation on a compact abelian group $Z$.
Let $\pi$ be the factor map. Let
$$
   \tilde{f}_i = \mathbb{E}(f_i|Z).
$$
Then $\mu$-almost surely
$$
    M_{f_1, f_2}(x) = \int_Z \tilde{f}_1 (\pi(x) +z ) \tilde{f}_1 (\pi(x) + 2z ) dm(z).
$$
Then it is easy to deduce that $M_{f_1, f_2}(x)$ is $\mu$-almost surely constant if and only if
$$
   \forall \gamma \in \widehat{Z} \ \mbox{\rm with}\ \gamma \not=1, \widehat{\tilde{f}_1}(\gamma)
   \widehat{\tilde{f}_2}(\gamma^2) =0.
$$
This condition is extremely strong if $\mu$ is not weakly mixing. In other words, when $\ell
= 3$, it would be exceptional that
$E(\alpha)$ carries an ergodic measure which is not weakly mixing. When $\mu$ is mixing,
we have $M_{f_1, f_2}(x) = \int f_1 d\mu \int f_2 d\mu$ for $\mu$-almost all $x$.

For three or more functions, the existence of the almost everywhere limit  $M_{f_1, f_2, \cdots, f_\ell}$
is not yet proved. But the $L^2$-convergence is proved by Host and Kra \cite{HK}.
The limit can be written as
a similar integral, but the integral is taken over a nilmanifold of order $2$ \cite{BHK}.

Let us also remark that the supremum in the theorem is also equal to the dimension of the $\alpha$-level set of
$$
    \lim_{n\to \infty} \frac{1}{n^2}\sum_{1\le j, k\le n} f_1(T^j x) f_2(T^k x).
$$
See \cite{FLP}. Also see \cite{FSW_V}, where general $V$-statistics are  studied.

\section{Examples}

\bigskip

The motivation of the subject initiated in \cite{FLM} is the
following example. The Riesz product method used in \cite{FLM}
doesn't work for this case. However Theorem \ref{thm principal} does.

\begin{example}\label{11}
Let $q=2$, $m=2$, $\ell=2$ and $\varphi$ the potential given by $\varphi(x,y)=x_1y_1$ with $x=(x_i)_{i=1}^\infty ,y=(y_i)_{i=1}^\infty\in \Sigma_2$. So $$\left[\varphi(i,j)\right]_{(i,j)\in \{0,1\}^2}=
\left[ \begin{array}{cc}
0 & 0 \\
0 & 1
\end{array} \right].
$$
\end{example}

The system of equations (\ref{transer_equation}) in this case
becomes \begin{eqnarray*} \psi_s(0)^2 & = & \psi_s(0)+\ \ \psi_s(1),\\
\psi_s(1)^2& = & \psi_s(0)+e^s\psi_s(1).
\end{eqnarray*}
Fix $s\in\R$. By solving an fourth order algebraic equation, we get
the unique positive solution of the above system:
\begin{eqnarray*}
\psi_s(0) &= &\frac{1}{6}a(s)+\frac{2/3-2e^s}{a(s)}+\frac{2}{3},\\
\psi_s(1) &= &\psi_s(0)^2-\psi_s(0),
\end{eqnarray*}
where
$$ a(s)=\left(100-36
e^s+12\sqrt{69-54e^s-3e^{2s}-12e^{3s}}\right)^\frac{1}{3}.
$$
Recall that the pressure function is equal to
$$P_\varphi(s)=\log(\psi_s(0)+\psi_s(1)).$$

The minimal and maximal values of $\varphi$ are $0$ and $1$, which
are respectively attained  by the   sequences
$(x_j)_{j=0}^\infty=(0)^\infty$ and $(y_j)_{j=0}^\infty=(1)^\infty$
in the sense of
$$\varphi(x_j,x_{j+1})=0,\ \ \varphi(y_j,y_{j+1})=1,\ \ \forall j\geq 0.$$
Then by Theorem \ref{critere cercle}, we have
$$P'_\varphi(-\infty)=0,\ \ P'_\varphi(+\infty)=1.$$

Therefore, according to Theorem \ref{thm principal}, for any
$\alpha\in [0,1]$ we have
$$\dim_HE(\alpha)=\frac{-\alpha s_\alpha+P_\varphi(s_\alpha)}{2\log 2},$$
where $s_\alpha$ is the unique real such that $P'_\varphi(s_\alpha)=\alpha.$

We now consider the invariant spectrum of $E(\alpha)$.  As
$\varphi(x,y)=f(x)f(y)$ with $f(x)=x_1$,
 by Theorem
~\ref{mixing}, we have
$$F_{\rm inv}(\alpha)=\sup\left\{\frac{h_\mu}{\log 2}\ :\ \mu\in \mathcal{M}_{\rm inv}(\Sigma_2),\
\int x_1 d\mu=\sqrt{\alpha} \right\}.$$ It is well known (see
\cite{FFW}) that the right hand side, which is attained by a
Bernoulli measure, is equal to
$$H(\sqrt{\alpha})=-\sqrt{\alpha}\log_2 \sqrt{\alpha}-(1-\sqrt{\alpha})\log_2 (1-\sqrt{\alpha}).$$
So $$F_{\rm inv }(\alpha)\ =\ H(\sqrt{\alpha}).$$

See Figure \ref{figure 1}  for the graphs of the spectra
$\alpha\mapsto \dim_HE(\alpha)$ and $\alpha\mapsto L_{\rm
inv}(\alpha)$. We remark that, except at the extremal points
($\alpha=1/4$ or $1$), we have a strict inequality $F_{\rm
inv}(\alpha)<\dim_HE(\alpha)$. This shows that the invariant part of
$E(\alpha)$ is much smaller than $E(\alpha)$ itself. This is
different of the classical ergodic theory ($\ell=1$) where in
general we have $F_{\rm inv}(\alpha)=\dim_HE(\alpha)$ for all
$\alpha$ and actually $E(\alpha)$ is invariant.
\begin{figure}
\centering
\includegraphics[width=6.8cm]{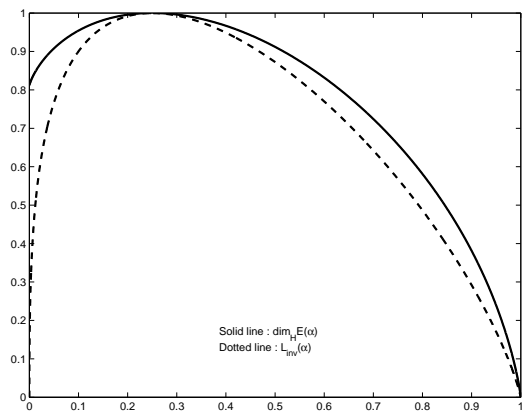}
\caption{The graphs of the spectrum $\alpha\mapsto \dim_HE(\alpha)$ and $\alpha\mapsto F_{\rm inv}(\alpha)$ (Example 1).}
\label{figure 1}
\end{figure}

The following example is a special case of a situation studied in \cite{FLM}. So, the
result is not new. Applying Theorem~\ref{thm principal} only
provides a second way to get it. But when we compare its invariant
spectrum with its multifractal spectrum we will discover a new
phenomenon--there is "no" invariant part in $E(\alpha)$ for some
$\alpha$.

\begin{example}
Let $q=2$, $m=2$, $\ell =2$ and $\varphi$ be the potential given by
$\varphi(x,y)=(2x_1-1)(2x_2-1)$. So
$$\left[\varphi(i,j)\right]_{(i,j)\in \{0,1\}^2}=
 \left[ \begin{array}{rr}
  1 & -1 \\
-1 &  1
\end{array} \right].
$$
\end{example}

The system of equations (\ref{transer_equation}) in this case
reduces to
\begin{eqnarray*}
\psi_s(0)^2&=&e^{s}\psi_s(0)+e^{-s}\psi_s(1),\\
\psi_s(1)^2&=&e^{-s}\psi_s(0)+e^{s}\psi_s(1).
\end{eqnarray*}
Because of the symmetry of $\varphi$, it is easy to find the  unique
positive solution of the  system:
$$\psi_s(0)\ =\ \psi_s(1)\ =\ e^{s}+e^{-s}.$$
Thus we get the pressure function
$$P_\varphi(s)=\log(\psi_s(0)+\psi_s(1))=\log 2+\log (e^{s}+e^{-s}).$$
It is evident that
$$P'_\varphi(s)\ =\  \frac{e^{s}-e^{-s}}{e^{s}+e^{-s}}.$$
and
$$P'_\varphi(-\infty)=-1,\ \quad P'_\varphi(+\infty)=1.$$
So, by Theorem \ref{thm principal}, we have $L_{\varphi}=[-1,1]$,
and for any $\alpha\in [-1,1]$ we have
$$\dim_HE(\alpha)=\frac{-\alpha s_\alpha+P_\varphi(s_\alpha)}{2\log 2},$$
where $s_\alpha$ is such that
$$ \frac{e^{s_\alpha}-e^{-s_\alpha}}{e^{s_\alpha}+e^{-s_\alpha}}=\alpha.$$

We now consider the invariant spectrum of $E(\alpha)$.  We have
$\varphi(x,y)=f(x)f(y)$ with $f(x)=2x_1-1$, then by Theorem
~\ref{mixing}, we have
$$F_{\rm inv}(\alpha)=\sup\left\{\frac{h_\mu}{\log 2}\ :\ \mu\in \mathcal{M}_{\rm inv}(\Sigma_2),\
\left(\int (2x_1-1) d\mu\right)^2=\alpha \right\}.$$ We see that we
must assume $\alpha\ge 0$. As $\int (2x_1-1) d\mu=2 \int x_1
d\mu-1$, the condition $\left(\int (2x_1-1) d\mu\right)^2=\alpha$
means $\int x_1 d\mu= \frac{1}{2}(1 \pm \sqrt{\alpha})$. The above
supremum is attained by a Bernoulli measure determined by the
probability vector $((1+\sqrt{\alpha})/2, (1- \sqrt{\alpha})/2)$.
In other word,
 $$F_{\rm inv }(\alpha)\ =\ H\left(\frac{1+\sqrt{\alpha}}{2}\right)$$
 where $H(x) = -x \log_2 x - (1-x) \log_2 (1-x)$.

See Figure \ref{figure 2} for the graphs of the spectra
$\alpha\mapsto \dim_HE(\alpha)$ and $\alpha\mapsto F_{\rm
inv}(\alpha)$. We see that, except at the extremal point $\alpha=0$,
we have $F_{\rm inv}(\alpha)<\dim_HE(\alpha)$. Moreover, for $-1
\leq \alpha<0$, we have $F_{\rm inv }(\alpha)=0$.  That is to say,
there is no invariant 
measure with positive dimension sitting on
$E(\alpha)$ for $-1\leq \alpha<0$. 
But $\dim_HE(\alpha)>0$.


\begin{figure}
\centering
\includegraphics[width=6.8cm]{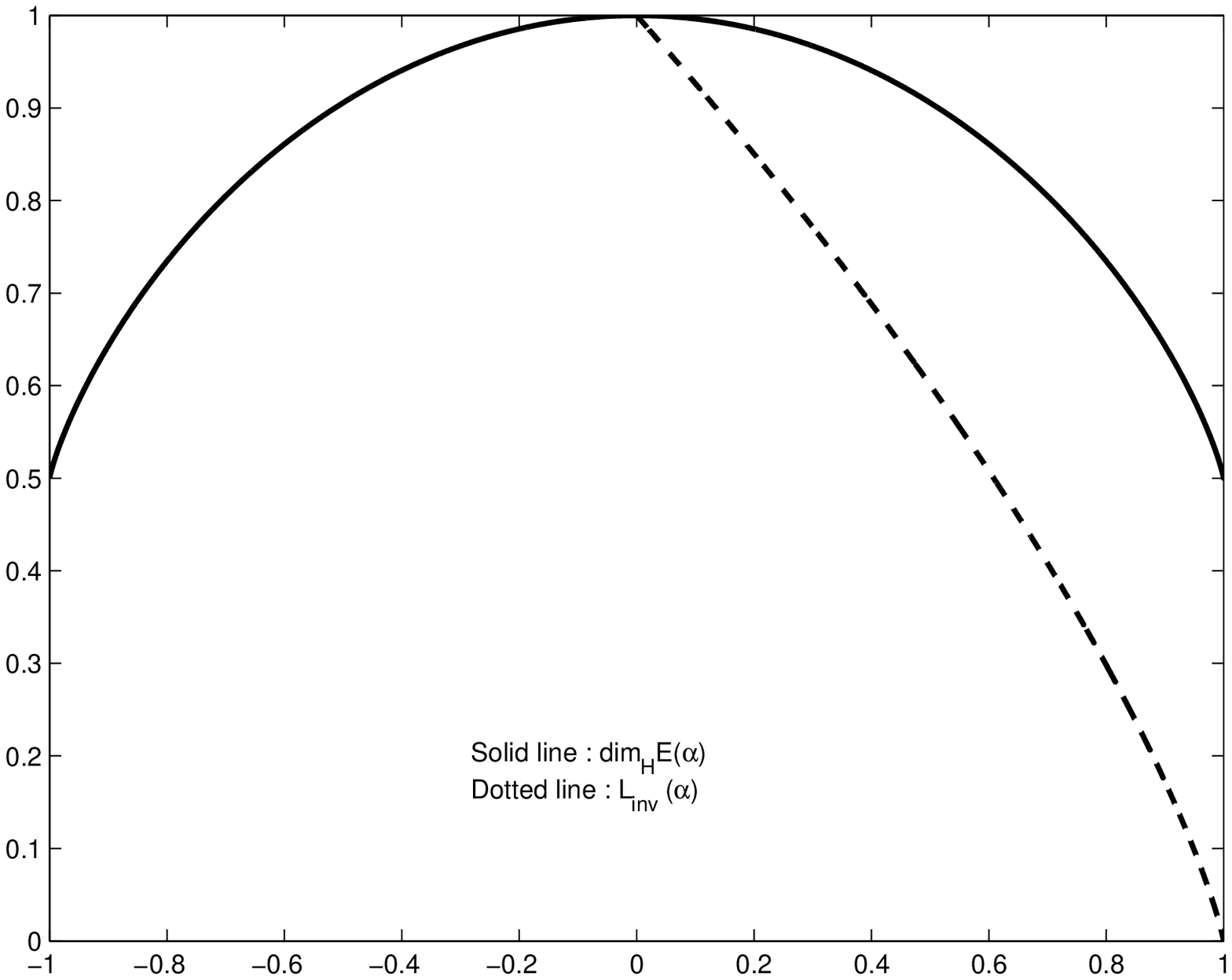}
\caption{The graph of the spectrum $\alpha\mapsto \dim_HE(\alpha)$ (Example 2).}
\label{figure 2}
\end{figure}

\medskip

The following example presents a case where the $L_\varphi$ is
strictly contained in the interval $[\alpha_{\min}, \alpha_{\max}]$.

\begin{example}
Let $q=2$, $m=2$, $\ell=2$ and $\varphi$ be  the potential given by
$\varphi(x,y)=y_1-x_1$. In other words,
$$\left[\varphi(i,j)\right]_{(i,j)\in \{0,1\}^2}= \left[
\begin{array}{cc}
0 & -1 \\
1 & 0
\end{array} \right].
$$
\end{example}

The system of equations (\ref{transer_equation}) in this case
reduces to \begin{eqnarray*} \psi_s(0)^2
&=&\ \ \ \ \psi_s(0)+e^s\psi_s(1),\\
 \psi_s(1)^2&=&e^{-s}\psi_s(0)+\ \ \psi_s(1). \end{eqnarray*}
It is easy to find the unique positive solution of the system:
$$\psi_s(0)=1+e^{\frac{s}{2}},\ \quad \psi_s(1)=1+e^{-\frac{s}{2}}.$$
The pressure function is then given by
$$P_\varphi(s)=\log(\psi_s(0)+\psi_s(1))=\log(2+e^{\frac{s}{2}}+e^{-\frac{s}{2}}).$$
So
$$P'_\varphi(s)= \frac{1}{2} \frac{e^{s/2}-e^{-s/2}}{2+e^{s/2}+e^{-s/2}},$$
and
$$P'_\varphi(-\infty)=-\frac{1}{2}, \ \quad P'_\varphi(+\infty)=\frac{1}{2}.$$
Remark that in this case we have
$$\alpha_{\min}<P'_\varphi(-\infty)<
P'_\varphi(+\infty)<\alpha_{\max}.
$$

By Theorem \ref{thm principal}, we have $L_{\varphi}=[-1/2,1/2]$,
and for any $\alpha\in [-1/2,1/2]$ we have
$$\dim_HE(\alpha)=\frac{-\alpha s_\alpha+P_\varphi(s_\alpha)}{2\log 2},$$
where $s_\alpha$ is the solution of
$$
\frac{e^{s_\alpha/2}-e^{-s_\alpha/2}}{2+e^{s_\alpha/2}+e^{-s_\alpha/2}}=2
\alpha.$$

We now consider the invariant spectrum of $E(\alpha)$.  We have
$\varphi(x,y)=f(y)-f(x)$ with $f(x)=x_1$. By Lebesgue convergence theorem, for any $\alpha\in \R$ such that there exists an invariant measure $\mu$ with $\mu(E(\alpha))=1$ we have
$$\alpha=\lim_{n\to \infty}\frac{1}{n}\sum_{k=1}^n\E_\mu(x_{2k}-x_k)=\lim_{n\to \infty}\frac{1}{n}\sum_{k=1}^n\left(\E_\mu(x_{2k})-\E_\mu(x_{k})\right)=0.$$ (The last equality is due to the invariance of $\mu$). This means that the only $\alpha$ such that there is an invariant measure with positive dimension sitting on $E(\alpha)$ is $\alpha =0$. The invariant spectrum then degenerates to one point. We have $F_{\rm inv}(0)=1$.

See Figure \ref{figure 3} for the graph of the spectrum
$\alpha\mapsto \dim_HE(\alpha)$.
\begin{figure}
\centering
\includegraphics[width=6.8cm]{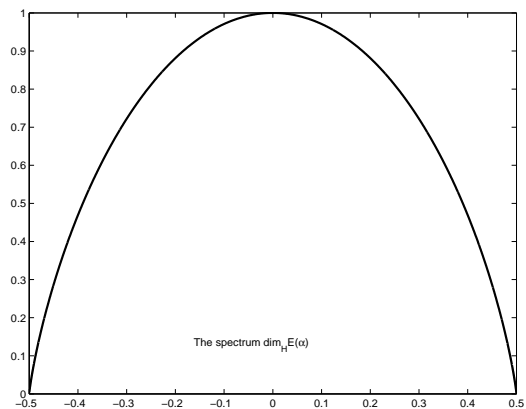}
\caption{The graph of the spectrum $\alpha\mapsto \dim_HE(\alpha)$
(Example 3).} \label{figure 3}
\end{figure}

\medskip

We can easily solve the system (\ref{transer_equation}) for a class
of symmetric functions described in the following example. The
example 2 is a special case.

\begin{example}
Let $\ell=2$, $q\geq 2$ and $m\geq2$. Let
$\varphi=\left[\varphi(i,j)\right]_{(i,j)\in \{0,\cdots,m-1\}^2}$ be
a  potential considered as a matrix. Suppose that each row of the
matrix  is a permutation of the first row.
\end{example}

Recall the system of equations (\ref{transer_equation}):
$$\psi_s(i)^q\ =\ \sum_{j=0}^{m-1}e^{s\varphi(i,j)}\psi_s(j),\ \ i\in \{0,\cdots,m-1\}.$$
It is straightforward to verify that the constant vector
$(a,\cdots,a)$, with
$$a=\left(\sum_{j=0}^{m-1}e^{s\varphi(1,j)}\right)^{\frac{1}{q-1}},$$
is the unique positive solution of the above system (see Theorem
\ref{existence-unicity trans-equ}). The pressure function is then
given by
$$P_\varphi(s) 
=\log \sum_{j=0}^{m-1}e^{s\varphi(1,j)}+(q-1)\log m.$$
We have
$$P'_\varphi(s)=\frac{\sum_{j=0}^{m-1}e^{s\varphi(1,j)}\varphi(1,j)}{\sum_{j=0}^{m-1}e^{s\varphi(1,j)}}.$$
Then
$$\lim_{s\to-\infty}P'_\varphi(s)=\lim_{s\to-\infty}\frac{\sum_{j=0}^{m-1}e^{s(\varphi(1,j)-\alpha_{\min})}\varphi(1,j)}{\sum_{j=0}^{m-1}e^{s(\varphi(1,j)-\alpha_{\min})}}=\alpha_{\min}=\min_j \varphi(1,j).$$
Similarly, we have
$$\lim_{s\to+\infty}P'_\varphi(s)=\alpha_{\max}=\max_j \varphi(1,j).$$
By  the  hypothesis of symmetry on $\varphi$, it is easy to see that
there exist sequences $(x_j)_{j=0}^\infty$ and $(y_j)_{j=0}^\infty
\in \Sigma_m$ such that
$$\varphi(x_j,x_{j+1})=\alpha_{\min},\ \quad
\varphi(y_j,y_{j+1})=\alpha_{\max},\ \forall j\geq 0.$$ Therefore,
by Theorem \ref{thm principal},
$L_{\varphi}=[\alpha_{\min},\alpha_{\max}]$, and for any $\alpha\in
[\alpha_{\min},\alpha_{\max}]$ we have
$$\dim_HE(\alpha)=\frac{-\alpha s_\alpha+P_\varphi(s_\alpha)}{2\log m},$$
where $s_\alpha$ is the solution of
$$\frac{\sum_{j=0}^{m-1}e^{s_\alpha\varphi(1,j)}\varphi(1,j)}{\sum_{j=0}^{m-1}e^{s_\alpha\varphi(1,j)}}=\alpha.$$

The invariant spectrum: For $\alpha \in [\alpha_{\min},
\alpha_{\max}]$, the invariant spectrum is attained by a Markov
measure. That is to say
$$
   F_{\rm inv}(\alpha) = \sup \left\{ -\sum_{0\le i, j\le m-1} \pi_i p_{i, j} \log_m
   p_{i, j}:
   \sum_{0\le i, j\le m-1} \varphi(i, j) \pi_i p_{i, j} = \alpha \right\}
$$
where $P=(p_{i, j})$ is a stochastic matrix and $\pi=(\pi_0,\cdots,
\pi_{m-1})$ is an invariant probability vector of $P$, i.e. $\pi
P=\pi$.

\bigskip

In the next example we show that in general the invariant spectrum can be strictly larger than the mixing spectrum for some level set $E(\alpha)$.

\begin{example}
Let $m\geq 2$. Consider two functions $f$ and $h$ on $\Sigma_m$ defined by
\[
f(i)=\begin{cases} 1 & 0\le i <m-1\\ 2 & i=m-1 \end{cases} \qquad h(i)=\begin{cases} -2 & 0\le i <m-1\\ 1 & i=m-1 \end{cases}.
\]
Consider the level set
\[
E(0)=\left\{x\in\Sigma_m\, :\, \lim_{n\to\infty}\frac1n\sum_{k=1}^nf(x_k)f(x_{2k})h(x_{3k})=0\right\}.
\]
(That means $\phi(x,y,z)=f(x)f(y)h(z)$).
We claim that $F_{\rm mix}(0)<F_{\rm inv}(0)$ for $m\geq 49$.
\end{example}

Let $\delta_j$ denotes the Dirac measure at $j\in \{0,1,\cdots,m-1\}$. Let
 $$\nu=\frac{1}{m-1}\sum_{j=0}^{m-2}\delta_j.$$ We note that $\nu$ restricted on $\Sigma_{m-1}$ gives rise to the measure of maximal dimension on $\Sigma_{m-1}$. We consider a probability measure on $\Sigma_m$ defined by $$\mu=\frac{1}{2}\mu_1+\frac{1}{2}\mu_2,$$ where
 $$\mu_1([x_1x_2\cdots x_n])=\prod_{k=0}^{\lfloor\frac{n-1}2\rfloor}\delta_{m-1}(x_{2k+1})\cdot\prod_{k=1}^{\lfloor\frac{n}2\rfloor}\nu(x_{2k}),$$
 and
 $$\mu_2([x_1x_2\cdots x_n])=\prod_{k=0}^{\lfloor\frac{n-1}2\rfloor}\nu(x_{2k+1})\cdot\prod_{k=1}^{\lfloor\frac{n}2\rfloor}\delta_{m-1}(x_{2k}).$$

Note that $T^{-1}\circ \mu_1=\mu_2$ and $T^{-1}\circ \mu_2=\mu_1$. So $\mu$ is shift invariant.
The measure $\mu$ sits on the  set $A=A_1\bigcup A_2$ where
\[
A_1=\left\{x\in\Sigma_m\, :\, x_{2k+1}=m-1,\, x_{2k}\ne m-1,\, k\in\N\right\},
\]
\[
A_2=\left\{x\in\Sigma_m\, :\, x_{2k}=m-1,\, x_{2k+1}\ne m-1,\, k\in\N\right\}.
\]
Actually $\mu_1(A_1)=1$ and $\mu_2(A_2)=1$ and the sets $A_1$ and $A_2$ are disjoint.

We claim that $\mu$ is ergodic but not mixing. To see that $\mu$ is not mixing, we only need to observe that $T^{-1}A_1=A_2$ and $T^{-1}A_2=A_1$. From this and that $A_1$ and $A_2$ are disjoint we deduce that
$$\mu\left(T^{-2k}A_1\cap A_2\right)=0,\ \ \forall k\in \N. $$
This implies that $\mu$ is not mixing. The ergodicity of $\mu$ with respect to $T$ is due to the fact that $\mu_1$ and $\mu_2$ are ergodic with respect to $T^2=T\circ T$ and that they are  supported
by disjoint sets.

For every  $x\in A_1$ we have
$$
\lim_{n\to\infty}\frac1n\sum_{k=1}^nf(x_k)f(x_{2k})h(x_{3k})=\lim_{n\to\infty}\frac1n\left(\sum_{k-{\rm even}}+\sum_{k-{\rm odd}}\right)=\frac12\left(-2+2\right)=0
$$
and for every $x\in A_2$ we have
$$
\lim_{n\to\infty}\frac1n\sum_{k=1}^nf(x_k)f(x_{2k})h(x_{3k})=\lim_{n\to\infty}\frac1n\left(\sum_{k-{\rm even}}+\sum_{k-{\rm odd}}\right)=\frac12\left(4-4\right)=0.
$$
Hence, $\mu(E(\alpha)=1$. We note that
\[
\int_{\Sigma_m}f\, d\mu\cdot\int_{\Sigma_m}f\, d\mu\cdot\int_{\Sigma_m}h\, d\mu=\left(\frac{3}{2}\right)^2\cdot\left(-\frac{1}{2}\right)=-\frac98<0.
\]

Let us compute the dimension of $\mu$ by computing the local entropy at typical points. If $x\in A$ then
\[
\mu([x_1\cdots x_{2n}])=(m-1)^{-n}.
\]
Since $\mu(A)=1$ this implies that $\dim_H\mu=\frac12\log(m-1)$. So that $F_{\rm inv}(0)\geq \frac{1}{2}\log (m-1)$.
On the other hand, by Theorem \ref{mixing} and Remark \ref{remark mixing}, we have
\[
F_{\rm mix}(0)=\sup\left\{h_\mu \, :\, \mu-\text{multiple\ mixing, }\, \int_{\Sigma_m}h\, d\mu=0\right\}
\]
since $f$ is strictly positive. From standard multifractal analysis we know that the supremum is attained by a Bernoulli measure and
\begin{align*}
F_{\rm mix}(0)&=\max_{p_i\ge 0}\left\{-\sum_{i=0}^{m-1}p_i\log p_i\, :\, p_0+\cdots +p_{m-2}=\frac13, p_{m-1}=\frac23\right\}\\&
=\frac13\log(m-1)+\frac13\log3+\frac23\log\frac32.
\end{align*}
If $m>48$ we conclude $F_{\rm inv}(0)>F_{\rm mix}(0)$.

\medskip

\section{Remarks and Problems}

{\em Multiplicatively invariant sets.} The first basic example ({\bf Example 1} above) which motivated our study leads to the set
$$
X_2 = \{(x_k)_{k\ge 1} \in \Sigma_2: \forall k\ge 1,  x_k x_{2k} = 0 \}
$$
which was introduced in \cite{FLM}. It is known to F\"{u}rstenberg \cite{Furstenberg0} that any shift-invariant closed set
has its  Hausdorff dimension equal to its Minkowski (box-counting) dimension. Unfortunately the closed set $X_2$ is not shift-invariant.
Its Minkowski dimension was computed by Fan, Liao and Ma \cite{FLM} and its Hausdorff dimension was computed by Kenyon, Peres and Solomyak \cite{KPS}.
The results show that the Hausdorff dimension is smaller than the Minkowski dimension.
Recall that
$$
  \dim_M X_2 = 0.82429..., \quad \dim_H X_2 = 0.81137...
$$
As observed by Kenyon, Peres and Solomyak,
the set $X_2$ is invariant under the action of the semigroup $\mathbb{N}$ in the sense that $T_r X_2 \subset X_2$ for all $r\in \mathbb{N}$
where $T_r$ is defined by
$$
     x=(x_k)_{k\ge 1} \mapsto T_r x = (x_{rk})_{k\ge 1}.
$$
As observed by Fan, Liao and Ma, we have the decomposition
$$
     \mathbb{N} = \bigsqcup_{i: \rm odd} i\Lambda
$$
where $\Lambda =\{1, 2, 2^2, 2^3, \cdots\}$ is the (multiplicative) sub-semigroup generated by $2$. This is one of the key point
in the present study. A similar decomposition holds for semigroups generated by a finite number of prime numbers. Using this decomposition,
Peres, Schmeling, Solomyak and Seuret \cite{PSSS} computed the Hausdorff dimension and the Minkowski dimension of sets like
$$
X_{2,3} = \{(x_k)_{k\ge 1} \in \Sigma_2: \forall k\ge 1,  x_k x_{2k} x_{3k}= 0 \}.
$$
This is an important step.

{\em A generalization.}
Combining the ideas in \cite{PSSS} and those in the present paper, we can study the following limit
$$
     \lim_{n \to \infty} \frac{1}{n} \sum_{k=1}^n \varphi(x_k, x_{2k}, x_{3k}).
$$
See \cite{Wu}. Notice that the computation in this case are more involved. Also notice that, by chance, the Riesz product method
used in \cite{FLM} is well adapted to the study of the special limit
$$
     \lim_{n \to \infty} \frac{1}{n} \sum_{k=1}^n (2x_k-1) (2 x_{2k} -1)\cdots (2 x_{\ell k}-1)
$$
where $\ell \ge 2$ is any integer.

{\em Vector valued potential.} We indicate here how to extend our results to vector valued potentials. First, let $\varphi, \gamma$ be $2$ functions   defined on $S^\ell$ taking real values. Instead of considering the transfer operator $\mathcal{L}_s$ as defined in (\ref{transer-operator}), we consider the following one.
$$
\mathcal{L}_s \psi (a)
= \sum_{j \in S} e^{s \varphi(a, j)+\gamma(a, j)}
\psi (Ta, j),\ a\in S^{\ell-1},\ s\in \R.
$$
Still by Theorem \ref{existence-unicity trans-equ}, there exists a unique solution to the equation
$$(\mathcal{L}_s \psi)^{\frac{1}{q}}=\psi.$$
Then, we can similarly define the pressure function as indicated in (\ref{transer_equation 2}) and (\ref{pressure function}). We denote this pressure function by $P_{\varphi,\gamma}(s)$.
The arguments with which we proved the analyticity and convexity of $s\mapsto\pv(s)$ can  be also used to prove the same results for $s\mapsto P_{\varphi,\gamma}(s)$.

Let $\underline{\varphi}=(\varphi_1,\cdots,\varphi_d)$ be a function defined on $S^\ell$ taking values in $\R^d$. For $\underline{s}=(s_1,\cdots,s_d)\in \R^d$, we consider the following transfer operator.
$$
\mathcal{L}_{\underline{s}} \psi (a)
= \sum_{j \in S} e^{\langle \underline{s},\underline{\varphi}\rangle}
\psi (Ta, j),\ a\in S^{\ell-1},
$$
where  $\langle \cdot,\cdot\rangle$ denotes the scalar product in $\R^d$. We denote the associated pressure function by $P(\underline{\varphi})(\underline{s})$.
Then, by the above discussion, for any vectors $u,v\in \R^d$ the function
$$\R \ni s\ \longmapsto \ P(\underline{\varphi})(us+v)$$ is analytical and convex. We deduce from this that the function $$\underline{s}\ \longmapsto \ P(\underline{\varphi})(\underline{s})$$ is infinitely differentiable and convex on $\R^d$. We can prove that $P(\underline{\varphi})(\underline{s})$ is indeed analytical by the same argument used to prove the analyticity of $P_{\varphi}(s)$.

Similarly, we define  the level sets $E(\underline{\alpha})$ $(\underline{\alpha}\in \R^d)$ of $\underline{\varphi}$. A vector version of Theorem \ref{thm principal} is stated by just replacing the  derivative of the pressure function by gradient.

\bigskip
We finish the paper with two problems.
\bigskip

{\em Subshifts of finite type.} Our study is strictly restricted to the full shift dynamics. It is a challenging problem
to study the dynamics of subshift of finite type.

More general are dynamics with Markov property. More efforts are needed to deal with $\beta$-shift which are not Markovian.  New ideas are needed
to deal with these dynamics.

\bigskip
{\em Nonlinear cookie cutter.} The full shift is essentially the doubling dynamics $Tx = 2 x$ $\mod 1$ on the interval $[0,1)$.
Cookie cutters are the first interval maps coming into the mind after the doubling map. If the cookie cutter maps are not linear, it is a difficult problem.

Based on the computation made in \cite{PS}, Liao and Rams \cite{LR} considered a special piecewise linear map of two branches defined on two intervals
$I_0$ and $I_1$ and studied the following  limit
$$
          \lim_{n \to \infty} \frac{1}{n} \sum_{k=1}^n 1_{I_1}(T^kx)1_{I_1}(T^{2k}x).
$$
The techniques presented in the present paper can be used to treat the problem
for general piecewise linear cookie cutter dynamics  \cite{FLW,Wu}.

\end{document}